\newtheorem{theorem}{Theorem}[section]
\newtheorem{lemma}[theorem]{Lemma}
\theoremstyle{definition}
\newtheorem{definition}[theorem]{Definition}
\newtheorem{remark}{Remark}[section]
\numberwithin{equation}{section}
\title[HEAT CONDUCTING MHD EQUATIONS]{Strong solutions for three-dimensional nonhomogeneous incompressible Heat Conducting Magnetohydrodynamic Equations with vacuum}
\date{}
\email{\color{blue} lhymaths@zzu.edu.cn}
\begin{document}

\maketitle

\centerline{\scshape Huanyuan Li \footnote{This work was supported by National Natural Science Foundation of China (Grant No. 12001495).}}
\medskip
{\footnotesize
 \centerline{School of Mathematics and Statistics, Zhengzhou University}

} 

\begin{abstract}
This paper is concerned with a Cauchy problem for the three-dimensional (3D) nonhomogeneous incompressible heat conducting magnetohydrodynamic (MHD) equations  in the whole space. First of all, we establish a weak Serrin-type blowup criterion for the strong solutions. It is shown that for the Cauchy problem of the 3D nonhomogeneous heat conducting MHD equations, the strong solution exists globally if the velocity satisfies the weak Serrin's condition. In particular, this criterion is independent of the absolute temperature and magnetic field. Then as an immediate application, we prove the global existence and uniqueness of strong solution to the 3D nonhomogeneous heat conducting MHD equations under some smallness condition on the initial data. In addition, the initial vacuum is allowed.

\vspace{2mm}

\noindent{\bf{Keywords:}} Heat conducting MHD; Cauchy problem; Blowup criterion; Global strong solutions; Vacuum
\end{abstract}

\section{Introduction and main results}

The time evolution of a three-dimensional nonhomogeneous incompressible and heat conducting magnetohydrodynamic (MHD for short) fluid is governed by the following nonhomogeneous heat conducting MHD system:
\begin{equation} \label{MHD}
\left\{
\begin{aligned}
&\partial_t\rho + \mathrm{div}(\rho \mathbf{u}) = 0, \\
&\partial_t(\rho \mathbf{u}) + \mathrm{div} (\rho \mathbf{u}\otimes \mathbf{u}) - \mathrm{div}(2\mu\mathfrak{D}(\mathbf{u})) +\nabla P = (\mathbf{H}\cdot \nabla) \mathbf{H}, \\
& c_v [\partial_t (\rho \theta) + \mathrm{div} (\rho \mathbf{u} \theta)]-\kappa \Delta \theta = 2\mu |\mathfrak{D} (\mathbf{u})|^2 + \nu |\nabla \times \mathbf{H}|^2, \\
& \partial_t \mathbf{H} + (\mathbf{u} \cdot \nabla) \mathbf{H} - (\mathbf{H} \cdot \nabla) \mathbf{u} = \nu \Delta \mathbf{H}, \\
&\mathrm{div} \mathbf{u}= \mathrm{div} \mathbf{H} = 0,
\end{aligned}
\right.
\end{equation}
where $t \ge 0$ stands for the time and $x \in \mathbb{R}^3$ for the spatial coordinate. $\rho,\mathbf{u}=(u^1, u^2, u^3), P,$ $\theta$ and $\mathbf{H}=(H^1, H^2, H^3)$ denote the fluid density, velocity, pressure, absolute temperature and magnetic field, respectively. The positive constants $\mu, c_v, \kappa$ and $\nu$ are the viscosity coefficient, heat capacity, heat conductivity coefficient and magnetic diffusive coefficient, respectively.
$$ \mathfrak{D}(\mathbf{u}) = \frac{1}{2} \Big[ \nabla \mathbf{u} + (\nabla \mathbf{u})^T \Big] $$
is the deformation tensor, where $\nabla \mathbf{u}$ is the gradient matrix $\left(\partial u^i / \partial x_j \right)_{ij}$ and $(\nabla \mathbf{u})^T$ is its transpose, and $\nabla \times \mathbf{H}$ is the curl of the magnetic field $\mathbf{H}$.

In this paper, we will study the Cauchy problem of the Eqs. \eqref{MHD} with the initial conditions
\begin{equation} \label{initial}
(\rho,\mathbf{u}, \mathbf{H}, \theta)(x, 0) = (\rho_0, \mathbf{u}_0, \mathbf{H}_0, \theta_0)(x), \quad \quad x \in \mathbb{R}^3,
\end{equation}
and the far field behavior conditions (in some weak sense)
\begin{equation} \label{boundary}
(\rho, \mathbf{u}, \mathbf{H}, \theta) (x, t) \rightarrow (0, \mathbf{0}, \mathbf{0},0), \quad \quad \text{as} \quad |x| \rightarrow + \infty.
\end{equation}

Magnetohydrodynamics is the study of the interaction between magnetic field and moving conducting fluids. It is one of the important macroscopic fluid models, usually arising in science and engineering with a wide range of applications. Examples of such a magneto-fluids include hot ionised gases (plasmas), liquid metals or strong electrolytes. Because of the physical description of magneto-fluids dynamics, the nonhomogeneous incompressible MHD system \eqref{MHD} is a combination of the nonhomogeneous Navier-Stokes equations of fluid mechanics and the Maxwell equations of electromagnetism. The concept behind MHD is that the magnetic field can induce currents in a moving conducting fluid, which in turn polarizes the fluid and changes the magnetic field itself. One of the important issues is to understand the nature of this coupling between fluids and magnetic fields. We refer to \cite{David} for more background and applications of MHD.

The mathematical studies on the nonhomogeneous incompressible fluids attract a lot of attentions due to their physical importance, mathematical challenge and widespread applications. Let us briefly give a short survey on the works of nonhomogeneous fluids which are related to our results in this paper.

When we don't take account of the equation $\eqref{MHD}_3$ for temperature, $\eqref{MHD}$ reduces to the nonhomogeneous incompressible MHD equations. For this system, when the initial density has a positive lower bound, Gerbeau, Le Bris \cite{GLebris} and Desjardins, Le Bris \cite{DLebris} studied the global existence of weak solutions of finite energy in the whole space or in the torus respectively. Chen et. al. \cite{ChenL} proved a global solution for the initial data belonging to critical Besov spaces. See also \cite{Bie} for related improvement. Besides, Chen et. al. \cite{ChenG} showed global well-posedness to the 3D Cauchy problem for discontinuous initial density. On the other hand, in the presence of vacuum, Chen et. al. \cite{ChenT} obtained the local existence of strong solutions to the 3D Cauchy problem under some compatibility condition on the initial data. With the help of a Sobolev inequality of logarithmic type, Huang and Wang \cite{HuangW} showed the global existence of strong solution for general initial data. 

When we study the motion in the absence of magnetic field, namely, $\mathbf{H} \equiv 0$, $\eqref{MHD}$ reduces to the nonhomogeneous heat conducting Navier-Stokes equations. Under compatibility conditions for the initial data, Zhong \cite{Zhong2} showed a Serrin-type blow-up criterion and proved global strong solutions with vacuum for small initial data, which extended the local result obtained by Cho and Kim \cite{ChoK} to a global one. By employing certain time-weighted a priori estimates, they showed that strong solutions exist globally provided that some smallness condition holds true. Meanwhile, Wang et al. \cite{WangYu} studied a three-dimensional initial boundary value problem with the general external force and obtained global existence of strong solutions under the assumption that the initial density is suitably small. Very recently, combining delicate energy estimates and a logarithmic interpolation inequality, the author established the global existence and uniqueness of strong solutions to the 2D Cauchy problem with large initial data and non-vacuum density at infinity.

Let us go back to the heat conducting MHD system \eqref{MHD}. The local existence of a unique strong solution to the system \eqref{MHD} with vacuum under some compatibility conditions was proved by Wu \cite{Wu}. For 3D initial and boundary value problem, Zhong \cite{Zhong5} obtained global solution under some smallness conditions on the initial data, while Zhou \cite{Zhou} established a Serrin-type blowup criterion involving only the velocity field. Later, Zhu and Ou \cite{ZhuO} extended the corresponding result in \cite{Zhong5} to the case of density-temperature-dependent viscosity. However, the global well-posedness of system \eqref{MHD} in the unbounded domains is still unknown. In fact, this is the main purpose of this paper. 

Before stating our main results, we first explain the notations and conventions used throughout this paper. We denote by 
$$\int \cdot dx = \int_{\mathbb{R}^3} \cdot dx.$$
And for $1 \le r \le \infty$ and $ k \in \mathbb{N}$, the homogeneous and inhomogeneous Sobolev spaces are defined in a standard way,

\begin{equation*}
\left\{
\begin{aligned}
& L^r = L^r (\mathbb{R}^3), \quad W^{k,r} = W^{k, r}(\mathbb{R}^3), \quad H^k = W^{k,2}, \\
& D^{k,r} = \{ f \in L^1_{\text{loc}} : \|\nabla^k f\|_{L^r} <\infty\}, \quad D^k = D^{k,2}\\
& D^1_0 = \{f \in L^6(\mathbb{R}^3): \|\nabla u\|_{L^2} < \infty\}.
\end{aligned}
\right.
\end{equation*}

Now we give the definition of strong solutions to the Cauchy problem \eqref{MHD}-\eqref{boundary} as follows.

\begin{definition}[Strong solutions] \label{def}
A pair of functions $(\rho \ge 0, \mathbf{u}, \mathbf{H}, \theta \ge 0)$ is called a strong solution to the Cauchy problem \eqref{MHD}-\eqref{boundary} in $\mathbb{R}^3 \times (0,T)$, if for some $q_0 \in (3, \infty)$, 
\begin{equation} \label{regularity}
\left\{
\begin{aligned}
&\rho \in C([0,T];H^1 \cap W^{1,q_0}), \quad  (\mathbf{u}, \mathbf{H},\theta) \in C([0,T]; D^1_0 \cap D^2) \cap L^2(0,T; D^{2,q_0}), \\
&\rho_t \in C([0,T];L^{q_0}), \quad  (\sqrt{\rho}\mathbf{u}_t, \mathbf{H}_t, \sqrt{\rho}\theta_t) \in L^{\infty}(0, T; L^2), \quad (\mathbf{u}_t, \mathbf{H}_t, \theta_t) \in L^2(0,T; D^1_0),\\
\end{aligned}
\right.
\end{equation}
and $(\rho, \mathbf{u}, \mathbf{H}, \theta)$ satisfies  \eqref{MHD} almost everywhere in $\mathbb{R}^3 \times (0,T)$.
\end{definition}

Our main results read as follows:
\begin{theorem} \label{thm1}
For constant $q \in (3, 6]$, assume that the initial data $(\rho_0 \ge 0, \mathbf{u}_0, \mathbf{H}_0, \theta_0 \ge 0)$ satisfies
\begin{equation} \label{RC}
\rho \in L^1 \cap H^1 \cap W^{1, q},~~(\mathbf{u}_0, \mathbf{H}_0, \theta_0) \in D_0^1 \cap D_2, ~~\mathrm{div}\mathbf{u}_0 = \mathrm{div}\mathbf{H}_0 = 0,
\end{equation}
and the compatibility conditions
\begin{equation} \label{CC1}
-\mathrm{div}(2\mu\mathfrak{D}(\mathbf{u}_0)) -\mathbf{H}_0 \cdot \nabla \mathbf{H}_0 + \nabla P_0= \sqrt{\rho_0} \mathbf{g}_1,
\end{equation}
and
\begin{equation} \label{CC2}
\kappa \Delta \theta_0 + 2\mu |\mathfrak{D}(\mathbf{u}_0)|^2 + \nu|\nabla \times \mathbf{H}_0|^2 = \sqrt{\rho_0} \mathbf{g}_2,
\end{equation}
for some $P_0 \in D^1$, and $\mathbf{g}_1, \mathbf{g}_2 \in L^2.$ Let $(\rho, \mathbf{u}, \mathbf{H}, \theta)$ be a strong solution in $\mathbb{R}^3 \times (0, T^*)$ as described in  Definition \ref{def}. If $T^* < \infty$ is the maximal existence time, then
\begin{equation} \label{bloweq}
\lim\limits_{T \to T^*} \|\mathbf{u}\|_{L^s(0, T; L^r_{\omega})} = \infty
\end{equation}
for any $r$ and $s$ satisfying
\begin{equation} \label{index}
\frac{2}{s} + \frac{3}{r} \le 1, \quad 3 < r \le \infty,
\end{equation}
where $L^r_{\omega}$ denotes the weak-$L^r$ space.
\end{theorem}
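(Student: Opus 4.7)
The plan is to argue by contradiction. Assume
\[
M_0 := \lim_{T \to T^*}\|\mathbf{u}\|_{L^s(0,T; L^r_\omega)} < \infty,
\]
and derive a priori bounds on $[0,T^*)$ for every norm in the regularity class \eqref{regularity}, depending only on $M_0$, $T^*$ and the initial data. Once this is done, the local existence theorem of Wu \cite{Wu}, reapplied at a time close to $T^*$, extends the strong solution past $T^*$ and contradicts maximality, so the whole task is an a priori estimate.

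The first layer of estimates is free: transport of mass yields $\|\rho(t)\|_{L^1\cap L^\infty}\le\|\rho_0\|_{L^1\cap L^\infty}$, while testing $\eqref{MHD}_2$ against $\mathbf{u}$ and $\eqref{MHD}_4$ against $\mathbf{H}$ and adding cancels the Lorentz--force couplings and produces uniform bounds for $\sqrt{\rho}\,\mathbf{u}$, $\mathbf{H}$ in $L^\infty_tL^2$ and for $\nabla\mathbf{u}$, $\nabla\mathbf{H}$ in $L^2_tL^2$. Integrating $\eqref{MHD}_3$ then gives the $L^1$ bound on $\rho\theta$.

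The heart of the argument is the first-order estimate. I would test $\eqref{MHD}_2$ by $\mathbf{u}_t$ and $\eqref{MHD}_4$ by $\mathbf{H}_t$, add, and treat the mixed terms $\int(\mathbf{H}\cdot\nabla)\mathbf{H}\cdot\mathbf{u}_t\,dx$ and $\int[(\mathbf{H}\cdot\nabla)\mathbf{u}-(\mathbf{u}\cdot\nabla)\mathbf{H}]\cdot\mathbf{H}_t\,dx$ by the now-standard MHD integration-by-parts device that turns their sum into a total time derivative plus controllable lower-order terms (as in \cite{Zhou,Zhong5}). What is genuinely new is the convective contribution, which I would dominate via H\"older in Lorentz spaces and the Sobolev embedding $H^1\hookrightarrow L^{6,2}$ by
\[
\int \rho\,|\mathbf{u}\cdot\nabla\mathbf{u}|^{2}\,dx
\le C\,\|\mathbf{u}\|_{L^r_\omega}^{2}\,\|\nabla\mathbf{u}\|_{L^2}^{2-6/r}\,\|\nabla^2\mathbf{u}\|_{L^2}^{6/r}
\le \varepsilon\,\|\nabla^2\mathbf{u}\|_{L^2}^{2}+C\,\|\mathbf{u}\|_{L^r_\omega}^{s}\,\|\nabla\mathbf{u}\|_{L^2}^{2},
\]
where the Young step uses the Serrin scaling $2/s+3/r\le 1$. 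Combined with the Stokes elliptic estimate
\[
\|\nabla^2\mathbf{u}\|_{L^2}+\|\nabla P\|_{L^2}
\lesssim \|\rho\mathbf{u}_t\|_{L^2}+\|\rho\mathbf{u}\cdot\nabla\mathbf{u}\|_{L^2}+\|(\mathbf{H}\cdot\nabla)\mathbf{H}\|_{L^2},
\]
and the parabolic analogue for $\mathbf{H}$, Gronwall controls $\|\nabla\mathbf{u}\|_{L^\infty_tL^2}$, $\|\nabla\mathbf{H}\|_{L^\infty_tL^2}$, $\|\sqrt\rho\,\mathbf{u}_t\|_{L^2_tL^2}$ and $\|\mathbf{H}_t\|_{L^2_tL^2}$; this is also precisely why the resulting criterion can be stated in terms of $\mathbf{u}$ alone, with no role for $\theta$ or $\mathbf{H}$.

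The remaining bounds are a bootstrap. Once $\mathbf{u},\mathbf{H}\in L^\infty_tH^1\cap L^2_tH^2$, the source $2\mu|\mathfrak{D}(\mathbf{u})|^2+\nu|\nabla\times\mathbf{H}|^2$ lies in $L^2_tL^2$, so testing $\eqref{MHD}_3$ by $\theta$ and then by $\theta_t$ yields the full second-order regularity of $\theta$. Differentiating the continuity equation and invoking a Brezis--Wainger type logarithmic Sobolev inequality to bound $\|\nabla\mathbf{u}\|_{L^\infty}$ by $\|\nabla^2\mathbf{u}\|_{L^q}$ closes the estimates for $\|\rho\|_{H^1\cap W^{1,q}}$ and $\|\rho_t\|_{L^{q_0}}$. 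The main obstacle is clearly the first-order estimate: the Lorentz-space interpolation together with the delicate cancellation between the momentum and induction equations is what makes $2/s+3/r\le 1$ appear as the critical scaling; every other step is essentially routine once those ingredients are secured.
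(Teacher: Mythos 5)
Your skeleton matches the paper's: contradiction via the local existence theorem, the free transport/energy layer, a first-order estimate obtained by testing with $\mathbf{u}_t$ and $\mathbf{H}_t$ combined with the Stokes elliptic estimate, the weak-Lebesgue H\"older/interpolation bound
$\|\mathbf{u}\cdot\nabla \mathbf{u}\|_{L^2}^2\le \varepsilon\|\nabla^2\mathbf{u}\|_{L^2}^2+C\|\mathbf{u}\|_{L^r_\omega}^{s}\|\nabla\mathbf{u}\|_{L^2}^2$ (this is exactly the paper's Lemma \ref{weaksobolev}), and a bootstrap for $\theta$ and $\rho$. However, there is a genuine gap at the step you describe as ``controllable lower-order terms.'' After the integration by parts on $\int(\mathbf{H}\cdot\nabla)\mathbf{H}\cdot\mathbf{u}_t\,dx$ and after the elliptic estimates, one is left with terms of the type $\|\mathbf{H}\cdot\nabla\mathbf{u}\|_{L^2}^2$, $\|\mathbf{H}\cdot\nabla\mathbf{H}\|_{L^2}^2$, and the quantity $\int(\mathbf{H}\cdot\nabla)\mathbf{u}\cdot\mathbf{H}\,dx$ that must be absorbed into the left-hand side. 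Your Lorentz-space device only applies when the \emph{velocity} is the transporting field, because the blowup hypothesis gives no bound on $\mathbf{H}$ in any $L^r_\omega$ with $r>3$; using only the energy bound one gets, e.g.,
\begin{equation*}
\|\mathbf{H}\cdot\nabla\mathbf{u}\|_{L^2}^2\le \|\mathbf{H}\|_{L^6}^2\|\nabla\mathbf{u}\|_{L^2}\|\nabla^2\mathbf{u}\|_{L^2}
\le \varepsilon\|\nabla^2\mathbf{u}\|_{L^2}^2+C\|\nabla\mathbf{H}\|_{L^2}^4\|\nabla\mathbf{u}\|_{L^2}^2,
\end{equation*}
which turns the Gronwall inequality for $y(t)=\|\nabla\mathbf{u}\|_{L^2}^2+\|\nabla\mathbf{H}\|_{L^2}^2$ into a superlinear one, $y'\lesssim y^3$, that does not close without smallness.

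The paper's resolution, which your proposal omits entirely, is a dedicated He--Xin type estimate (Lemma \ref{hx}): multiplying the induction equation by $q|\mathbf{H}|^{q-2}\mathbf{H}$ and applying the same weak-Lebesgue inequality to the resulting term $\int|\mathbf{u}|^2|\mathbf{H}|^q\,dx$ yields $\sup_{0\le t\le T}\|\mathbf{H}\|_{L^q}\le C$ for $q\in[2,12]$ using only the hypothesis on $\mathbf{u}$. With $\|\mathbf{H}\|_{L^6}$ and $\|\mathbf{H}\|_{L^4}$ bounded a priori, the magnetic nonlinearities become genuinely linear in $\|\nabla\mathbf{u}\|_{L^2}^2+\|\nabla\mathbf{H}\|_{L^2}^2$ after Cauchy--Schwarz, and Gronwall closes. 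This lemma is the reason the criterion can avoid any condition on $\mathbf{H}$, and it is the ingredient your first-order estimate is missing; the author flags it in the introduction as the key new point relative to the Navier--Stokes case. (A minor further remark: no Brezis--Wainger inequality is needed in the bootstrap --- once $\nabla^2\mathbf{u}\in L^2(0,T;L^6)$ is available, $\|\nabla\mathbf{u}\|_{L^\infty}\lesssim\|\nabla\mathbf{u}\|_{L^6}+\|\nabla^2\mathbf{u}\|_{L^6}$ suffices to propagate $\|\nabla\rho\|_{L^q}$.)
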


\begin{remark}
The local existence of unique strong solution to \eqref{MHD}-\eqref{boundary} with the initial data described in Theorem \ref{thm1} can be established in a similar way as \cite{ChoK} (see also \cite{Wu}). Hence, the maximal time $T^*$ is well-defined.
\end{remark}

\begin{remark}
It should be pointed out that the blowup criterion \eqref{bloweq} is independent of both the temperature and magnetic field, which is the same as the weak Serrin-type blowup criterion of homogeneous Navier-Stokes equations (see the work of H. Sohr \cite{Sohr}).
\end{remark}

\begin{remark}
The approach can be adapted to deal with the case of bounded domain in $\mathbb{R}^3$. And compared with \cite{Zhou} for bounded domain, some new difficulties occur in our analysis. First, the Poincare's inequality fails for 3D Cauchy problem, which is key to estimate $\|\theta\|_{L^2}.$ Furthermore, due to $\|\mathbf{u}\|_{L^s(0, T; L^r_{\omega})} \le \|\mathbf{u}\|_{L^s(0, T; L^r)}$, which implies the blowup criterion \eqref{bloweq} is stronger than that of \cite{Zhou}.
\end{remark}

The proof of Theorem \ref{thm1} will done by the contradiction argument. In view of the local existence result, to prove Theorem \ref{thm1}, it suffices to verify that $(\rho, \mathbf{u}, \mathbf{H}, \theta)$ satisfy \eqref{RC}-\eqref{CC2} at the time $T^*$ under the assumption that the left hand side of \eqref{bloweq} is finite, then apply the local existence result to extend a local solution beyond the maximal existence time $T^*$, consequently leading to a contradiction.

Based on Theorem \ref{thm1}, we can establish the global existence of strong solutions to \eqref{MHD}-\eqref{boundary} under some smallness condition on the initial data.

\begin{theorem} \label{thm2}
Let the conditions in Theorem \ref{thm1} be in force. Then there exists a small positive constant $\varepsilon_0$ depending only on $\mu, \nu$, and $\|\rho_0\|_{L^{\infty}}$ such that if 
\begin{equation} \label{small}
(\|\sqrt{\rho_0}
\mathbf{u}_0\|_{L^2}^2 + \|\mathbf{H}_0\|_{L^2}^2)(\|\nabla \mathbf{u}_0\|_{L^2}^2 + \|\nabla \mathbf{H}_0\|_{L^2}^2) \le \varepsilon_0,
\end{equation}
then the Cauchy problem of the system \eqref{MHD}-\eqref{boundary} admits a unique global strong solution.
\end{theorem}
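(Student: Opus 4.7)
The plan is to prove Theorem \ref{thm2} by contradiction using Theorem \ref{thm1}. The local existence theory gives a unique strong solution on a maximal interval $[0,T^*)$; suppose $T^*<\infty$. Since $D^1_0\hookrightarrow L^6\hookrightarrow L^6_\omega$ and the pair $(r,s)=(6,4)$ satisfies $2/s+3/r=1$, a uniform bound $\sup_{0\le t<T^*}\|\nabla\mathbf{u}(t)\|_{L^2}<\infty$ would yield $\|\mathbf{u}\|_{L^4(0,T^*;L^6_\omega)}\le (T^*)^{1/4}\sup_t\|\nabla\mathbf{u}\|_{L^2}\cdot C<\infty$, contradicting \eqref{bloweq}. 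The entire theorem therefore reduces to propagating the $H^1$ bound on $\mathbf{u}$ from the smallness \eqref{small}.

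Two a priori estimates suffice. The basic energy identity, obtained by pairing $\eqref{MHD}_2$ with $\mathbf{u}$ and $\eqref{MHD}_4$ with $\mathbf{H}$, reads
\[
\frac{d}{dt}\bigl(\|\sqrt{\rho}\mathbf{u}\|_{L^2}^2+\|\mathbf{H}\|_{L^2}^2\bigr)+4\mu\|\mathfrak{D}(\mathbf{u})\|_{L^2}^2+2\nu\|\nabla\mathbf{H}\|_{L^2}^2=0,
\]
so $\|\sqrt{\rho}\mathbf{u}(t)\|_{L^2}^2+\|\mathbf{H}(t)\|_{L^2}^2$ is non-increasing, and $\|\rho(t)\|_{L^\infty}\le\|\rho_0\|_{L^\infty}$ by the transport equation $\eqref{MHD}_1$. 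The first-order estimate, obtained by pairing $\eqref{MHD}_2$ with $\mathbf{u}_t$ and $\eqref{MHD}_4$ with $\mathbf{H}_t$, has the schematic form
\[
\frac{d}{dt}\bigl(2\mu\|\mathfrak{D}(\mathbf{u})\|_{L^2}^2+\nu\|\nabla\mathbf{H}\|_{L^2}^2\bigr)+\|\sqrt{\rho}\mathbf{u}_t\|_{L^2}^2+\|\mathbf{H}_t\|_{L^2}^2\le C(R_1+R_2+R_3),
\]
with $R_1=\int\rho|\mathbf{u}|^2|\nabla\mathbf{u}|^2\,dx$, $R_2=\int|\mathbf{H}|^2|\nabla\mathbf{u}|^2\,dx$ and $R_3=\int|\mathbf{u}|^2|\nabla\mathbf{H}|^2\,dx$. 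Combining H\"older, $H^1\hookrightarrow L^6$ and the interpolation $\|\sqrt{\rho}\mathbf{u}\|_{L^3}\le C\|\rho\|_{L^\infty}^{1/4}\|\sqrt{\rho}\mathbf{u}\|_{L^2}^{1/2}\|\nabla\mathbf{u}\|_{L^2}^{1/2}$ (and its analogue for $\mathbf{H}$), each $R_i$ is dominated by $C(\mu,\nu,\|\rho_0\|_{L^\infty})$ times
\[
\bigl(\|\sqrt{\rho}\mathbf{u}\|_{L^2}^2+\|\mathbf{H}\|_{L^2}^2\bigr)\bigl(\|\nabla\mathbf{u}\|_{L^2}^2+\|\nabla\mathbf{H}\|_{L^2}^2\bigr)\bigl(\|\sqrt{\rho}\mathbf{u}_t\|_{L^2}^2+\|\mathbf{H}_t\|_{L^2}^2\bigr),
\]
plus lower-order terms absorbable into the left-hand side.

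A continuity argument then closes the loop. Set
\[
\Phi(t):=\bigl(\|\sqrt{\rho}\mathbf{u}(t)\|_{L^2}^2+\|\mathbf{H}(t)\|_{L^2}^2\bigr)\bigl(\|\nabla\mathbf{u}(t)\|_{L^2}^2+\|\nabla\mathbf{H}(t)\|_{L^2}^2\bigr),
\]
and let $T_1:=\sup\{t\in[0,T^*):\Phi\le 2\varepsilon_0\text{ on }[0,t]\}$, which is positive because $\Phi(0)\le\varepsilon_0$ by \eqref{small}. On $[0,T_1]$ the bound on $R_1+R_2+R_3$ gives a prefactor at most $2C\varepsilon_0$ in front of $\|\sqrt{\rho}\mathbf{u}_t\|_{L^2}^2+\|\mathbf{H}_t\|_{L^2}^2$; choosing $\varepsilon_0\le 1/(4C)$ lets the dissipation absorb it, and time integration yields $\|\nabla\mathbf{u}(t)\|_{L^2}^2+\|\nabla\mathbf{H}(t)\|_{L^2}^2\le\|\nabla\mathbf{u}_0\|_{L^2}^2+\|\nabla\mathbf{H}_0\|_{L^2}^2$. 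Combined with the monotonicity of the basic energy this gives $\Phi(t)\le\varepsilon_0<2\varepsilon_0$ strictly on $[0,T_1]$, so a standard open/closed argument forces $T_1=T^*$, delivering the required uniform $H^1$ bound on $\mathbf{u}$ and thereby the contradiction. The remaining regularity required by Definition \ref{def}, including $\theta$, $\mathbf{u}_t$, $\mathbf{H}_t$ and $\theta_t$, follows from standard higher-order parabolic and transport estimates now that $\mathbf{u}$ and $\mathbf{H}$ are globally controlled, and uniqueness is the usual energy-difference argument applied to the difference of two solutions.

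The main obstacle is extracting \emph{exactly} the product structure $(\|\sqrt{\rho}\mathbf{u}\|_{L^2}^2+\|\mathbf{H}\|_{L^2}^2)(\|\nabla\mathbf{u}\|_{L^2}^2+\|\nabla\mathbf{H}\|_{L^2}^2)$ in the bound on $R_1=\int\rho\mathbf{u}\cdot\nabla\mathbf{u}\cdot\mathbf{u}_t\,dx$, since the vacuum rules out the naive split that would rely on an $L^\infty_x$ bound on $\sqrt{\rho}\mathbf{u}$. The $\rho^{1/2}$ weight must be redistributed carefully through the interpolation above so that the exponents come out to match the smallness quantity in \eqref{small}; the coupling terms $R_2$, $R_3$ are less delicate because $\mathbf{H}$ enjoys direct $L^\infty_tL^2_x\cap L^2_tH^1_x$ control without density weights.
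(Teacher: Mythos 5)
Your overall architecture matches the paper's: reduce everything to a uniform bound on $\|\nabla\mathbf{u}\|_{L^2}$, obtain it from the basic energy identity plus a first-order energy estimate and a continuity argument, and contradict \eqref{bloweq} with $(s,r)=(4,6)$. However, the central analytic step — the claimed bound of each $R_i$ by
\begin{equation*}
\bigl(\|\sqrt{\rho}\mathbf{u}\|_{L^2}^2+\|\mathbf{H}\|_{L^2}^2\bigr)\bigl(\|\nabla\mathbf{u}\|_{L^2}^2+\|\nabla\mathbf{H}\|_{L^2}^2\bigr)\bigl(\|\sqrt{\rho}\mathbf{u}_t\|_{L^2}^2+\|\mathbf{H}_t\|_{L^2}^2\bigr)
\end{equation*}
using only H\"older, $H^1\hookrightarrow L^6$ and the $L^3$ interpolation — has a genuine gap. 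In three dimensions a term like $\int\rho|\mathbf{u}|^2|\nabla\mathbf{u}|^2\,dx$ forces $\|\nabla\mathbf{u}\|_{L^3}$ or $\|\nabla\mathbf{u}\|_{L^6}$ into the estimate, hence $\|\nabla^2\mathbf{u}\|_{L^2}$ via Gagliardo--Nirenberg; this norm is not controlled by the quantities on the left of your first-order inequality. The paper supplies it by the Stokes and elliptic estimates \eqref{eq44}--\eqref{eq46}, which convert $\|\nabla^2\mathbf{u}\|_{L^2}^2+\|\nabla^2\mathbf{H}\|_{L^2}^2$ into $\|\sqrt{\rho}\mathbf{u}_t\|_{L^2}^2+\|\mathbf{H}_t\|_{L^2}^2$ plus the same nonlinearities, and adds a small multiple of this to the energy inequality so that second-order dissipation appears on the left and absorbs half of each trilinear term; the residue $C\|\nabla\mathbf{u}\|_{L^2}^6$ is then handled by the \emph{time-integrated} energy bound $\int_0^t(\mu\|\nabla\mathbf{u}\|_{L^2}^2+\nu\|\nabla\mathbf{H}\|_{L^2}^2)\,ds\le C_0$ from \eqref{energy4}, which is where the factor $C_0$ in \eqref{nablauH4} actually comes from — not from a pointwise absorption of $\|\sqrt{\rho}\mathbf{u}_t\|_{L^2}^2+\|\mathbf{H}_t\|_{L^2}^2$ as you propose. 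Without the second-order estimates your scheme cannot close.

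A second, independent problem is the Lorentz-force term $\int\mathbf{H}\cdot\nabla\mathbf{H}\cdot\mathbf{u}_t\,dx$. Because of vacuum you only control $\|\sqrt{\rho}\mathbf{u}_t\|_{L^2}$, not $\|\mathbf{u}_t\|_{L^2}$, so this term cannot be Cauchy--Schwarzed against the dissipation at all; your remark that the coupling terms are ``less delicate because $\mathbf{H}$ enjoys direct control without density weights'' has the difficulty exactly backwards — the obstruction is the unweighted $\mathbf{u}_t$, not $\mathbf{H}$. The paper resolves this by writing $\int\mathbf{H}\cdot\nabla\mathbf{H}\cdot\mathbf{u}_t=-\frac{d}{dt}\int(\mathbf{H}\cdot\nabla)\mathbf{u}\cdot\mathbf{H}+\int(\mathbf{H}_t\cdot\nabla)\mathbf{u}\cdot\mathbf{H}+\int(\mathbf{H}\cdot\nabla)\mathbf{u}\cdot\mathbf{H}_t$, trading $\mathbf{u}_t$ for $\mathbf{H}_t$; the resulting boundary term $\int(\mathbf{H}\cdot\nabla)\mathbf{u}\cdot\mathbf{H}$ is what produces the $C\sqrt{C_0}\sup\|\nabla\mathbf{H}\|_{L^2}^3$ contribution in \eqref{nablauH4}, the factor $2$ in front of the initial data, and the $E^{3/2}$ term in the continuity argument \eqref{eq54}. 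Your proposed inequality $\|\nabla\mathbf{u}(t)\|_{L^2}^2+\|\nabla\mathbf{H}(t)\|_{L^2}^2\le\|\nabla\mathbf{u}_0\|_{L^2}^2+\|\nabla\mathbf{H}_0\|_{L^2}^2$ and the single-threshold bootstrap both omit this term, so the continuity argument as written does not reflect the actual structure of the estimate.
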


We now comment on the analysis of this paper. The study of weak Serrin-type blowup criterion (Theorem \ref{thm1}) is mainly motivated by a recent work of Y. Wang \cite{WangY}, which established a Serrin's blowup criterion for nonhomogeneous heat conducting Navier-Stokes equations in the whole space $\mathbb{R}^3$ using the weak Lebesgue spaces. Compared to Navier-Stokes model in \cite{WangY}, the mathematical analysis of nonhomogeneous heat conducting MHD system will be more complicated on the account of the coupling of the velocity and magnetic field (such as the term $\mathbf{u}\cdot \nabla \mathbf{H}$) and strong nonlinearity (such as the term $\mathbf{H} \cdot \nabla \mathbf{H}$). And to overcome these difficulties, one of the key ideas is to derive a estimate of $\|\mathbf{H}\|_{L^{\infty}(0, T; L^q)}$ for $q>2$ which turns out to play an important role in our analysis. And it should be noted our blowup criterion is independent of the temperature and magnetic field, which means the temperature and magnetic field do not play a particular role when the singularity of solution $(\rho, \mathbf{u}, \mathbf{H}, \theta)$ forms in finite time.

As an immediate application of the blowup criterion obtained in Theorem \ref{thm1}, we plan to extend the local strong solution to be a global one under some smallness condition on the initial data. Noticing that the pair $(s, r) = (4, 6)$ satisfies $\frac{2}{s} + \frac{3}{r} \le 1,$ we conclude that the global existence of a unique strong solution can be verified if we can obtain the uniformly time independent estimate on the $L^2(0, T; L^2)$-norm of the gradient of the velocity. To this end, we multiply the momentum equations by $\mathbf{u}_t$ and make good use of the smallness of initial data to obtain the desired estimate.

The remainder of this paper is arranged as follows. In Section 2, we give some auxiliary lemmas which are useful in our later analysis. The proof of Theorem \ref{thm1} will be done by combining the contradiction argument with the estimates derived in Section 3. Finally, we give the proof of Theorem \ref{thm2} in Section 4.

\section{Preliminaries}

In this section, we will recall some known facts and analytic inequalities that will be used in the later analysis.

We begin with the following local existence and uniqueness of strong solutions when the initial data is allowed vacuum, which can be proved in a similar way as \cite{ChoK} (see also \cite{Wu}).
\begin{lemma} \label{local}
Assume that the initial data $(\rho_0, \mathbf{u}_0, \mathbf{H}_0, \theta_0)$ satisfies \eqref{RC}-\eqref{CC2}. Then there exist a positive time $T_1$ and a unique strong solution to the Cauchy problem \eqref{MHD}-\eqref{boundary} on $\mathbb{R}^3 \times (0, T_1].$
\end{lemma}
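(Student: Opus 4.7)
The plan is to follow the Cho--Kim framework for nonhomogeneous incompressible fluids allowing vacuum, as in \cite{ChoK}, adapted to the heat-conducting MHD system as in \cite{Wu}. The main steps are: (i) regularize away the vacuum, (ii) solve a linearized iteration scheme for the regularized problem, (iii) derive uniform a priori estimates, (iv) pass to the limit, and (v) prove uniqueness.

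First, I would set $\rho_0^\delta = \rho_0 + \delta$ for $\delta \in (0, 1)$, obtaining an initial density strictly bounded below. The compatibility conditions \eqref{CC1}--\eqref{CC2} are modified accordingly by defining $\mathbf{g}_1^\delta, \mathbf{g}_2^\delta$ so that $\sqrt{\rho_0^\delta}\,\mathbf{g}_i^\delta \to \sqrt{\rho_0}\,\mathbf{g}_i$ in $L^2$. For the regularized problem, I would build approximate solutions $(\rho^{n+1}, \mathbf{u}^{n+1}, \mathbf{H}^{n+1}, \theta^{n+1})$ iteratively: transport $\rho^{n+1}$ along the flow of $\mathbf{u}^n$ (which preserves the $L^\infty$ bound and the lower bound $\delta$ on a short interval); solve the linear Stokes-type system for $\mathbf{u}^{n+1}$ with the previous iterate used in convective and Lorentz terms; solve the linear parabolic system for $\mathbf{H}^{n+1}$; and solve the linear parabolic equation for $\theta^{n+1}$. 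Each subproblem is linear with coefficients of the required regularity, so unique solvability in the classes prescribed by \eqref{regularity} follows from standard transport and parabolic theory.

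The crux is obtaining a priori estimates that are uniform in $n$ and in $\delta$ on some short interval $[0, T_1]$, matching the spaces in \eqref{regularity}. Standard energy estimates give $L^\infty_t L^2_x$ bounds for $\sqrt{\rho}\mathbf{u}$, $\mathbf{H}$, $\sqrt{\rho}\theta$ and $L^2_t H^1_x$ bounds for $\nabla\mathbf{u}$, $\nabla\mathbf{H}$, $\nabla\theta$; higher regularity is then bootstrapped by differentiating the equations in time, testing against $\mathbf{u}_t$, $\mathbf{H}_t$, $\theta_t$ respectively, and invoking $L^p$ Stokes estimates plus the transport regularity of $\rho$ in $H^1\cap W^{1,q_0}$. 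Here the compatibility conditions \eqref{CC1}--\eqref{CC2} are essential: they provide $L^2$ control of $(\sqrt{\rho}\mathbf{u}_t, \mathbf{H}_t, \sqrt{\rho}\theta_t)$ at $t=0$, which is exactly what is needed to close the time-differentiated energy estimates against the degeneracy $\rho \to 0$. Choosing $T_1$ small enough (depending only on the initial norms, not on $\delta$ or $n$), one closes all estimates by a Gronwall--absorption argument.

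Next, contracting the iteration in a weaker norm (e.g.\ the $L^\infty_t L^2_x$ norm of differences, with $L^2_t H^1_x$ for differences of $\mathbf{u}, \mathbf{H}, \theta$) yields convergence of $(\rho^n, \mathbf{u}^n, \mathbf{H}^n, \theta^n)$ to a solution $(\rho^\delta, \mathbf{u}^\delta, \mathbf{H}^\delta, \theta^\delta)$ of the regularized problem. Then passing $\delta \to 0$, the uniform bounds combined with weak-$*$ compactness, Aubin--Lions, and the continuity of the transport equation give a strong solution $(\rho, \mathbf{u}, \mathbf{H}, \theta)$ of the original problem \eqref{MHD}--\eqref{boundary} on $\mathbb{R}^3 \times (0, T_1]$ in the class \eqref{regularity}. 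Uniqueness is obtained by taking the difference of two strong solutions, testing the resulting system with the differences themselves, using \eqref{regularity} to absorb the nonlinear commutator terms, and concluding by Gronwall. The main obstacle throughout is the degeneracy caused by vacuum: the momentum and internal-energy equations lose ellipticity where $\rho = 0$, and only the combination of the compatibility conditions with the $\delta$-regularization allows one to obtain estimates on $\sqrt{\rho}\mathbf{u}_t$ and $\sqrt{\rho}\theta_t$ that are stable in the vanishing viscosity $\delta \to 0$; everything else is bookkeeping on top of the Navier--Stokes case in \cite{ChoK}, with the extra Lorentz and Joule heating terms handled via the key auxiliary estimate of $\|\mathbf{H}\|_{L^\infty_t L^q}$.
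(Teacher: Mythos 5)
Your proposal is correct and follows essentially the same route as the paper, which does not prove this lemma at all but simply invokes the local existence theory of Cho--Kim \cite{ChoK} and Wu \cite{Wu}; your sketch (vacuum regularization $\rho_0+\delta$, linearized iteration, uniform short-time estimates closed via the compatibility conditions, limit passage, and uniqueness by a Gronwall argument on differences) is precisely the standard argument those references carry out. The only cosmetic remark is that the $\|\mathbf{H}\|_{L^\infty_t L^q}$ estimate you mention at the end belongs to the global/blowup analysis of Section 3 rather than to the local existence proof.
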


Next, we will introduce the well-known Gagliardo-Nirenberg inequality which will be used later frequently. See \cite{Nirenberg} for the proof and more details.

\begin{lemma} \label{GNin}
For $p \in [2, 6], q \in (1, +\infty),$ and $r \in (3, +\infty),$ there exists some generic constant $C$ which may depend only on $p, q$ and $r,$ such that for $f \in H^1, g \in L^q \cap D^{1, r},$ the following inequalities hold.
\begin{equation} \label{GN1}
\| f \|_{L^p} \le C \|f \|_{L^2}^{\frac{6-p}{p}} \|\nabla f\|_{L^2}^{\frac{3p-6}{2p}},
\end{equation} 
and 
\begin{equation} \label{GN2}
\| g\|_{L^{\infty}} \le C \|g\|_{L^q} + C  \|\nabla g\|_{L^r}.
\end{equation} 
\end{lemma}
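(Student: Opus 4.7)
The plan is to derive both inequalities from classical Sobolev embedding combined with Hölder interpolation, so the ``proof'' is essentially choosing the right parameters and invoking standard results.

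For inequality \eqref{GN1} the approach is interpolation between $L^{2}$ and $L^{6}$. For $p\in[2,6]$ there is a unique $\theta\in[0,1]$ with $\tfrac{1}{p}=\tfrac{\theta}{2}+\tfrac{1-\theta}{6}$, and Hölder's inequality yields $\|f\|_{L^{p}}\le \|f\|_{L^{2}}^{\theta}\|f\|_{L^{6}}^{1-\theta}$. The three-dimensional Sobolev embedding $\|f\|_{L^{6}}\le C\|\nabla f\|_{L^{2}}$ for $f\in D^{1}(\mathbb{R}^{3})$ (proved, for instance, via the Hardy--Littlewood--Sobolev theorem applied to the Riesz potential $I_{1}$, together with $f = I_1(-\Delta)^{1/2}f$ in a suitable sense) then substitutes on the right. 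The precise exponents on $\|f\|_{L^{2}}$ and $\|\nabla f\|_{L^{2}}$ are fixed by scaling: applying the inequality to $f_{\lambda}(x)=f(\lambda x)$ and matching powers of $\lambda$ pins down the homogeneity and removes any ambiguity.

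For inequality \eqref{GN2} the plan is a Morrey-type estimate. For any $x\in\mathbb{R}^{3}$ and $R>0$, use the pointwise identity writing $g(x)$ as the mean value of $g$ over a ball $B_{R}(x)$ plus a correction term of the form $\int_{B_{R}(x)}\tfrac{1}{|x-y|^{2}}|\nabla g(y)|\,dy$ (which comes from the fundamental theorem of calculus along radii, followed by averaging). The mean is bounded by $CR^{-3/q}\|g\|_{L^{q}}$ via Hölder, while the correction term is bounded by $CR^{1-3/r}\|\nabla g\|_{L^{r}}$ using Hölder and the condition $r>3$ that guarantees integrability of $|x-y|^{-2}$ near the singularity. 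Fixing $R=1$ then yields the additive estimate with a constant depending only on $q$ and $r$.

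The only delicate point is the threshold $r>3$, which is precisely what makes the kernel $|x-y|^{-2}$ belong locally to $L^{r'}$ and hence keeps the correction term finite; this is why the hypothesis $r\in(3,\infty)$ cannot be relaxed. Apart from this, both statements are standard consequences of classical Sobolev/Gagliardo--Nirenberg theory, so the proof reduces to bookkeeping of exponents and an appeal to \cite{Nirenberg} for the complete argument.
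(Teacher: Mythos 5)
The paper offers no proof of this lemma at all---it simply cites \cite{Nirenberg}---so there is nothing to compare against except the standard argument, and your proposal is exactly that standard argument, carried out correctly. For \eqref{GN1}, H\"older interpolation between $L^2$ and $L^6$ followed by the Sobolev embedding $\|f\|_{L^6}\le C\|\nabla f\|_{L^2}$ in $\mathbb{R}^3$ is the right route, and your parameter bookkeeping for the interpolation exponent $\theta$ is consistent. For \eqref{GN2}, the Morrey-type representation of $g(x)$ as a ball average plus a Riesz-potential correction, with H\"older applied separately to the two pieces and $R=1$ fixed at the end, is a complete and correct derivation; you correctly identify $r>3$ as precisely the condition making $|x-y|^{-2}$ locally $L^{r'}$, and the powers $R^{-3/q}$ and $R^{1-3/r}$ check out.

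One point worth flagging: your own scaling check for \eqref{GN1} actually exposes a typo in the statement as printed. Writing $\|f\|_{L^p}\le C\|f\|_{L^2}^{\alpha}\|\nabla f\|_{L^2}^{\beta}$ and matching powers of $\lambda$ under $f_\lambda(x)=f(\lambda x)$ forces $\alpha+\beta$ to satisfy $\tfrac{3}{p}=\tfrac{3\alpha}{2}+\tfrac{\beta}{2}$; with $\beta=\tfrac{3p-6}{2p}$ this gives $\alpha=\tfrac{6-p}{2p}$, which is also what your interpolation step produces, not the exponent $\tfrac{6-p}{p}$ appearing in \eqref{GN1}. The two agree only at $p=6$. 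So your proof establishes the correct (and standard) form of the inequality, and the lemma as stated should read $\|f\|_{L^p}\le C\|f\|_{L^2}^{\frac{6-p}{2p}}\|\nabla f\|_{L^2}^{\frac{3p-6}{2p}}$; this is the version actually used implicitly throughout Section 3. This is a defect of the statement rather than of your argument, but you should say explicitly that the scaling computation corrects the printed exponent rather than confirming it.
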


Since our blowup criterion \eqref{bloweq} involves a weak Lebesgue space, it is necessary to give a short introduction and state related inequalities. Denote the Lorentz space and its norm by $L^{p,q}$ and $\|\cdot\|_{L^{p,q}}$, respectively, where $1<p< \infty$ and $ 1 \le q \le \infty$. And we recall the weak-$L^p$ space $L^p_{\omega}$ which is defined as follows:
$$ L^p_{\omega} := \{ f \in L^1_{\text{loc}}: \|f \|_{L^p_{\omega}} = \sup_{\lambda>0} \lambda|\{|f(x)| > \lambda\}|^{\frac{1}{p}} < \infty \}.$$
And it should be noted that
$$ L^p \subsetneqq L^p_{\omega}, \quad L^{\infty}_{\omega} = L^{\infty}, \quad L^p_{\omega} = L^{p,\infty}, \quad L^{p, p} = L^p, \quad \|f\|_{L^p_{\omega}} \le \|f\|_{L^p}.$$
For the details of Lorentz space, we refer to the monograph by Grafakos {\cite{Gra}}. In particular, we introduce the following H\"older inequality in Lorentz space whose proof can be found in \cite{kozono}.
\begin{lemma} \label{kozono}
Let $p_1, p_2 \in (0, \infty), q_1, q_2 \in [1, \infty]$ satisfying $\frac{1}{p} = \frac{1}{p_1} + \frac{1}{p_2} <1$ and $q= \min\{q_1, q_2\}.$ Then for $f\in L^{p_1,q_1}$ and $g \in L^{p_2, q_2},$ there exists a positive constant $C$ depending on $p_1, p_2, q_1$ and $q_2$ such that $f\cdot g \in L^{p, q}$ satisfying
\begin{equation} \label{kozono1}
\|f\cdot g\|_{L^{p,q}} \le C \|f\|_{L^{p_1, q_1}} \|g\|_{L^{p_2, q_2}}.
\end{equation}
\end{lemma}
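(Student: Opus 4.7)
My plan is to prove this Hölder inequality in Lorentz spaces by passing to decreasing rearrangements, following the classical approach of O'Neil. The essential tool is the equivalent characterization of the Lorentz norm in terms of the decreasing rearrangement $f^{*}$ of $|f|$,
\[
\|f\|_{L^{p,q}} \asymp \left(\int_0^\infty \bigl(t^{1/p} f^{*}(t)\bigr)^q \frac{dt}{t}\right)^{1/q} \text{ for } q<\infty, \qquad \|f\|_{L^{p,\infty}} = \sup_{t>0}\, t^{1/p} f^{*}(t).
\]

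The first step is to establish the pointwise rearrangement inequality $(fg)^{*}(t) \le f^{*}(t/2)\,g^{*}(t/2)$, which follows at once from the set inclusion
\[
\{x : |f(x)g(x)| > f^{*}(t/2)\,g^{*}(t/2)\} \subseteq \{|f| > f^{*}(t/2)\} \cup \{|g| > g^{*}(t/2)\},
\]
whose right-hand side has measure at most $t$ by the very definition of $f^{*}$ and $g^{*}$.

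The second step is to insert this bound into the norm formula and split the weight as $t^{1/p} = t^{1/p_1} \cdot t^{1/p_2}$ using the hypothesis $\tfrac{1}{p}=\tfrac{1}{p_1}+\tfrac{1}{p_2}$. Applying the ordinary Hölder inequality on $(0,\infty)$ with respect to the measure $dt/t$, with exponents $q_1$ and $q_2$ whose reciprocals sum to $1/r$, produces
\[
\|fg\|_{L^{p,r}} \le C\,\|f\|_{L^{p_1,q_1}}\,\|g\|_{L^{p_2,q_2}}
\]
after the innocuous substitution $t \mapsto 2t$. Since $r \le \min(q_1,q_2) = q$, the continuous embedding $L^{p,r} \hookrightarrow L^{p,q}$ (valid for Lorentz spaces sharing the first index) upgrades the left-hand norm to $\|fg\|_{L^{p,q}}$, which gives the claim.

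The main technical subtlety is the endpoint cases $q_1 = \infty$ or $q_2 = \infty$, where the weighted Hölder step cannot be applied verbatim. In that situation I would instead exploit the pointwise bound $f^{*}(t) \le t^{-1/p_i}\|f\|_{L^{p_i,\infty}}$ coming directly from the weak-$L^{p_i}$ definition, substitute it inside the integral defining the Lorentz norm of $fg$, and reduce the estimate to a single Lorentz quasi-norm of the remaining factor. A secondary (bookkeeping) difficulty is tracking the constant $C$: it absorbs the factor from the change of variable $t \mapsto 2t$ together with the embedding constant, both of which are explicit powers of $2$ depending only on $p_1,p_2,q_1,q_2$.
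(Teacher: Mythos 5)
Your argument is correct, but it is worth noting that the paper itself offers no proof of this lemma: it simply defers to the reference of Kozono and Yamazaki (and the background on Lorentz spaces to Grafakos). What you supply is the standard self-contained O'Neil-type proof via decreasing rearrangements, and every step checks out: the inclusion $\{|fg|>f^{*}(t/2)g^{*}(t/2)\}\subseteq\{|f|>f^{*}(t/2)\}\cup\{|g|>g^{*}(t/2)\}$ together with $d_f(f^{*}(s))\le s$ gives $(fg)^{*}(t)\le f^{*}(t/2)\,g^{*}(t/2)$; the substitution $t\mapsto 2t$ costs only a factor $2^{1/p}$; the weighted H\"older step with exponents $q_1/r,\ q_2/r$ where $1/r=1/q_1+1/q_2$ is legitimate (and degenerates harmlessly to a supremum when one index is $\infty$, so your ``endpoint subtlety'' is less delicate than you suggest); and the embedding $L^{p,r}\hookrightarrow L^{p,q}$ for $r\le q=\min\{q_1,q_2\}$ is the standard monotonicity of Lorentz spaces in the second index, valid with a constant depending only on the exponents. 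One small point of hygiene: since you work with the rearrangement quasi-norm, which is only equivalent (not equal) to whatever normalization the cited references use, the equivalence constants must also be absorbed into $C$ — you acknowledge this with the $\asymp$, and the lemma only claims the bound up to a constant, so nothing is lost. In short, your route makes the lemma self-contained where the paper relies on a citation, at the cost of a page of classical real-variable bookkeeping.
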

Based on the above lemma \ref{kozono}, we have the following result involving the weak Lebesgue spaces, which will play an important role in the subsequent analysis. 
\begin{lemma} \label{weaksobolev}
Assume $g \in H^1$, and $f \in L^r_{\omega}$ with $r\in (3, \infty]$, then $f\cdot g \in L^2$. Furthermore, for any $\varepsilon >0$, we have
\begin{equation}  \label{weakembed}
\|f\cdot g\|_{L^2}^2 \le \varepsilon \|\nabla g\|_{L^2}^2 + C(\varepsilon) (\|f\|_{L^r_{\omega}}^s +1) \|g\|_{L^2}^2,
\end{equation}
where $C$ is a positive constant depending only on $\varepsilon$ and $r$.
\end{lemma}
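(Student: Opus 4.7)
The plan is to combine the Lorentz--H\"older inequality of Lemma \ref{kozono} with real interpolation and the Sobolev embedding $H^1\hookrightarrow L^6$. The natural exponent is the one conjugate to $r$ in the Serrin scaling, namely $s=2r/(r-3)$; when $r=\infty$ one has $s=2$ and the claim reduces to the trivial $\|fg\|_{L^2}\le \|f\|_{L^\infty}\|g\|_{L^2}$, so I focus on $r\in(3,\infty)$.

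Set $p=\tfrac{2r}{r-2}$, so that $\tfrac{1}{2}=\tfrac{1}{r}+\tfrac{1}{p}$. Applying Lemma \ref{kozono} with indices $(p_1,q_1)=(r,\infty)$ and $(p_2,q_2)=(p,2)$, together with $L^{r,\infty}=L^r_\omega$ and $L^{2,2}=L^2$, gives
\[
\|fg\|_{L^2}\le C\,\|f\|_{L^r_\omega}\,\|g\|_{L^{p,2}}.
\]
Thus it remains to control the mixed-Lorentz norm $\|g\|_{L^{p,2}}$ by $\|g\|_{L^2}$ and $\|\nabla g\|_{L^2}$.

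To do this I would use the real-interpolation identity $(L^2,L^6)_{\theta,2}=L^{p,2}$ with $\theta=3/r$, which is precisely what the relation $\tfrac{1}{p}=\tfrac{1-\theta}{2}+\tfrac{\theta}{6}$ forces. Combined with the Sobolev embedding $\|g\|_{L^6}\le C\|\nabla g\|_{L^2}$ this yields the Gagliardo--Nirenberg-type bound
\[
\|g\|_{L^{p,2}}\le C\,\|g\|_{L^2}^{\,1-3/r}\,\|\nabla g\|_{L^2}^{\,3/r}.
\]
Squaring the resulting estimate for $\|fg\|_{L^2}$ and applying Young's inequality with exponents $r/3$ and $r/(r-3)$ to separate the factor $\|\nabla g\|_{L^2}^{6/r}$ from the remainder produces
\[
\|fg\|_{L^2}^2\le \varepsilon\,\|\nabla g\|_{L^2}^2+C(\varepsilon)\,\|f\|_{L^r_\omega}^{\,2r/(r-3)}\,\|g\|_{L^2}^2,
\]
so $s=2r/(r-3)$, which is exactly the Serrin exponent $\tfrac{2}{s}+\tfrac{3}{r}=1$. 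The harmless $+1$ in the statement of the lemma just accommodates the degenerate case $\|f\|_{L^r_\omega}=0$ or very small.

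The only step that is not completely routine is the refined interpolation $\|g\|_{L^{p,2}}\lesssim \|g\|_{L^2}^{1-3/r}\|\nabla g\|_{L^2}^{3/r}$: it is genuinely stronger than the plain $L^p$ Gagliardo--Nirenberg bound, because $L^{p,2}\subsetneq L^p$ for $p>2$, so it cannot be obtained by straightforward H\"older. I would justify it either by invoking the classical real-interpolation characterisation of Lorentz spaces (as in Grafakos \cite{Gra}) applied to the pair $(L^2,L^6)$, or alternatively via a direct layer-cake decomposition $f=f\chi_{|f|\le\lambda}+f\chi_{|f|>\lambda}$ and optimisation in $\lambda$, which avoids Lorentz interpolation entirely and directly yields the estimate for $\|fg\|_{L^2}$; the rest of the argument is then the Young splitting described above.
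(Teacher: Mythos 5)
Your argument is correct and follows essentially the same route as the paper: both apply the Lorentz--H\"older inequality of Lemma \ref{kozono} with $(p_1,q_1)=(r,\infty)$ and $(p_2,q_2)=\left(\frac{2r}{r-2},2\right)$, reduce matters to the Gagliardo--Nirenberg-type bound $\|g\|_{L^{\frac{2r}{r-2},2}}\le C\|g\|_{L^2}^{1-3/r}\|\nabla g\|_{L^2}^{3/r}$, and conclude with Young's inequality, yielding the critical exponent $s=\frac{2r}{r-3}$. The only (cosmetic) difference is in how that intermediate bound is justified: the paper squares the Lorentz norm and dominates it by the product $\|g\|_{L^{2r_1/(r_1-2)}}\|g\|_{L^{2r_2/(r_2-2)}}$ with $3<r_1<r<r_2<\infty$ and $\frac{2}{r}=\frac{1}{r_1}+\frac{1}{r_2}$, each factor handled by the ordinary Gagliardo--Nirenberg inequality \eqref{GN1}, whereas you invoke the real-interpolation identity $(L^2,L^6)_{3/r,2}=L^{\frac{2r}{r-2},2}$ directly.
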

\begin{proof} Modifying the proof in \cite{K} for bounded domains slightly, it follows from \eqref{kozono1} and interpolation inequality that
\begin{equation} 
\begin{aligned}
\|f\cdot g\|_{L^2}^2 & = \|f\cdot g\|_{L^{2,2}}^2  \le C \|f\|_{L^{r, \infty}}\|g\|_{L^{\frac{2r}{r-2}, 2}} \\
& \le  C \|f\|_{L^r_{\omega}} \|g\|_{L^{\frac{2r_1}{r_1-2}}} \|g\|_{L^{\frac{2r_2}{r_2-2}}}\\
& \le C \|f\|_{L^r_{\omega}} \|g\|_{L^2}^{\frac{r_1-2}{r_1}}\|g\|_{L^6}^{\frac{3}{r_1}} \|g\|_{L^2}^{\frac{r_2-2}{r_2}}\|g\|_{L^6}^{\frac{3}{r_2}}\\
& \le C \|f\|_{L^r_{\omega}} \|g\|_{L^2}^{\frac{2r-6}{r}}\|g\|_{L^6}^{\frac{6}{r}}\\
& \le C \|f\|_{L^r_{\omega}} \|g\|_{L^2}^{\frac{2r-6}{r}}\|\nabla g\|_{L^2}^{\frac{6}{r}}\\
& \le \varepsilon \|\nabla g\|_{L^2}^2 + C(\varepsilon)(\|f\|_{L^r_{\omega}}^s +1 ) \|g\|_{L^2}^2,
\end{aligned}
\end{equation}
where $r_1, r_2$ and $r$ satisfy $3 < r_1 < r < r_2 < \infty, \frac{2}{r}=\frac{1}{r_1} + \frac{1}{r_2}$ and $\frac{2}{s} + \frac{3}{r} \le 1.$
This completes the proof of Lemma \ref{weaksobolev}.
\end{proof}

Finally, we give classical regularity results for the Stokes system in the whole space $\mathbb{R}^3$, which have been proved in \cite{HeLi}.

\begin{lemma} \label{stokeseq}
For any $r \in (1, +\infty),$ if $\mathbf{F} \in L^r,$ there exists some positive constant $C$ depending only on $r$ such that the unique weak solution $(\mathbf{U}, P) \in D^1 \times L^2$ to the following Stokes system
\begin{equation} \label{stokes}
\left\{
\begin{aligned}
-\Delta \mathbf{U} + \nabla P = \mathbf{F},  \quad \quad &\text{in} \quad \mathbb{R}^3, \\
\mathrm{div} \mathbf{U} =0, \quad \quad &\text{in} \quad \mathbb{R}^3, \\
 \mathbf{U}(x) \rightarrow 0, \quad \quad & \text{as} \quad |x| \rightarrow + \infty, \\
\end{aligned}
\right.
\end{equation}
satisfying
\begin{equation} \label{stokesin}
\|\nabla^2 \mathbf{U}\|_{L^r} + \|\nabla P\|_{L^r} \le C \|\mathbf{F}\|_{L^r}.
\end{equation}
\end{lemma}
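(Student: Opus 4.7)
The plan is to use Fourier analysis together with the Calder\'on--Zygmund (equivalently Mikhlin--H\"ormander multiplier) theorem on $\mathbb{R}^3$. First, Fourier-transforming the system \eqref{stokes} turns the PDE into the algebraic pair $|\xi|^2\hat{\mathbf U}(\xi)+i\xi\,\hat P(\xi)=\hat{\mathbf F}(\xi)$ together with the constraint $i\xi\cdot\hat{\mathbf U}(\xi)=0$. Taking the inner product of the momentum equation with $i\xi$ and using the divergence-free condition immediately yields $\hat P(\xi)=-i\xi\cdot\hat{\mathbf F}(\xi)/|\xi|^2$, and back-substitution gives
\[
\hat{\mathbf U}(\xi)=\frac{1}{|\xi|^2}\Bigl(\mathbf I-\frac{\xi\otimes\xi}{|\xi|^2}\Bigr)\hat{\mathbf F}(\xi),
\]
which is essentially $|\xi|^{-2}$ times the Leray projection applied to $\mathbf F$.

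Second, differentiating these representations once for $P$ and twice for $\mathbf U$ produces the Fourier multipliers
\[
\widehat{\partial_j\partial_k U^\ell}(\xi)=-\frac{\xi_j\xi_k}{|\xi|^2}\Bigl(\delta_{\ell m}-\frac{\xi_\ell\xi_m}{|\xi|^2}\Bigr)\hat F^{m}(\xi),\qquad \widehat{\partial_k P}(\xi)=\frac{\xi_k\xi_m}{|\xi|^2}\hat F^{m}(\xi),
\]
which are $C^{\infty}$ off the origin and homogeneous of degree zero, hence satisfy the Mikhlin bound $|\partial^{\alpha}m(\xi)|\le C_{\alpha}|\xi|^{-|\alpha|}$. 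The associated operators are therefore bounded from $L^{r}$ to $L^{r}$ for every $r\in(1,\infty)$; alternatively, each is a composition of Riesz transforms, to which the classical Calder\'on--Zygmund theorem directly applies. This delivers the target bound \eqref{stokesin}.

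For existence and uniqueness in the class $D^{1}\times L^{2}$ I would first construct the weak solution at $r=2$ by Lax--Milgram on the divergence-free subspace of $D^{1}_{0}$, using the Sobolev embedding $D^{1}_{0}\hookrightarrow L^{6}$ to secure the far-field condition in $\mathbb{R}^{3}$, and recover $P\in L^{2}$ from $\nabla P\in L^{2}$ via de Rham's lemma. For general $r\in(1,\infty)$ I would approximate $\mathbf F$ in $L^{r}$ by elements of $L^{r}\cap L^{2}$ and pass to the limit using \eqref{stokesin}. Uniqueness follows from a Liouville argument: a solution with $\mathbf F\equiv 0$ has $\mathbf U$ harmonic with $\nabla \mathbf U\in L^{2}$ and vanishing at infinity, forcing $\mathbf U\equiv 0$, whence $\nabla P\equiv 0$ with $P\in L^{2}$ gives $P\equiv 0$.

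The main obstacle, beyond the routine Mikhlin verification, is correctly setting up the function spaces on the whole space: the homogeneous space $D^{1}_{0}$ only acquires meaning together with the Sobolev embedding into $L^{6}$, and the pressure has to be normalized to live in $L^{2}$ rather than merely modulo a constant, as would be automatic on bounded domains. Once these global issues are handled, the estimate \eqref{stokesin} reduces to a clean application of Fourier multiplier theory.
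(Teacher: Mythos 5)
The paper does not actually prove this lemma; it simply cites the reference [HeLi] (He--Li--L\"u) for the classical whole-space Stokes regularity, so there is no in-paper argument to compare against. Your proof is the standard one and its core is correct: the Fourier symbols you compute for $\hat P$ and $\hat{\mathbf U}$ are right, the multipliers for $\nabla^2\mathbf U$ and $\nabla P$ are indeed smooth off the origin and homogeneous of degree zero (compositions of Riesz transforms), and Mikhlin--H\"ormander or Calder\'on--Zygmund then gives \eqref{stokesin} for all $r\in(1,\infty)$, which is the only part of the lemma the paper actually uses (it is applied as an a priori estimate to an already-existing solution $\mathbf u$). One small caution on your existence step: Lax--Milgram on the divergence-free subspace of $D^1_0$ requires the right-hand side to define a bounded functional there, i.e.\ $\mathbf F\in(D^1_0)^*\supset L^{6/5}$; a general $\mathbf F\in L^2(\mathbb{R}^3)$ need not lie in this dual, so the construction at $r=2$ should either be stated for $\mathbf F\in L^{6/5}\cap L^r$ (dense enough for the limiting argument) or be replaced by the explicit Fourier representation you already wrote down, which directly defines $(\mathbf U,P)$ for nice $\mathbf F$ and extends by density using \eqref{stokesin}. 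The Liouville-type uniqueness argument is fine. In short: correct approach, standard and self-contained where the paper only cites, with one fixable imprecision in the functional-analytic setup of existence.
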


\section{Proof of Theorem \ref{thm1}}

This section is devoted to giving a proof of Theorem \ref{thm1} by using the contradiction argument. To do this, let $(\rho, \mathbf{u}, \mathbf{H}, \theta)$  be a strong solution to the Cauchy problem  \eqref{MHD}-\eqref{boundary} as described in Lemma \ref{local}. Suppose that \eqref{bloweq} in Theorem \ref{thm1} were false, that is to say, there exists a positive constant $M_0$ such that
\begin{equation} \label{blowup}
\lim_{T \rightarrow T^*} \| \mathbf{u} \|_{ L^s (0,T; L^r_{\omega})} \le M_0  < + \infty.
\end{equation}
Under the condition \eqref{blowup}, we will extend the existence time of the strong solution beyond $T^*$, which contradicts the definition of maximum of $T^*$. 

Before proceeding, it is easy to rewrite the system \eqref{MHD} in the following form if we assume the solution $(\rho, \mathbf{u}, \mathbf{H}, \theta)$ is regular enough:
\begin{equation} \label{MHD_1}
\left\{
\begin{aligned}
&\partial_t\rho + \mathbf{u}\cdot \nabla \rho = 0, \\
&\rho \partial_t \mathbf{u} + \rho \mathbf{u}\cdot \nabla \mathbf{u} - \mu\Delta \mathbf{u} +\nabla P = (\mathbf{H} \cdot \nabla) \mathbf{H}, \\
& c_v (\rho \partial_t \theta + \rho \mathbf{u} \cdot \nabla \theta)-\kappa \Delta \theta = 2\mu |\mathfrak{D} (\mathbf{u})|^2 + \nu |\nabla \times \mathbf{H}|^2, \\
& \partial_t \mathbf{H} + (\mathbf{u} \cdot \nabla) \mathbf{H} - (\mathbf{H}\cdot \nabla) \mathbf{u} = \nu \Delta \mathbf{H}, \\
&\mathrm{div} \mathbf{u}=\mathrm{div} \mathbf{H}= 0.
\end{aligned}
\right.
\end{equation}

In this section, the symbol $C$ denotes a generic constant which may depend on $M_0, \mu, \nu, c_v, \kappa, T^*$, and the initial data.

Now we establish some a priori estimates which will be used to prove Theorem \ref{thm1} in the end of this section.

\subsection{Lower-order estimates}

A series of key lower-order estimates to $(\rho, \mathbf{u}, \mathbf{H}, \theta)$ will be derived in this subsection.

First, it follows from the transport equation $\eqref{MHD_1}_1$ for the density and incompressible condition $\mathrm{div} \mathbf{u} =0$ that the following result holds.
\begin{lemma} \label{lem1}
There exists a positive constant $C$ satisfying
\begin{equation} \label{rho_l^p}
\sup_{0 \le t \le T} \|\rho \|_{L^1 \cap L^{\infty}} \le C, \quad \quad 0 \le T< T^*.
\end{equation}
\end{lemma}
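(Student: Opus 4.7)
The estimate is essentially the standard transport estimate for an incompressible flow, so my plan is to show that both norms are in fact preserved along the flow (which gives a bound by the corresponding norms of $\rho_0$).

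For the $L^\infty$ bound, the plan is to use the fact that the density satisfies the pure transport equation $\partial_t\rho + \mathbf{u}\cdot\nabla\rho = 0$ with the divergence-free velocity $\mathbf{u}$. Thus I would multiply $\eqref{MHD_1}_1$ by $p\rho^{p-1}$ for $p\in[1,\infty)$, integrate over $\mathbb{R}^3$, and use integration by parts together with $\mathrm{div}\,\mathbf{u}=0$ to conclude
\[
\frac{d}{dt}\int \rho^{p}\,dx = -\int \mathbf{u}\cdot\nabla(\rho^{p})\,dx = \int (\mathrm{div}\,\mathbf{u})\rho^{p}\,dx = 0,
\]
so $\|\rho(t)\|_{L^p} = \|\rho_0\|_{L^p}$ for every $p\in[1,\infty)$. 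Sending $p\to\infty$ gives $\|\rho(t)\|_{L^\infty}\le\|\rho_0\|_{L^\infty}$. Alternatively, one could invoke the maximum principle for transport equations driven by a divergence-free field.

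For the $L^1$ bound, I would rewrite the mass equation in conservative form $\partial_t\rho + \mathrm{div}(\rho\mathbf{u}) = 0$ (which is equivalent to $\eqref{MHD_1}_1$ in view of $\mathrm{div}\,\mathbf{u}=0$), integrate over $\mathbb{R}^3$, and use the regularity of the strong solution together with the decay at infinity \eqref{boundary} to justify that the divergence term contributes nothing. This yields $\|\rho(t)\|_{L^1}=\|\rho_0\|_{L^1}$, which is finite by assumption \eqref{RC}. Combining the two estimates gives \eqref{rho_l^p} with $C$ depending only on $\|\rho_0\|_{L^1\cap L^\infty}$.

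The computation is entirely routine; the only mild technical point is ensuring that the formal manipulations (integration by parts, passing $p\to\infty$) are legitimate for a strong solution in the sense of Definition \ref{def}. Given $\rho\in C([0,T];H^1\cap W^{1,q_0})$ and the far-field decay, both the $L^p$ identity and the $L^1$ conservation are justified by standard density/truncation arguments, so I do not expect any genuine obstacle here.
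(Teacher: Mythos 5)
Your proposal is correct and follows exactly the route the paper intends: the paper states Lemma \ref{lem1} without proof, asserting only that it ``follows from the transport equation $\eqref{MHD_1}_1$ and $\mathrm{div}\,\mathbf{u}=0$'' (and later invokes the same conservation $\|\rho(t)\|_{L^{\infty}}=\|\rho_0\|_{L^{\infty}}$ via \cite[Theorem 2.1]{lions}), which is precisely the standard $L^p$-conservation argument you carry out. Your version simply supplies the routine details the author omitted, so there is nothing to correct.
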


Next, the standard energy estimates read as follows.
\begin{lemma} \label{lem2}
It holds that for any $0  \le T < T^*$, 
\begin{equation} \label{elementary}
\begin{aligned}
\sup_{0\le t\le T}\left(\|\sqrt{\rho}\mathbf{u}\|_{L^2}^2 + \|\mathbf{H}\|_{L^2}^2 + \|\rho \theta \|_{L^1} \right)+  \int_0^T (\|\nabla \mathbf{u}\|_{L^2}^2 + \|\nabla \mathbf{H}\|_{L^2}^2)dt \le C.
\end{aligned}
\end{equation}
\end{lemma}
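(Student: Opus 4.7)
The plan is to combine the standard kinetic/magnetic energy identity with an $L^1$ estimate for $\rho\theta$ obtained by integrating the temperature equation in space. All three estimates decouple nicely because the temperature enters the system only through forcing on its own equation.

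First I would take the $L^2$ inner product of the momentum equation $\eqref{MHD_1}_2$ with $\mathbf{u}$. Using the continuity equation $\eqref{MHD_1}_1$, the material-derivative terms produce $\tfrac{1}{2}\tfrac{d}{dt}\|\sqrt{\rho}\mathbf{u}\|_{L^2}^2$; the viscous term $-\mu\Delta\mathbf{u}$ yields $\mu\|\nabla\mathbf{u}\|_{L^2}^2$ after one integration by parts; and the pressure gradient drops out because $\mathrm{div}\,\mathbf{u}=0$. Similarly, taking the $L^2$ inner product of the induction equation $\eqref{MHD_1}_4$ with $\mathbf{H}$, the transport term $\int(\mathbf{u}\cdot\nabla)\mathbf{H}\cdot\mathbf{H}\,dx$ vanishes by $\mathrm{div}\,\mathbf{u}=0$ and the dissipation produces $\nu\|\nabla\mathbf{H}\|_{L^2}^2$. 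Summing the two identities, the Lorentz-force coupling $\int(\mathbf{H}\cdot\nabla)\mathbf{H}\cdot\mathbf{u}\,dx$ and the stretching term $-\int(\mathbf{H}\cdot\nabla)\mathbf{u}\cdot\mathbf{H}\,dx$ cancel exactly after an integration by parts that uses $\mathrm{div}\,\mathbf{H}=0$. This yields
\begin{equation*}
\tfrac{1}{2}\tfrac{d}{dt}\bigl(\|\sqrt{\rho}\mathbf{u}\|_{L^2}^2+\|\mathbf{H}\|_{L^2}^2\bigr)+\mu\|\nabla\mathbf{u}\|_{L^2}^2+\nu\|\nabla\mathbf{H}\|_{L^2}^2=0,
\end{equation*}
and integration in time gives the first two terms of the supremum and both dissipation integrals.

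For the $L^1$-bound on $\rho\theta$, I would simply integrate the temperature equation $\eqref{MHD_1}_3$ over $\mathbb{R}^3$. The convective flux $\mathrm{div}(\rho\mathbf{u}\theta)$ and the diffusion $\kappa\Delta\theta$ contribute zero by the far-field conditions \eqref{boundary}, leaving
\begin{equation*}
c_v\tfrac{d}{dt}\|\rho\theta\|_{L^1}=\int\bigl(2\mu|\mathfrak{D}(\mathbf{u})|^2+\nu|\nabla\times\mathbf{H}|^2\bigr)\,dx.
\end{equation*}
Using $\mathrm{div}\,\mathbf{u}=\mathrm{div}\,\mathbf{H}=0$, the right-hand side simplifies to $\mu\|\nabla\mathbf{u}\|_{L^2}^2+\nu\|\nabla\mathbf{H}\|_{L^2}^2$ (via the standard identities $\int 2|\mathfrak{D}(\mathbf{u})|^2=\int|\nabla\mathbf{u}|^2$ and $\int|\nabla\times\mathbf{H}|^2=\int|\nabla\mathbf{H}|^2$ for divergence-free fields). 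Integrating in $t$ and invoking the dissipation bound just obtained closes the estimate.

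There is no genuine obstacle here; the argument is the standard MHD energy identity, and the temperature bound is a free consequence of it because $\theta$ does not feed back into the momentum or induction equations. The only points to be careful about are the algebraic cancellation of the coupling terms (which requires the divergence-free condition on $\mathbf{H}$, not just $\mathbf{u}$) and the justification of the vanishing boundary contributions at infinity, which is guaranteed by the regularity class specified in Definition \ref{def}.
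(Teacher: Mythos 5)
Your proposal is correct and follows essentially the same route as the paper: the standard kinetic--magnetic energy identity with exact cancellation of the coupling terms, followed by spatial integration of the temperature equation and the identities $2\int|\mathfrak{D}(\mathbf{u})|^2\,dx=\int|\nabla\mathbf{u}|^2\,dx$ and $\|\nabla\times\mathbf{H}\|_{L^2}=\|\nabla\mathbf{H}\|_{L^2}$ for divergence-free fields. The only cosmetic difference is that the paper combines the two identities into a single Gronwall-free balance before integrating in time, and explicitly records $\theta\ge 0$ (via the maximum principle) so that $\int\rho\theta\,dx$ really is $\|\rho\theta\|_{L^1}$, a point you use implicitly.
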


\begin{proof}
It follows from the standard maximum principle to $\eqref{MHD}_3$ together with $\theta_0 \ge 0$ that (see \cite{Feireisl} for the proof)
$$\inf_{\mathbb{R}^3 \times [0, T]} \theta(x,t) \ge 0.$$

Moreover, multiplying $\eqref{MHD}_2$ by $\mathbf{u}$, $\eqref{MHD_1}_4$ by $\mathbf{H}$, and integrating the resulting equations over $\mathbb{R}^3$, respectively, it follows from integrating by parts that
\begin{equation} \label{energy_diff}
\frac{1}{2} \frac{d}{dt}\int (\rho|\mathbf{u}|^2 + |\mathbf{H}|^2) dx + \int (\mu |\nabla \mathbf{u}|^2 + \nu |\nabla \mathbf{H}|^2)dx =0.
\end{equation}
Integrating $\eqref{MHD}_3$ with respect to the spatial variable over $\mathbb{R}^3$ and performing integration by parts, we obtain that
\begin{equation} \label{rhotheta}
\begin{aligned}
c_v \frac{d}{dt} \int \rho\theta dx & = \int (2\mu |\mathfrak{D}(\mathbf{u})|^2 + \nu |\nabla \times \mathbf{H}|^2) dx.
\end{aligned}
\end{equation}
By the definition of $\mathfrak{D}(\mathbf{u})$ and integration by parts, we get
\begin{equation} \label{11}
\begin{aligned}
2\mu \int |\mathfrak{D}(\mathbf{u})|^2 dx & = \frac{\mu}{2} \int (\partial_i u^j + \partial_j u^i)^2 dx \\
& = \mu \int |\partial_i u^j|^2 dx + \mu \int \partial_i u^j \partial_j u^i dx \\
& = \mu \int |\nabla \mathbf{u}|^2 dx,
\end{aligned}
\end{equation}
and it follows from $-\Delta \mathbf{H}= \nabla \times (\nabla \times \mathbf{H})$ (since $\mathrm{div} \mathbf{H} =0)$ that
\begin{equation} \label{12}
\|\nabla \mathbf{H}\|_{L^2}^2 = \|\nabla \times \mathbf{H}\|_{L^2}^2.
\end{equation}
Substituting \eqref{11} and \eqref{12} into \eqref{rhotheta}, it gives
\begin{equation} \label{rhotheta_1}
\begin{aligned}
c_v \frac{d}{dt} \int \rho\theta dx = \int (\mu |\nabla \mathbf{u}|^2 + \nu |\nabla \mathbf{H}|^2) dx.
\end{aligned}
\end{equation}

Therefore, adding \eqref{energy_diff} multiplied by $2$ to \eqref{rhotheta_1}, we have
\begin{equation} \label{energy_diff1}
\frac{d}{dt}\int (\rho|\mathbf{u}|^2 + |\mathbf{H}|^2 + c_v \rho \theta) dx + \int (\mu |\nabla \mathbf{u}|^2 + \nu |\nabla \mathbf{H}|^2)dx =0.
\end{equation}
Integrating \eqref{energy_diff1} with respect to $t$ over $[0, T]$ leads to the desired \eqref{elementary}, which completes the proof of Lemma \ref{lem2}.
\end{proof}

Before deriving the key estimates of $\|\nabla \mathbf{u}\|_{L^{\infty}(0, T; L^2)}$ and $\|\nabla \mathbf{H}\|_{L^{\infty}(0, T; L^2)}$, we insert an important estimate on magnetic field $\mathbf{H}$ initiated by He and Xin \cite{HeX}, which will be stated in the following lemma.

\begin{lemma} \label{hx}
Under the condition \eqref{blowup}, it holds that for $q \in [2, 12]$ and $0 \le  T < T^*,$
\begin{equation} \label{hexin}
\sup_{0 \le t \le T} \|\mathbf{H}\|_{L^q}^q + \int_0^T \int |\mathbf{H}|^{q-2}|\nabla \mathbf{H}|^2 dx dt \le C.
\end{equation}
\end{lemma}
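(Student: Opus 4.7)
The plan is to carry out a He--Xin type $L^q$-energy estimate directly on the induction equation $\eqref{MHD_1}_4$, using the weak-Lebesgue information on $\mathbf{u}$ from \eqref{blowup} only through Lemma~\ref{weaksobolev} when closing the estimate. First I would multiply $\eqref{MHD_1}_4$ by $q|\mathbf{H}|^{q-2}\mathbf{H}$ and integrate over $\mathbb{R}^3$. The convective term rewrites as $\int \mathbf{u}\cdot\nabla(|\mathbf{H}|^q)\,dx$ and vanishes thanks to $\mathrm{div}\,\mathbf{u}=0$; the diffusive term, using $q\ge 2$, produces at least the dissipation $\nu q\int |\mathbf{H}|^{q-2}|\nabla\mathbf{H}|^2\,dx$ (after discarding the nonnegative contribution proportional to $|\mathbf{H}|^{q-4}|\mathbf{H}\cdot\nabla\mathbf{H}|^2$).

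Second, I would integrate the stretching term by parts. Since $\mathrm{div}\,\mathbf{H}=0$, the divergence hits only $|\mathbf{H}|^{q-2}\mathbf{H}$, so the contribution from $(\mathbf{H}\cdot\nabla)\mathbf{u}$ is bounded in modulus by $C_q\int |\mathbf{u}|\,|\mathbf{H}|^{q-1}|\nabla\mathbf{H}|\,dx$. Splitting the integrand as $\bigl(|\mathbf{H}|^{(q-2)/2}|\nabla\mathbf{H}|\bigr)\cdot\bigl(|\mathbf{u}|\,|\mathbf{H}|^{q/2}\bigr)$ and applying Cauchy--Schwarz followed by Young's inequality absorbs half the dissipation and leaves
$$
\frac{d}{dt}\|\mathbf{H}\|_{L^q}^q + \tfrac{\nu q}{2}\int |\mathbf{H}|^{q-2}|\nabla\mathbf{H}|^2\,dx \;\le\; C_q\int |\mathbf{u}|^2|\mathbf{H}|^q\,dx.
$$

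Third, I would apply Lemma~\ref{weaksobolev} with $f=|\mathbf{u}|\in L^r_\omega$ and $g=|\mathbf{H}|^{q/2}\in H^1$, noting that $\|g\|_{L^2}^2=\|\mathbf{H}\|_{L^q}^q$ and $\|\nabla g\|_{L^2}^2\le (q/2)^2\int |\mathbf{H}|^{q-2}|\nabla\mathbf{H}|^2\,dx$. Choosing $\varepsilon$ small enough so that the $\|\nabla g\|_{L^2}^2$ contribution is swallowed by the diffusion yields
$$
\frac{d}{dt}\|\mathbf{H}\|_{L^q}^q + c_q\int |\mathbf{H}|^{q-2}|\nabla\mathbf{H}|^2\,dx \;\le\; C\bigl(\|\mathbf{u}\|_{L^r_\omega}^s+1\bigr)\|\mathbf{H}\|_{L^q}^q.
$$
Since \eqref{blowup} makes $t\mapsto\|\mathbf{u}(\cdot,t)\|_{L^r_\omega}^s$ integrable on $[0,T^*)$, and the initial datum lies in $L^q$ for every $q\in[2,\infty]$ (by $D^1_0\cap D^2\hookrightarrow L^6\cap L^\infty$ in $\mathbb{R}^3$), Gronwall's inequality together with a time integration closes both the $L^\infty_tL^q_x$ bound on $\mathbf{H}$ and the space--time bound on $|\mathbf{H}|^{q-2}|\nabla\mathbf{H}|^2$ stated in \eqref{hexin}.

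The main obstacle I anticipate is the stretching term: one has to integrate by parts so that the surviving gradient factor carries precisely the weight $|\mathbf{H}|^{(q-2)/2}$ needed for Young's inequality to balance against the diffusion; any other weight would fail to close. The upper bound $q\le 12$ in the statement is not forced by the argument above, but is the range that will be compatible with the Gagliardo--Nirenberg interpolations (typically at $q=12$, controlling $\|\mathbf{H}\|_{L^{12}}$ by $\|\nabla\mathbf{H}\|_{L^2}$ and the energy from Lemma~\ref{lem2}) used in the subsequent higher-order estimates.
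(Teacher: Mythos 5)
Your proposal is correct and follows essentially the same route as the paper: testing $\eqref{MHD_1}_4$ with $q|\mathbf{H}|^{q-2}\mathbf{H}$, killing the convective term via $\mathrm{div}\,\mathbf{u}=0$, absorbing the stretching term into half the dissipation by the splitting $\bigl(|\mathbf{H}|^{(q-2)/2}|\nabla\mathbf{H}|\bigr)\cdot\bigl(|\mathbf{u}|\,|\mathbf{H}|^{q/2}\bigr)$, and then invoking Lemma~\ref{weaksobolev} with $g=|\mathbf{H}|^{q/2}$ followed by Gronwall under \eqref{blowup}. Your closing remarks on the admissibility of $\mathbf{H}_0\in L^q$ and on why $q\le 12$ is a choice rather than a necessity are accurate and consistent with the paper's use of the lemma.
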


\begin{proof}
Multiplying $\eqref{MHD_1}_4$ by $q|\mathbf{H}|^{q-2}\mathbf{H}$ and integrating the resulting equation over $\mathbb{R}^3$, it follows from \eqref{weakembed} in Lemma \ref{weaksobolev} that
\begin{equation} \label{H1}
\begin{aligned}
\frac{d}{d t} & \int|\mathbf{H}|^{q} d x+\nu \int\left(q|\mathbf{H}|^{q-2}|\nabla \mathbf{H}|^{2}+ q(q-2)|\mathbf{H}|^{q-2}\left|\nabla |\mathbf{H}| \right|^2 \right) d x \\
=&-\int q|\mathbf{H}|^{q-2}\left(\mathbf{H}\cdot \nabla \mathbf{H} \cdot \mathbf{u}-\frac{q-1}{2} \mathbf{u} \cdot \nabla|\mathbf{H}|^{2}\right) d x \\
&-\frac{q(q-2)}{2} \int|\mathbf{H}|^{q-4}\left(\mathbf{H} \cdot \nabla|\mathbf{H}|^{2}\right)(\mathbf{u} \cdot \mathbf{H}) d x \\
\leq & \frac{\nu}{2} \int q|\mathbf{H}|^{q-2}|\nabla \mathbf{H}|^{2} d x+C q^{2} \int|\mathbf{u}|^{2}|\mathbf{H}|^{q} d x \\
= & \frac{\nu}{2} \int q|\mathbf{H}|^{q-2}|\nabla \mathbf{H}|^{2} d x+C q^{2} \left\||\mathbf{u}||\mathbf{H}|^{\frac{q}{2}}\right\|_{L^2}^2 \\
\leq & \frac{\nu}{2} \int q|\mathbf{H}|^{q-2}|\nabla \mathbf{H}|^{2} d x+ \varepsilon \|\nabla |\mathbf{H}|^{\frac{q}{2}} \|_{L^2}^2 + C(\varepsilon)(1+\|\mathbf{u}\|_{L^r_{\omega}}^s) \|\mathbf{H}\|_{L^q}^q. 
\end{aligned}
\end{equation}
Choosing $\varepsilon$ suitably small in \eqref{H1}, we obtain the desired \eqref{hexin} after applying Gronwall's inequality and \eqref{blowup}. Therefore the proof of Lemma \ref{hx} is completed.
\end{proof}

With the help of Lemma \ref{hx}, we can now derive key time-independent estimates on the $L^{\infty}(0, T; L^{2})$-norm of the gradients of velocity and magnetic field.

\begin{lemma} \label{nablauH}
Under the assumption \eqref{blowup}, it holds for all $0 \le  T < T^*$,
\begin{equation} \label{nablauH_1}
\sup_{0 \le t \le T} (\|\nabla \mathbf{u}\|_{L^2}^2 + \|\nabla \mathbf{H}\|_{L^2}^2) + \int_0^T \left(\|\sqrt{\rho}\mathbf{u}_t\|_{L^2}^2 + \|\mathbf{H}_t\|_{L^2}^2 + \|\nabla^2 \mathbf{u}\|_{L^2}^2 + \|\nabla^2 \mathbf{H}\|_{L^2}^2\right) dt \le C.
\end{equation}
\end{lemma}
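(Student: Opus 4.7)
The plan is to test the momentum equation $\eqref{MHD_1}_2$ against $\mathbf{u}_t$ and the induction equation $\eqref{MHD_1}_4$ against $\mathbf{H}_t$, derive a differential inequality for the natural energy $\|\nabla\mathbf{u}\|_{L^2}^2+\|\nabla\mathbf{H}\|_{L^2}^2$ (augmented by a cross term), and close it via Gronwall by exploiting the weak-Serrin assumption \eqref{blowup} through Lemma \ref{weaksobolev} and the $L^q$ control on $\mathbf{H}$ from Lemma \ref{hx}.

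First I would multiply $\eqref{MHD_1}_2$ by $\mathbf{u}_t$ and integrate by parts (using $\mathrm{div}\,\mathbf{u}=0$) to obtain
\begin{equation*}
\int\rho|\mathbf{u}_t|^2\,dx+\frac{\mu}{2}\frac{d}{dt}\|\nabla\mathbf{u}\|_{L^2}^2 = -\int\rho(\mathbf{u}\cdot\nabla)\mathbf{u}\cdot\mathbf{u}_t\,dx+\int(\mathbf{H}\cdot\nabla)\mathbf{H}\cdot\mathbf{u}_t\,dx,
\end{equation*}
and the analogous identity after testing $\eqref{MHD_1}_4$ with $\mathbf{H}_t$. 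The convective terms $\int\rho(\mathbf{u}\cdot\nabla)\mathbf{u}\cdot\mathbf{u}_t\,dx$ and $\int(\mathbf{u}\cdot\nabla)\mathbf{H}\cdot\mathbf{H}_t\,dx$ are controlled by Cauchy--Schwarz followed by Lemma \ref{weaksobolev} (applied with $f=\mathbf{u}$ and $g=\nabla\mathbf{u}$ or $g=\nabla\mathbf{H}$), producing bounds of the form
\begin{equation*}
\varepsilon\bigl(\|\sqrt{\rho}\mathbf{u}_t\|_{L^2}^2+\|\mathbf{H}_t\|_{L^2}^2\bigr)+C(\varepsilon)\bigl(1+\|\mathbf{u}\|_{L^r_\omega}^s\bigr)\bigl(\|\nabla\mathbf{u}\|_{L^2}^2+\|\nabla\mathbf{H}\|_{L^2}^2\bigr),
\end{equation*}
after invoking $\rho\in L^\infty$ from Lemma \ref{lem1}.

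The main obstacle is the Lorentz-force cross term $\int(\mathbf{H}\cdot\nabla)\mathbf{H}\cdot\mathbf{u}_t\,dx$, which cannot be bounded directly by $\sqrt{\rho}\mathbf{u}_t$ because $\rho$ may vanish. To neutralize it I would integrate by parts in $x$ (using $\mathrm{div}\,\mathbf{H}=0$) and extract a time derivative,
\begin{equation*}
\int(\mathbf{H}\cdot\nabla)\mathbf{H}\cdot\mathbf{u}_t\,dx = -\frac{d}{dt}\int(\mathbf{H}\otimes\mathbf{H}):\nabla\mathbf{u}\,dx+\int\partial_t(\mathbf{H}\otimes\mathbf{H}):\nabla\mathbf{u}\,dx,
\end{equation*}
move the $\frac{d}{dt}$-piece to the left-hand side, and redefine the energy as $\Phi(t):=\tfrac{\mu}{2}\|\nabla\mathbf{u}\|_{L^2}^2+\tfrac{\nu}{2}\|\nabla\mathbf{H}\|_{L^2}^2+\int(\mathbf{H}\otimes\mathbf{H}):\nabla\mathbf{u}\,dx$. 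Using $\|\mathbf{H}\|_{L^6}\le C$ from Lemma \ref{hx}, Young's inequality shows that $\Phi(t)$ is comparable to $\|\nabla\mathbf{u}\|_{L^2}^2+\|\nabla\mathbf{H}\|_{L^2}^2$ modulo an absorbable term. The residual $\int\partial_t(\mathbf{H}\otimes\mathbf{H}):\nabla\mathbf{u}\,dx$ is bounded by $C\|\mathbf{H}\|_{L^6}\|\mathbf{H}_t\|_{L^2}\|\nabla\mathbf{u}\|_{L^3}$ and, via Gagliardo--Nirenberg $\|\nabla\mathbf{u}\|_{L^3}\lesssim\|\nabla\mathbf{u}\|_{L^2}^{1/2}\|\nabla^2\mathbf{u}\|_{L^2}^{1/2}$ plus Young, is split between $\varepsilon\|\mathbf{H}_t\|_{L^2}^2$, $\varepsilon\|\nabla^2\mathbf{u}\|_{L^2}^2$, and a Gronwall-friendly remainder; the remaining coupling $\int(\mathbf{H}\cdot\nabla)\mathbf{u}\cdot\mathbf{H}_t\,dx$ is treated identically.

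To recover the dissipation terms $\|\nabla^2\mathbf{u}\|_{L^2}^2$ and $\|\nabla^2\mathbf{H}\|_{L^2}^2$ on the left, I would apply the Stokes estimate of Lemma \ref{stokeseq} to the momentum equation viewed as a Stokes system with source $\rho\mathbf{u}_t+\rho(\mathbf{u}\cdot\nabla)\mathbf{u}-(\mathbf{H}\cdot\nabla)\mathbf{H}$, together with the standard $L^2$ elliptic estimate for $\mathbf{H}$, yielding
\begin{equation*}
\|\nabla^2\mathbf{u}\|_{L^2}\le C\bigl(\|\sqrt{\rho}\mathbf{u}_t\|_{L^2}+\|\mathbf{u}\cdot\nabla\mathbf{u}\|_{L^2}+\|\mathbf{H}\cdot\nabla\mathbf{H}\|_{L^2}\bigr),\quad \|\nabla^2\mathbf{H}\|_{L^2}\le C\bigl(\|\mathbf{H}_t\|_{L^2}+\|\mathbf{u}\cdot\nabla\mathbf{H}\|_{L^2}+\|\mathbf{H}\cdot\nabla\mathbf{u}\|_{L^2}\bigr),
\end{equation*}
and the quadratic right-hand sides are again absorbed using Lemma \ref{weaksobolev} and Lemma \ref{hx}. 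Choosing $\varepsilon$ sufficiently small so that every $\sqrt{\rho}\mathbf{u}_t$, $\mathbf{H}_t$, $\nabla^2\mathbf{u}$, $\nabla^2\mathbf{H}$ factor on the right absorbs into the left, one obtains
\begin{equation*}
\frac{d}{dt}\Phi(t)+c\bigl(\|\sqrt{\rho}\mathbf{u}_t\|_{L^2}^2+\|\mathbf{H}_t\|_{L^2}^2+\|\nabla^2\mathbf{u}\|_{L^2}^2+\|\nabla^2\mathbf{H}\|_{L^2}^2\bigr)\le C\bigl(1+\|\mathbf{u}\|_{L^r_\omega}^s\bigr)\Phi(t),
\end{equation*}
so that Gronwall's inequality combined with \eqref{blowup} delivers \eqref{nablauH_1}.
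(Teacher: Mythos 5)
Your proposal is correct and follows essentially the same route as the paper: test the momentum equation with $\mathbf{u}_t$, absorb the Lorentz-force term into a modified energy via the time-derivative trick, recover $\|\nabla^2\mathbf{u}\|_{L^2}^2$ from the Stokes estimate of Lemma \ref{stokeseq}, and close with Lemma \ref{weaksobolev}, Lemma \ref{hx} and Gronwall. The only (immaterial) difference is in how the $\mathbf{H}$-dissipation is extracted: the paper squares the identity $\mathbf{H}_t-\nu\Delta\mathbf{H}=\mathbf{H}\cdot\nabla\mathbf{u}-\mathbf{u}\cdot\nabla\mathbf{H}$ in $L^2$, which produces $\frac{d}{dt}\|\nabla\mathbf{H}\|_{L^2}^2$, $\|\mathbf{H}_t\|_{L^2}^2$ and $\nu^2\|\Delta\mathbf{H}\|_{L^2}^2$ on the left in one stroke, whereas you test with $\mathbf{H}_t$ and then reinstate $\|\nabla^2\mathbf{H}\|_{L^2}^2$ through the elliptic estimate — the two are equivalent in substance.
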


\begin{proof}
Multiplying $\eqref{MHD_1}_2$ by $\mathbf{u}_t$ and integrating the resulting equation over $\mathbb{R}^3$ lead to
\begin{equation} \label{nablau}
\frac{\mu}{2}\frac{d}{dt}\int |\nabla \mathbf{u}|^2 dx + \int \rho |\mathbf{u}_t|^2 dx = -\int \rho \mathbf{u}\cdot \nabla \mathbf{u}\cdot \mathbf{u}_t dx + \int \mathbf{H} \cdot \nabla \mathbf{H} \cdot \mathbf{u}_t dx.
\end{equation}
And it follows from the equation $\eqref{MHD_1}_4$ that $\mathbf{H}_t-\nu \Delta \mathbf{H} = \mathbf{H}\cdot \nabla \mathbf{u}-\mathbf{u}\cdot \nabla \mathbf{H},$ then we have
\begin{equation} \label{magneto}
\int |\mathbf{H}_t-\nu \Delta \mathbf{H}|^2 dx = \int |\mathbf{u}\cdot \nabla \mathbf{H}- \mathbf{H}\cdot \nabla \mathbf{u}|^2 dx.
\end{equation}
For the L. H. S. of \eqref{magneto}, it is easy to get
\begin{equation} \label{magneto_1}
\begin{aligned}
\int |\mathbf{H}_t-\nu \Delta \mathbf{H}|^2 dx & = \int (|\mathbf{H}_t|^2 + \nu^2 |\Delta \mathbf{H}|^2 - 2\nu \mathbf{H}_t \cdot \Delta \mathbf{H}) dx \\
& = \nu \frac{d}{dt} \int |\nabla \mathbf{H}|^2 dx + \int (|\mathbf{H}_t|^2 + \nu^2 |\Delta \mathbf{H}|^2) dx.
\end{aligned}
\end{equation}
Substituting \eqref{magneto_1} into the L. H. S. of \eqref{magneto}, we obtain that
\begin{equation} \label{nablaH}
\nu \frac{d}{dt} \int |\nabla \mathbf{H}|^2 dx + \int (|\mathbf{H}_t|^2 + \nu^2 |\Delta \mathbf{H}|^2) dx = \int |\mathbf{u}\cdot \nabla \mathbf{H}- \mathbf{H}\cdot \nabla \mathbf{u}|^2 dx.
\end{equation}

Notice that the standard $L^2$-estimate of elliptic system gives 
\begin{equation} \label{elli}
\|\nabla^2 \mathbf{H} \|_{L^2}^2 \le K \|\Delta \mathbf{H}\|_{L^2}^2,
\end{equation}
with a positive constant $K$.

Adding \eqref{nablau} to \eqref{nablaH}, we derive from Cauchy-Schwarz inequality and \eqref{elli} that
\begin{equation}
\begin{aligned}
& \frac{d}{dt} \left(\frac{\mu}{2} |\nabla \mathbf{u}|^2 + \nu |\nabla \mathbf{H}|^2 \right) dx + \int \left(\rho |\mathbf{u}_t|^2 + |\mathbf{H}_t|^2 + \frac{\nu^2}{K} |\nabla^2 \mathbf{H}|^2 \right) dx \\
= & \int \mathbf{H} \cdot \nabla \mathbf{H} \cdot \mathbf{u}_t dx -\int \rho \mathbf{u} \cdot \nabla \mathbf{u} \cdot \mathbf{u}_t dx  + \int |\mathbf{u}\cdot \nabla \mathbf{H}- \mathbf{H}\cdot \nabla \mathbf{u}|^2 dx \\
= & - \frac{d}{dt} \int (\mathbf{H}\cdot \nabla) \mathbf{u}\cdot \mathbf{H} dx + \int (\mathbf{H}_t \cdot \nabla) \mathbf{u}\cdot \mathbf{H}_t dx + \int (\mathbf{H} \cdot \nabla) \mathbf{u} \cdot \mathbf{H}_t dx \\
& -\int \rho \mathbf{u}\cdot \nabla \mathbf{u} \cdot \mathbf{u}_t dx + \int |\mathbf{u}\cdot \nabla \mathbf{H}- \mathbf{H}\cdot \nabla \mathbf{u}|^2 dx \\
\le & -\frac{d}{dt} \int (\mathbf{H}\cdot \nabla) \mathbf{u} \cdot \mathbf{H} dx +\frac{1}{2} \int \rho|\mathbf{u}_t|^2  dx + \frac{1}{2} \int |\mathbf{H}_t|^2 dx \\
& + C \int |\sqrt{\rho} \mathbf{u} \cdot \nabla \mathbf{u}|^2 dx + C \int |\mathbf{u} \cdot \nabla \mathbf{H}|^2 dx + C \int |\mathbf{H} \cdot \nabla \mathbf{u}|^2 dx.
\end{aligned}
\end{equation}
Thus, we obtain that
\begin{equation} \label{energy2u}
\begin{aligned}
& \frac{d}{dt} \int \left(\mu |\nabla \mathbf{u}|^2 + 2\nu |\nabla \mathbf{H}|^2 + 2(\mathbf{H}\cdot \nabla)\mathbf{u}\cdot \mathbf{H}\right) dx + \int \left(\rho |\mathbf{u}_t|^2 + |\mathbf{H}_t|^2 + \frac{2\nu^2}{K} |\nabla^2 \mathbf{H}|^2 \right) dx \\
\le & C \int |\sqrt{\rho} \mathbf{u}\cdot \nabla \mathbf{u}|^2 dx + C \int |\mathbf{u} \cdot \nabla \mathbf{H}|^2 dx + C \int |\mathbf{H} \cdot \nabla \mathbf{u}|^2 dx.
\end{aligned}
\end{equation}
Recall that $(\mathbf{u}, P)$ satisfies the following Stokes system
\begin{equation}
\begin{cases}
-\mu \Delta \mathbf{u}+\nabla P=-\rho \mathbf{u}_t -\rho \mathbf{u}\cdot \nabla \mathbf{u} + \mathbf{H}\cdot \nabla \mathbf{H}, & x \in \mathbb{R}^3, \\ \operatorname{div} \mathbf{u} =0, & x \in \mathbb{R}^3, \\ \mathbf{u}(x) \rightarrow 0,  & |x| \rightarrow \infty. 
\end{cases}
\end{equation}
Applying Lemma \ref{stokeseq} with $\mathbf{F} \triangleq -\rho \mathbf{u}_t -\rho \mathbf{u}\cdot \nabla \mathbf{u} + \mathbf{H}\cdot \nabla \mathbf{H}$, we obtain from \eqref{stokesin} that
\begin{equation} \label{nabla2u}
\begin{aligned}
\|\nabla^2 \mathbf{u}\|_{L^2}^2 & \le C(\|\rho \mathbf{u}_t\|_{L^2}^2 + \|\rho \mathbf{u}\cdot \nabla \mathbf{u}\|_{L^2}^2 + \|\mathbf{H}\cdot \nabla \mathbf{H}\|_{L^2}^2) \\
& \le L(\|\sqrt{\rho} \mathbf{u}_t\|_{L^2}^2 + \|\sqrt{\rho} \mathbf{u}\cdot \nabla \mathbf{u}\|_{L^2}^2 + \|\mathbf{H}\cdot \nabla \mathbf{H}\|_{L^2}^2),
\end{aligned}
\end{equation}
where $L$ is a positive constant depending only on $\mu$ and $\|\rho_0\|_{L^{\infty}}.$ 

Adding \eqref{nabla2u} multiplied by $\frac{1}{2L}$ to \eqref{energy2u}, we have
\begin{equation} \label{energy3u}
\begin{aligned}
& \frac{d}{dt} \int \left(\mu |\nabla \mathbf{u}|^2 + 2\nu |\nabla \mathbf{H}|^2 + 2(\mathbf{H}\cdot \nabla)\mathbf{u}\cdot \mathbf{H} \right) dx  \\
& + \int \left(\frac{1}{2}\rho |\mathbf{u}_t|^2 + |\mathbf{H}_t|^2 + \frac{2\nu^2}{K} |\nabla^2 \mathbf{H}|^2 + \frac{1}{2L}\|\nabla^2 \mathbf{u}\|_{L^2}^2 \right) dx \\
\le  & C \int \left( |\sqrt{\rho} \mathbf{u} \cdot \nabla \mathbf{u}|^2  +  |\mathbf{u} \cdot \nabla \mathbf{H}|^2  + |\mathbf{H} \cdot \nabla \mathbf{u}|^2 +  |\mathbf{H} \cdot \nabla \mathbf{H}|^2 \right)dx \\
\le & C \|\rho\|_{L^{\infty}}\|\mathbf{u} \cdot \nabla \mathbf{u}\|_{L^2}^2 + C \|\mathbf{u} \cdot \nabla \mathbf{H}\|_{L^2}^2 \\
& + C \|\mathbf{H}\|_{L^6}^2 \|\nabla \mathbf{u}\|_{L^2} \|\nabla \mathbf{u}\|_{L^6} + C \|\mathbf{H}\|_{L^6}^2 \|\nabla \mathbf{H}\|_{L^2} \|\nabla \mathbf{H}\|_{L^6} \\
\le & \frac{\varepsilon}{2} (\|\nabla^2 \mathbf{u}\|_{L^2}^2 + \|\nabla^2 \mathbf{H}\|_{L^2}^2) + C(\varepsilon) (1+ \|\mathbf{u}\|_{L^r_{\omega}}^s) (\|\nabla \mathbf{u}\|_{L^2}^2 + \|\nabla \mathbf{H}\|_{L^2}^2) \\
& + C \|\mathbf{H}\|_{L^6}^2\|\nabla \mathbf{u}\|_{L^2} \|\nabla^2 \mathbf{u}\|_{L^2} + C \|\mathbf{H}\|_{L^6}^2\|\nabla \mathbf{H}\|_{L^2} \|\nabla^2 \mathbf{H}\|_{L^2} \\
\le & \varepsilon(\|\nabla^2 \mathbf{u}\|_{L^2}^2 + \|\nabla^2 \mathbf{H}\|_{L^2}^2) + C(\varepsilon) (1+ \|\mathbf{u}\|_{L^r_{\omega}}^s) (\|\nabla \mathbf{u}\|_{L^2}^2 + \|\nabla \mathbf{H}\|_{L^2}^2),
\end{aligned}
\end{equation}
due to Lemma \ref{weaksobolev}, \eqref{hexin} and Cauchy-Schwarz inequality.

Moreover, applying the Cauchy-Schwarz inequality and \eqref{hexin} gives rise to
\begin{equation}
\begin{aligned}
2 \int (\mathbf{H} \cdot \nabla)\mathbf{u} \cdot \mathbf{H} dx & \le C\|\mathbf{H}\|_{L^4}^2 \|\nabla \mathbf{u}\|_{L^2} \\
& \le \frac{\mu}{2} \|\nabla \mathbf{u}\|_{L^2}^2 + C \|\mathbf{H}\|_{L^4}^4 \\
& \le \frac{\mu}{2} \|\nabla \mathbf{u}\|_{L^2}^2 + C.
\end{aligned}
\end{equation}
Together with Gronwall's inequality, \eqref{energy3u} implies
\begin{equation}
\sup_{0 \le t \le T} (\|\nabla \mathbf{u}\|_{L^2}^2 + \|\nabla \mathbf{H}\|_{L^2}^2) + \int_0^T \left(\|\sqrt{\rho}\mathbf{u}_t\|_{L^2}^2 + \|\mathbf{H}_t\|_{L^2}^2 + \|\nabla^2 \mathbf{u}\|_{L^2}^2 + \|\nabla^2 \mathbf{H}\|_{L^2}^2\right) dt \le C.
\end{equation}
Therefore the proof of Lemma \ref{nablauH} is completed.
\end{proof}

The following lemma concerns the higher regularity of the temperature $\theta$.
\begin{lemma} \label{theta}
Under the assumption \eqref{blowup}, it holds that for $0 \le  T < T^*,$
\begin{equation}
\sup_{0 \le t \le T} \|\sqrt{\rho} \theta\|_{L^2}^2 + \int_0^T \|\nabla \theta\|_{L^2}^2 dt  \le C.
\end{equation}
\end{lemma}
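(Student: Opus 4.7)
My plan is the standard energy estimate for the temperature equation, using the bounds from Lemmas \ref{lem1}, \ref{lem2}, \ref{hx} and especially Lemma \ref{nablauH}, together with the Sobolev embedding $D^1_0 \hookrightarrow L^6$ in place of Poincar\'e.

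The first step is to multiply $\eqref{MHD_1}_3$ by $\theta$ and integrate over $\mathbb{R}^3$. Using $\eqref{MHD_1}_1$, $\mathrm{div}\,\mathbf{u}=0$, and integration by parts, the convective terms combine to yield
\begin{equation*}
\frac{c_v}{2}\frac{d}{dt}\int \rho\theta^2\,dx + \kappa\int|\nabla\theta|^2\,dx = \int\bigl(2\mu|\mathfrak{D}(\mathbf{u})|^2 + \nu|\nabla\times\mathbf{H}|^2\bigr)\theta\,dx.
\end{equation*}
The main task is to control the source terms on the right-hand side. Because we work on the whole space there is no Poincar\'e inequality for $\theta$, but since $\theta \in D^1_0$ we have the Sobolev embedding $\|\theta\|_{L^6}\le C\|\nabla\theta\|_{L^2}$, which plays the analogous role.

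Next I would apply H\"older's inequality with exponents $(6/5, 6)$ and bound
\begin{equation*}
\int|\nabla\mathbf{u}|^2\theta\,dx \le \|\nabla\mathbf{u}\|_{L^{12/5}}^2\|\theta\|_{L^6} \le C\|\nabla\mathbf{u}\|_{L^2}^{3/2}\|\nabla^2\mathbf{u}\|_{L^2}^{1/2}\|\nabla\theta\|_{L^2},
\end{equation*}
where in the last step I used the Gagliardo--Nirenberg interpolation $\|\nabla\mathbf{u}\|_{L^{12/5}}\le C\|\nabla\mathbf{u}\|_{L^2}^{3/4}\|\nabla^2\mathbf{u}\|_{L^2}^{1/4}$. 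The magnetic term $\int|\nabla\times\mathbf{H}|^2\theta\,dx$ is treated identically with $\mathbf{u}$ replaced by $\mathbf{H}$. Then Young's inequality gives
\begin{equation*}
\frac{d}{dt}\|\sqrt{\rho}\theta\|_{L^2}^2 + \tfrac{\kappa}{c_v}\|\nabla\theta\|_{L^2}^2 \le C\bigl(\|\nabla\mathbf{u}\|_{L^2}^3\|\nabla^2\mathbf{u}\|_{L^2} + \|\nabla\mathbf{H}\|_{L^2}^3\|\nabla^2\mathbf{H}\|_{L^2}\bigr),
\end{equation*}
where the $\|\nabla\theta\|_{L^2}^2$ coming out of Young has been absorbed on the left.

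Finally I would integrate in $t$ on $[0,T]$. The initial datum is finite since $\|\sqrt{\rho_0}\theta_0\|_{L^2}^2 \le \|\rho_0\|_{L^{3/2}}\|\theta_0\|_{L^6}^2 <\infty$ by \eqref{RC} and Sobolev. For the time integral on the right, by Cauchy--Schwarz and Lemma \ref{nablauH},
\begin{equation*}
\int_0^T \|\nabla\mathbf{u}\|_{L^2}^3\|\nabla^2\mathbf{u}\|_{L^2}\,dt \le \Bigl(\sup_{[0,T]}\|\nabla\mathbf{u}\|_{L^2}^3\Bigr) T^{1/2}\Bigl(\int_0^T\|\nabla^2\mathbf{u}\|_{L^2}^2\,dt\Bigr)^{1/2} \le C,
\end{equation*}
and similarly for $\mathbf{H}$. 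This directly yields the stated bound on $\sup_t\|\sqrt{\rho}\theta\|_{L^2}^2 + \int_0^T\|\nabla\theta\|_{L^2}^2\,dt$. No Gronwall step is required because $\theta$ does not reappear on the right after absorption. The only slightly delicate point is the choice of Sobolev/interpolation exponents to match the regularity already available from Lemma \ref{nablauH}; with the pair $(12/5,6)$ everything closes without invoking any further regularity of $\mathbf{u}$ or $\mathbf{H}$.
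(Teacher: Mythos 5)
Your argument is correct and follows essentially the same route as the paper: multiply $\eqref{MHD_1}_3$ by $\theta$, use H\"older with exponents $(6/5,6)$ and the interpolation $\|\nabla\mathbf{u}\|_{L^{12/5}}\le C\|\nabla\mathbf{u}\|_{L^2}^{3/4}\|\nabla^2\mathbf{u}\|_{L^2}^{1/4}$, absorb $\|\nabla\theta\|_{L^2}^2$ by Young, and close with the bounds of Lemma \ref{nablauH}. The only cosmetic difference is that the paper first uses $\sup_t\|\nabla\mathbf{u}\|_{L^2}\le C$ to reduce the right-hand side to $C\|\nabla^2\mathbf{u}\|_{L^2}^2+C$ before integrating, whereas you integrate the product $\|\nabla\mathbf{u}\|_{L^2}^3\|\nabla^2\mathbf{u}\|_{L^2}$ directly via Cauchy--Schwarz in time; both are equivalent.
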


\begin{proof}
Multiplying $\eqref{MHD_1}_3$ by $\theta$ and integrating the resulting equation over $\mathbb{R}^3$ yield
\begin{equation} \label{theta1}
\begin{aligned}
c_v \frac{d}{dt} \int \rho\theta^2 dx + 2\kappa \int |\nabla \theta|^2 dx \le C \int |\nabla \mathbf{u}|^2 \theta dx + C \int |\nabla \mathbf{H}|^2 \theta dx.
\end{aligned}
\end{equation}
We estimate each term of R. H. S. of \eqref{theta1} as follows. Applying H\"{o}lder and Cauchy-Schwarz inequalities gives
\begin{equation} \label{es1}
\begin{aligned}
\int |\nabla \mathbf{u}|^2 \theta dx & \le C \|\nabla \mathbf{u}\|_{L^{\frac{12}{5}}}^2 \|\theta\|_{L^6} \\
& \le C \|\nabla \mathbf{u}\|_{L^2}^{\frac{3}{2}} \|\nabla \mathbf{u}\|_{L^6}^{\frac{1}{2}}\|\nabla \theta\|_{L^2} \\
& \le \frac{\kappa}{2} \|\nabla \theta\|_{L^2}^2 + C \|\nabla \mathbf{u}\|_{L^2}^3 \|\nabla^2 \mathbf{u}\|_{L^2} \\
& \le \frac{\kappa}{2} \|\nabla \theta\|_{L^2}^2 + C \|\nabla^2 \mathbf{u}\|_{L^2}^2 + C,
\end{aligned}
\end{equation}
due to \eqref{nablauH_1}. And in a similar way, we have
\begin{equation} \label{es2}
\begin{aligned}
\int |\nabla \mathbf{H}|^2 \theta dx & \le C \|\nabla \mathbf{H}\|_{L^{\frac{12}{5}}}^2 \|\theta\|_{L^6} \\
& \le C \|\nabla \mathbf{H}\|_{L^2}^{\frac{3}{2}} \|\nabla \mathbf{H}\|_{L^6}^{\frac{1}{2}}\|\nabla \theta\|_{L^2} \\
& \le \frac{\kappa}{2} \|\nabla \theta\|_{L^2}^2 + C \|\nabla \mathbf{H}\|_{L^2}^3 \|\nabla^2 \mathbf{H}\|_{L^2} \\
& \le \frac{\kappa}{2} \|\nabla \theta\|_{L^2}^2 + C \|\nabla^2 \mathbf{H}\|_{L^2}^2 + C.
\end{aligned}
\end{equation}
Substituting \eqref{es1} and \eqref{es2} into \eqref{theta1}, we obtain
\begin{equation} \label{theta2}
\begin{aligned}
c_v \frac{d}{dt} \int \rho\theta^2 dx + \kappa \int |\nabla \theta|^2 dx \le C \|\nabla^2 \mathbf{u}\|_{L^2}^2 + C \|\nabla^2 \mathbf{H}\|_{L^2}^2 + C.
\end{aligned}
\end{equation}
Integrating the above inequality \eqref{theta2} with respect to the time variable over $(0, t)$, we get from \eqref{nablauH_1} that
\begin{equation}
\begin{aligned}
\sup_{0 \le t \le T} \|\sqrt{\rho} \theta\|_{L^2}^2 + \int_0^T \|\nabla \theta\|_{L^2}^2 dt  \le C.
\end{aligned}
\end{equation}
Therefore the proof of Lemma \ref{theta} is completed.
\end{proof}

At the end of this subsection, we give the following remark which will be used later.
\begin{remark}
In view of Lemma \ref{nablauH}, we deduce from classical $L^2$-estimates for elliptic and Stokes system that
\begin{equation} \label{u22}
\begin{aligned}
\|\nabla^2 \mathbf{u}\|_{L^2}^2 & \le C(\|\sqrt{\rho} \mathbf{u}_t\|_{L^2}^2 + \|\mathbf{u} \cdot \nabla \mathbf{u}\|_{L^2}^2 + \|\mathbf{H} \cdot \nabla \mathbf{H}\|_{L^2}^2) \\
& \le C(\|\sqrt{\rho} \mathbf{u}_t\|_{L^2}^2 + \|\mathbf{u}\|_{L^6}^2 \|\nabla \mathbf{u}\|_{L^3}^2 + \|\mathbf{H}\|_{L^6}^2  \|\nabla \mathbf{H}\|_{L^3}^2) \\
& \le C(\|\sqrt{\rho} \mathbf{u}_t\|_{L^2}^2 + \|\nabla \mathbf{u} \|_{L^2}^3 \|\nabla ^2 \mathbf{u}\|_{L^2} + \|\nabla \mathbf{H}\|_{L^2}^3 \|\nabla^2 \mathbf{H}\|_{L^2}) \\
& \le \frac{1}{4}(\|\nabla^2 \mathbf{u}\|_{L^2}^2 + \|\nabla^2 \mathbf{H}\|_{L^2}^2 ) + C\|\sqrt{\rho} \mathbf{u}_t\|_{L^2}^2 + C \|\nabla \mathbf{u} \|_{L^2}^6  + C \|\nabla \mathbf{H}\|_{L^2}^6,
\end{aligned}
\end{equation}
and 
\begin{equation} \label{H22}
\begin{aligned}
\|\nabla^2 \mathbf{H}\|_{L^2}^2 & \le C(\|\mathbf{H}_t\|_{L^2}^2 + \|\mathbf{u} \cdot \nabla \mathbf{H}\|_{L^2}^2 + \|\mathbf{H} \cdot \nabla \mathbf{u}\|_{L^2}^2) \\
& \le C(\|\mathbf{H}_t\|_{L^2}^2 + \|\mathbf{u} \|_{L^6}^2 \|\nabla \mathbf{H}\|_{L^3}^2 + \|\mathbf{H}\|_{L^6}^2  \|\nabla \mathbf{u}\|_{L^3}^2) \\
& \le C(\|\mathbf{H}_t\|_{L^2}^2 + \|\nabla \mathbf{u} \|_{L^2}^2 \|\nabla \mathbf{H}\|_{L^2} \|\nabla ^2 \mathbf{H}\|_{L^2} + \|\nabla \mathbf{H}\|_{L^2}^2 \|\nabla \mathbf{u}\|_{L^2} \|\nabla^2 \mathbf{u}\|_{L^2}) \\
& \le \frac{1}{4}(\|\nabla^2 \mathbf{u}\|_{L^2}^2 + \|\nabla^2 \mathbf{H}\|_{L^2}^2 ) + C\|\mathbf{H}_t\|_{L^2}^2 + C \|\nabla \mathbf{u} \|_{L^2}^6  + C \|\nabla \mathbf{H}\|_{L^2}^6.
\end{aligned}
\end{equation}
Adding \eqref{u22} to \eqref{H22}, it follows from \eqref{nablauH_1} that
\begin{equation} \label{H22}
\begin{aligned}
\|\nabla^2 \mathbf{u}\|_{L^2}^2 + \|\nabla^2 \mathbf{H}\|_{L^2}^2 & \le C(\|\sqrt{\rho} \mathbf{u}_t\|_{L^2}^2 + \|\mathbf{H}_t\|_{L^2}^2) + C.
\end{aligned}
\end{equation}

\end{remark}

\subsection{Higher-order estimates}

In this subsection, we will estimate a series of higher-order estimates of $(\rho, \mathbf{u}, \mathbf{H},\theta).$

Firstly, we will estimate the $L^{\infty}(0, T; L^2)$-norm of $\sqrt{\rho}\mathbf{u}_t, \mathbf{H}_t$ and $\nabla \theta.$

\begin{lemma} \label{rhotheta}
Under the assumption \eqref{blowup}, it holds that for any $0 \le T < T^*,$
\begin{equation} \label{rhothetaeq}
\begin{aligned}
\sup_{0 \le t \le T} (\|\sqrt{\rho}\mathbf{u}_t \|_{L^2}^2 + \|\mathbf{H}_t\|_{L^2}^2 + \|\nabla \theta \|_{L^2}^2) + \int_0^T (\|\nabla \mathbf{u}_t\|_{L^2}^2 + \|\nabla \mathbf{H}_t\|_{L^2}^2 + \|\sqrt{\rho}\theta_t\|_{L^2}^2)dt \le C.
\end{aligned}
\end{equation}
\end{lemma}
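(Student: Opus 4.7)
The plan is to produce three coupled differential inequalities, one for $\|\sqrt{\rho}\mathbf{u}_t\|_{L^2}^2$, one for $\|\mathbf{H}_t\|_{L^2}^2$, and one for $\|\nabla\theta\|_{L^2}^2$, then sum them and close via Gronwall, using the a priori bounds \eqref{rho_l^p}, \eqref{elementary}, \eqref{hexin}, \eqref{nablauH_1} together with the $L^2$-regularity bound \eqref{H22}.

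First, I would differentiate $\eqref{MHD_1}_2$ in $t$ to obtain
\begin{equation*}
\rho \mathbf{u}_{tt} + \rho \mathbf{u}\cdot\nabla\mathbf{u}_t - \mu\Delta\mathbf{u}_t + \nabla P_t = -\rho_t(\mathbf{u}_t+\mathbf{u}\cdot\nabla\mathbf{u}) - \rho\mathbf{u}_t\cdot\nabla\mathbf{u} + \mathbf{H}_t\cdot\nabla\mathbf{H} + \mathbf{H}\cdot\nabla\mathbf{H}_t,
\end{equation*}
test this identity against $\mathbf{u}_t$, and use $\rho_t=-\mathrm{div}(\rho\mathbf{u})$ together with $\mathrm{div}\,\mathbf{u}_t=0$ to rewrite the $\rho_t$--contributions. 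This yields
\begin{equation*}
\tfrac{1}{2}\tfrac{d}{dt}\!\int\rho|\mathbf{u}_t|^2\,dx + \mu\!\int|\nabla\mathbf{u}_t|^2\,dx = \sum_i I_i,
\end{equation*}
where the $I_i$ collect terms of the type $\int\rho|\mathbf{u}||\mathbf{u}_t||\nabla\mathbf{u}_t|$, $\int\rho|\mathbf{u}|^2|\nabla\mathbf{u}||\nabla\mathbf{u}_t|$, $\int\rho|\mathbf{u}||\mathbf{u}_t||\nabla\mathbf{u}|^2$, $\int|\mathbf{H}||\mathbf{H}_t||\nabla\mathbf{u}_t|$ and $\int|\mathbf{H}||\nabla\mathbf{H}||\mathbf{u}_t|$. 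Each of these is handled by H\"older, Gagliardo--Nirenberg \eqref{GN1} and Cauchy--Schwarz: for instance $\int\rho|\mathbf{u}||\mathbf{u}_t||\nabla\mathbf{u}_t|\le C\|\mathbf{u}\|_{L^6}\|\sqrt{\rho}\mathbf{u}_t\|_{L^2}^{1/2}\|\mathbf{u}_t\|_{L^6}^{1/2}\|\nabla\mathbf{u}_t\|_{L^2}$, which after absorbing $\varepsilon\|\nabla\mathbf{u}_t\|_{L^2}^2$ leaves a factor controlled by $\|\nabla\mathbf{u}\|_{L^2}$ and $\|\sqrt{\rho}\mathbf{u}_t\|_{L^2}^2$.

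Next, I would rewrite $\eqref{MHD_1}_4$ as $\mathbf{H}_t-\nu\Delta\mathbf{H}=\mathbf{H}\cdot\nabla\mathbf{u}-\mathbf{u}\cdot\nabla\mathbf{H}$, differentiate in $t$, and test with $\mathbf{H}_t$:
\begin{equation*}
\tfrac{1}{2}\tfrac{d}{dt}\!\int|\mathbf{H}_t|^2\,dx + \nu\!\int|\nabla\mathbf{H}_t|^2\,dx = \int\bigl(\mathbf{H}_t\cdot\nabla\mathbf{u}+\mathbf{H}\cdot\nabla\mathbf{u}_t-\mathbf{u}_t\cdot\nabla\mathbf{H}-\mathbf{u}\cdot\nabla\mathbf{H}_t\bigr)\cdot\mathbf{H}_t\,dx.
\end{equation*}
The dangerous term is $\int\mathbf{H}\cdot\nabla\mathbf{u}_t\cdot\mathbf{H}_t\,dx$, which by H\"older and Gagliardo--Nirenberg is $\le\|\mathbf{H}\|_{L^6}\|\nabla\mathbf{u}_t\|_{L^2}\|\mathbf{H}_t\|_{L^3}\le\varepsilon\|\nabla\mathbf{u}_t\|_{L^2}^2+\varepsilon\|\nabla\mathbf{H}_t\|_{L^2}^2+C\|\mathbf{H}_t\|_{L^2}^2$ after using \eqref{hexin} and interpolating $\|\mathbf{H}_t\|_{L^3}\le C\|\mathbf{H}_t\|_{L^2}^{1/2}\|\nabla\mathbf{H}_t\|_{L^2}^{1/2}$. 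The remaining pieces are treated analogously, producing contributions absorbable into the dissipation and a right-hand side involving $\|\sqrt{\rho}\mathbf{u}_t\|_{L^2}^2$, $\|\mathbf{H}_t\|_{L^2}^2$ and quantities already bounded by \eqref{nablauH_1}.

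Adding these two inequalities, choosing $\varepsilon$ small, and using \eqref{H22} to close $\|\nabla^2\mathbf{u}\|_{L^2}^2+\|\nabla^2\mathbf{H}\|_{L^2}^2$ in terms of $\|\sqrt{\rho}\mathbf{u}_t\|_{L^2}^2+\|\mathbf{H}_t\|_{L^2}^2$, I arrive at an inequality of the form
\begin{equation*}
\tfrac{d}{dt}\bigl(\|\sqrt{\rho}\mathbf{u}_t\|_{L^2}^2+\|\mathbf{H}_t\|_{L^2}^2\bigr) + \tfrac{\mu}{2}\|\nabla\mathbf{u}_t\|_{L^2}^2 + \tfrac{\nu}{2}\|\nabla\mathbf{H}_t\|_{L^2}^2 \le \Phi(t)\bigl(\|\sqrt{\rho}\mathbf{u}_t\|_{L^2}^2+\|\mathbf{H}_t\|_{L^2}^2\bigr) + \Psi(t),
\end{equation*}
where $\Phi,\Psi\in L^1(0,T)$ thanks to the previous lemmas and the weak-Serrin bound \eqref{blowup} via Lemma \ref{weaksobolev}. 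Finite initial values at $t=0$ follow from the compatibility conditions \eqref{CC1}--\eqref{CC2}: indeed, at $t=0$ we have $\sqrt{\rho_0}\mathbf{u}_t|_{t=0}=\mathbf{g}_1$ on $\{\rho_0>0\}$ and $\mathbf{H}_t|_{t=0}=\nu\Delta\mathbf{H}_0+\mathbf{H}_0\cdot\nabla\mathbf{u}_0-\mathbf{u}_0\cdot\nabla\mathbf{H}_0\in L^2$. Gronwall then yields the first two bounds in \eqref{rhothetaeq}.

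For the temperature, test $\eqref{MHD_1}_3$ against $\theta_t$ to obtain
\begin{equation*}
\tfrac{\kappa}{2}\tfrac{d}{dt}\!\int|\nabla\theta|^2\,dx + c_v\!\int\rho\theta_t^2\,dx = -c_v\!\int\rho\mathbf{u}\cdot\nabla\theta\,\theta_t\,dx + \int\bigl(2\mu|\mathfrak{D}(\mathbf{u})|^2+\nu|\nabla\times\mathbf{H}|^2\bigr)\theta_t\,dx.
\end{equation*}
For the convection term I split $\sqrt{\rho}\theta_t$ off and bound the rest by $\|\mathbf{u}\|_{L^6}\|\nabla\theta\|_{L^2}$; for the source terms I use H\"older and Gagliardo--Nirenberg, e.g.
\begin{equation*}
\int|\nabla\mathbf{u}|^2|\theta_t|\,dx \le \|\nabla\mathbf{u}\|_{L^3}\|\nabla\mathbf{u}\|_{L^2}\|\theta_t\|_{L^6} \le C\|\nabla\mathbf{u}\|_{L^2}^{3/2}\|\nabla^2\mathbf{u}\|_{L^2}^{1/2}\|\nabla\theta_t\|_{L^2},
\end{equation*}
but since $\theta_t$ has no Laplacian dissipation here, one first absorbs $\varepsilon\|\sqrt{\rho}\theta_t\|_{L^2}^2$ and estimates $\|\theta_t\|_{L^6}$ via the elliptic identity $c_v\rho\theta_t=\kappa\Delta\theta-c_v\rho\mathbf{u}\cdot\nabla\theta+2\mu|\mathfrak{D}(\mathbf{u})|^2+\nu|\nabla\times\mathbf{H}|^2$, which controls $\|\nabla\theta_t\|_{L^2}$ through $\|\nabla^2\theta\|_{L^2}$, already in $L^2_t$. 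The output is again an inequality that Gronwall closes using the bounds just obtained for $\sqrt{\rho}\mathbf{u}_t,\mathbf{H}_t$.

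The main obstacle is the cross term $\int\mathbf{H}\cdot\nabla\mathbf{u}_t\cdot\mathbf{H}_t\,dx$ in the magnetic estimate, which couples the two highest-order dissipative quantities $\|\nabla\mathbf{u}_t\|_{L^2}$ and $\|\nabla\mathbf{H}_t\|_{L^2}$; it is precisely here that the $L^\infty(0,T;L^6)$ control of $\mathbf{H}$ furnished by Lemma \ref{hx} is essential to get both factors absorbed into the dissipation with only an $\|\mathbf{H}_t\|_{L^2}^2$ lower-order residue.
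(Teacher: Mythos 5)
Your treatment of the $\mathbf{u}_t$ and $\mathbf{H}_t$ estimates is essentially the paper's argument and is sound: differentiate $\eqref{MHD_1}_2$ and $\eqref{MHD_1}_4$ in time, test with $\mathbf{u}_t$ and $\mathbf{H}_t$, and close with Gagliardo--Nirenberg, \eqref{hexin}, \eqref{nablauH_1} and \eqref{H22}. (The paper in fact disposes of the cross terms $\int(\mathbf{H}\cdot\nabla)\mathbf{H}_t\cdot\mathbf{u}_t\,dx+\int(\mathbf{H}\cdot\nabla)\mathbf{u}_t\cdot\mathbf{H}_t\,dx$ by an exact cancellation using $\mathrm{div}\,\mathbf{H}=0$, whereas you absorb the surviving term into both dissipations; either works.)

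The temperature step, however, has a genuine gap. To bound $\int(2\mu|\mathfrak{D}(\mathbf{u})|^2+\nu|\nabla\times\mathbf{H}|^2)\theta_t\,dx$ you invoke $\|\theta_t\|_{L^6}$, i.e.\ $\|\nabla\theta_t\|_{L^2}$, and propose to recover it from the identity $c_v\rho\theta_t=\kappa\Delta\theta-c_v\rho\mathbf{u}\cdot\nabla\theta+2\mu|\mathfrak{D}(\mathbf{u})|^2+\nu|\nabla\times\mathbf{H}|^2$. That identity only controls the \emph{weighted} quantity $\rho\theta_t$; since vacuum is allowed, $\|\rho\theta_t\|_{L^2}$ gives no control of $\theta_t$ itself, and in no case does it control $\|\nabla\theta_t\|_{L^2}$ (differentiating the identity would require $\nabla\rho\,\theta_t$ and third derivatives of $\mathbf{u}$, none of which are available here; the bound $\int_0^T\|\nabla\theta_t\|_{L^2}^2\,dt\le C$ is only proved in the \emph{subsequent} lemma, and its proof uses the present lemma as input). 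The paper circumvents this by integrating by parts in time: writing $\int F\,\theta_t\,dx=\frac{d}{dt}\int F\,\theta\,dx-\int F_t\,\theta\,dx$ with $F=2\mu|\mathfrak{D}(\mathbf{u})|^2+\nu|\nabla\times\mathbf{H}|^2$, so the problematic pairing becomes $\nabla\mathbf{u}_t$ (resp.\ $\nabla\mathbf{H}_t$) against $\theta$, estimated by $\|\nabla\mathbf{u}\|_{L^3}\|\nabla\mathbf{u}_t\|_{L^2}\|\nabla\theta\|_{L^2}$ with $\|\nabla\mathbf{u}_t\|_{L^2}^2\in L^1(0,T)$ already secured by the first half of the lemma; the total-derivative term $\frac{d}{dt}\int F\theta\,dx$ is then absorbed on the left since $\int F\theta\,dx\le\frac{\kappa}{2}\|\nabla\theta\|_{L^2}^2+C$. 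You need this (or an equivalent) device. A smaller slip in the same step: for the convection term, H\"older with exponents $6,2,2$ does not close ($\tfrac16+\tfrac12+\tfrac12>1$); you should use $\|\mathbf{u}\|_{L^\infty}\|\nabla\theta\|_{L^2}\|\sqrt{\rho}\theta_t\|_{L^2}$ with $\|\mathbf{u}\|_{L^\infty}\lesssim\|\nabla\mathbf{u}\|_{H^1}$, as the paper does.
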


\begin{proof}
Differentiating $\eqref{MHD_1}_2$ with respect to $t$ yields
\begin{equation} \label{utt}
\rho \mathbf{u}_{tt} + \rho \mathbf{u}\cdot \nabla \mathbf{u}_t -\mu \Delta \mathbf{u}_t = -\nabla P_t - \rho_t \mathbf{u}_t - (\rho \mathbf{u})_t \cdot \nabla \mathbf{u} + (\mathbf{H}_t \cdot \nabla)\mathbf{H} + (\mathbf{H} \cdot \nabla)\mathbf{H}_t.
\end{equation}
Multiplying the above equality \eqref{utt} by $\mathbf{u}_t$, and integrating the resulting equation over $\mathbb{R}^3,$ one obtains
\begin{equation} \label{utteq}
\begin{aligned}
&\frac{1}{2}\frac{d}{dt}\int \rho|\mathbf{u}_t|^2 dx + \mu \int |\nabla \mathbf{u}_t |^2 dx \\
=& \int \rho_t |\mathbf{u}_t|^2 dx - \int (\rho \mathbf{u})_t \cdot \nabla \mathbf{u} \cdot \mathbf{u}_t dx \\
&+ \int (\mathbf{H}_t \cdot \nabla) \mathbf{H}\cdot \mathbf{u}_t dx + \int (\mathbf{H} \cdot \nabla) \mathbf{H}_t \cdot \mathbf{u}_t dx.
\end{aligned}
\end{equation}
Next, differentiating $\eqref{MHD_1}_4$ with respect to $t$ yields
\begin{equation} \label{Htt}
\mathbf{H}_{tt} + \mathbf{u}\cdot \nabla \mathbf{H}_t -\nu \Delta \mathbf{H}_t = - \mathbf{u}_t \cdot \nabla \mathbf{H} + (\mathbf{H}_t \cdot \nabla)\mathbf{u} + (\mathbf{H} \cdot \nabla)\mathbf{u}_t.
\end{equation}
Multiplying the above equality \eqref{Htt} by $\mathbf{H}_t$, and integrating the resulting equation over $\mathbb{R}^3,$ one obtains
\begin{equation} \label{Htteq}
\begin{aligned}
&\frac{1}{2}\frac{d}{dt}\int |\mathbf{H}_t|^2 dx + \nu \int |\nabla \mathbf{H}_t |^2 dx \\
=& - \int  \mathbf{u}_t \cdot \nabla \mathbf{H} \cdot \mathbf{H}_t dx 
+ \int (\mathbf{H}_t \cdot \nabla) \mathbf{u}\cdot \mathbf{H}_t dx + \int (\mathbf{H} \cdot \nabla) \mathbf{u}_t \cdot \mathbf{H}_t dx.
\end{aligned}
\end{equation}
Adding \eqref{Htteq} into \eqref{utteq}, we get
\begin{equation} \label{uHtt}
\begin{aligned}
&\frac{1}{2}\frac{d}{dt}\int (\rho |\mathbf{u}_t|^2 + |\mathbf{H}_t|^2) dx +  \int (\mu |\nabla \mathbf{u}_t|^2 + \mu |\nabla \mathbf{H}_t |^2 ) dx \\
=& - \int \rho_t |\mathbf{u}_t|^2 dx - \int (\rho \mathbf{u})_t \cdot \nabla \mathbf{u} \cdot \mathbf{u}_t dx \\
& + \int (\mathbf{H}_t \cdot \nabla) \mathbf{H}\cdot \mathbf{u}_t dx  - \int  \mathbf{u}_t \cdot \nabla \mathbf{H} \cdot \mathbf{H}_t dx 
+ \int (\mathbf{H}_t \cdot \nabla) \mathbf{u}\cdot \mathbf{H}_t dx \\
\triangleq & I_1 + I_2 + I_3 + I_4 + I_5.
\end{aligned}
\end{equation}
The terms $I_1$-$I_5$ are estimated as follows. Indeed, it follows from \eqref{GN1}, \eqref{GN2}, \eqref{nablauH_1}, integrating by parts and H\"older inequality that
\begin{equation} \label{i1}
\begin{aligned}
I_1 & \le \int \rho |\mathbf{u}||\mathbf{u}_t| |\nabla \mathbf{u}_t| dx \\
& \le C \|\rho\|_{L^{\infty}}^{\frac{1}{2}} \|\mathbf{u}\|_{L^6} \|\sqrt{\rho}\mathbf{u}_t \|_{L^3} \|\nabla \mathbf{u}_t\|_{L^2} \\
& \le C  \|\nabla \mathbf{u}\|_{L^2} \|\sqrt{\rho}\mathbf{u}_t \|_{L^2}^{\frac{1}{2}} \|\sqrt{\rho}\mathbf{u}_t \|_{L^6}^{\frac{1}{2}} \|\nabla \mathbf{u}_t\|_{L^2} \\
& \le C \|\sqrt{\rho}\mathbf{u}_t \|_{L^2}^{\frac{1}{2}} \|\nabla \mathbf{u}_t \|_{L^6}^{\frac{3}{2}} \\
& \le \frac{\mu}{8} \|\nabla \mathbf{u}_t\|_{L^2}^2 + C \|\sqrt{\rho}\mathbf{u}_t \|_{L^2}^2,\\
\end{aligned}
\end{equation}
and 
\begin{equation}\label{i2}
\begin{aligned}
I_2 & \le \int (\rho |\mathbf{u}||\nabla \mathbf{u}|^2 |\mathbf{u}_t| + \rho |\mathbf{u}|^2 |\nabla^2 \mathbf{u}||\mathbf{u}_t| + \rho|\mathbf{u}|^2 |\nabla \mathbf{u}||\nabla \mathbf{u}_t|) dx \\
& \le C (\|\mathbf{u}\|_{L^6}\|\nabla \mathbf{u}\|_{L^6}^2 \|\sqrt{\rho} \mathbf{u}_t\|_{L^2} + \|\mathbf{u}\|_{L^6}^2 \|\nabla^2 \mathbf{u}\|_{L^2} \|\mathbf{u}_t\|_{L^6} \\
& \quad + \|\mathbf{u}\|_{L^6}^2 \|\nabla \mathbf{u}\|_{L^6} \|\nabla \mathbf{u}_t\|_{L^2}   ) \\
& \le C (\|\nabla \mathbf{u}\|_{L^2}\|\nabla^2 \mathbf{u}\|_{L^2}^2 \|\sqrt{\rho} \mathbf{u}_t\|_{L^2} + \|\nabla \mathbf{u}\|_{L^2}^2 \|\nabla^2 \mathbf{u}\|_{L^2} \|\nabla \mathbf{u}_t\|_{L^2}  \\
&\quad + \|\nabla \mathbf{u}\|_{L^2}^2 \|\nabla^2 \mathbf{u}\|_{L^2} \|\nabla \mathbf{u}_t\|_{L^2}   ) \\
& \le \frac{\mu}{8} \|\nabla \mathbf{u}_t\|_{L^2}^2 + C \|\sqrt{\rho} \mathbf{u}_t\|_{L^2}^4 +C\|\mathbf{H}_t\|_{L^2}^4 +  C,
\end{aligned}
\end{equation}
duo to \eqref{H22}. Similarly,
\begin{equation} \label{i3}
\begin{aligned}
I_3 & \le  \|\sqrt{\rho} \mathbf{u}_t\|_{L^4}^2  \|\nabla \mathbf{u}\|_{L^2} \\
& \le C \|\sqrt{\rho} \mathbf{u}_t\|_{L^2}^{\frac{1}{2}} \|\sqrt{\rho} \mathbf{u}_t\|_{L^6}^{\frac{3}{2}}\\
&\le C \|\sqrt{\rho} \mathbf{u}_t\|_{L^2}^{\frac{1}{2}} \|\nabla \mathbf{u}_t\|_{L^2}^{\frac{3}{2}}\\
& \le \frac{\mu}{8} \|\nabla \mathbf{u}_t\|_{L^2}^2 + C \|\sqrt{\rho} \mathbf{u}_t\|_{L^2}^2,
\end{aligned}
\end{equation}
\begin{equation} \label{i4}
\begin{aligned}
I_4 & \le  C \|\nabla \mathbf{H}\|_{L^2} \|\mathbf{H}_t\|_{L^3}  \|\mathbf{u}_t\|_{L^6} \\
& \le C \|\mathbf{H}_t\|_{L^2}^{\frac{1}{2}} \|\mathbf{H}_t\|_{L^6}^{\frac{1}{2}}\|\nabla \mathbf{u}_t\|_{L^2} \\
& \le \frac{\mu}{8} \|\nabla \mathbf{u}_t\|_{L^2}^2 + C \|\mathbf{H}_t\|_{L^2} \|\nabla \mathbf{H}_t\|_{L^2} \\
& \le \frac{\mu}{8} \|\nabla \mathbf{u}_t\|_{L^2}^2 + \frac{\nu}{4}\|\nabla \mathbf{H}_t\|_{L^2}^2 +   C \|\mathbf{H}_t\|_{L^2}^2, \\
\end{aligned}
\end{equation}
and
\begin{equation} \label{i5}
\begin{aligned}
I_5 & \le  \|\nabla \mathbf{u}\|_{L^2} \|\mathbf{H}_t\|_{L^4}^2  \\
& \le C \|\mathbf{H}_t\|_{L^2}^{\frac{3}{2}} \|\mathbf{H}_t\|_{L^6}^{\frac{1}{2}} \\
& \le  C \|\mathbf{H}_t\|_{L^2}^{\frac{3}{2}} \|\nabla \mathbf{H}_t\|_{L^2}^{\frac{1}{2}} \\
& \le \frac{\nu}{4} \|\nabla \mathbf{H}_t\|_{L^2}^2 +   C \|\mathbf{H}_t\|_{L^2}^2. \\
\end{aligned}
\end{equation}
Substituting \eqref{i1}-\eqref{i5} into \eqref{uHtt}, one has
\begin{equation}
\begin{aligned}
\frac{d}{dt}\int (\rho|\mathbf{u}_t|^2 + |\mathbf{H}_t|^2) dx + (\mu \|\sqrt{\rho}\mathbf{u}_t\|_{L^2}^2 + \nu \|\mathbf{H}_t\|_{L^2}^2 ) \le C (\|\sqrt{\rho} \mathbf{u}_t\|_{L^2}^4 + \|\mathbf{H}_t\|_{L^2}^4) + C.
\end{aligned}
\end{equation}
This together with Gronwall's inequality and \eqref{nablauH_1} yields
\begin{equation} \label{uHtteq}
\sup_{0 \le t \le T} (\|\sqrt{\rho}\mathbf{u}_t\|_{L^2}^2 + \|\mathbf{H}_t\|_{L^2}^2) + \int_0^T (\|\nabla \mathbf{u}_t\|_{L^2}^2 + \|\nabla \mathbf{H}_t\|_{L^2}^2) dt \le C. 
\end{equation}
Now we will prove the boundedness of $\|\nabla \theta\|_{L^2}^2(t).$ Indeed, multiplying $\eqref{MHD_1}_3$ by $\theta_t$ and integrating the resulting equation over $\mathbb{R}^3,$ one has
\begin{equation} \label{thetat}
\begin{aligned}
&\frac{\kappa}{2}\frac{d}{dt} \int |\nabla \theta|^2 dx + c_v \int \rho \theta_t^2 dx \\
= & -c_v \int \rho \mathbf{u}\cdot \nabla \theta \cdot \theta_t dx + 2\mu \int |\mathfrak{D}(\mathbf{u})|^2 \theta_t dx + \nu \int |\nabla \times \mathbf{H}|\theta_t dx \\
\triangleq & J_1 + J_2 + J_3.
\end{aligned}
\end{equation}
In view of H\"older inequality, \eqref{nablauH_1} and \eqref{uHtteq}, we obtain
\begin{equation} \label{j1}
\begin{aligned}
J_1 & \le C \|\mathbf{u}\|_{L^{\infty}}\|\sqrt{\rho} \theta_t \|_{L^2} \|\nabla \theta\|_{L^2} \\
& \le C \|\mathbf{u}\|_{W^{1, 6}} \|\sqrt{\rho} \theta_t\|_{L^2} \|\nabla \theta\|_{L^2}\\
& \le C \|\nabla \mathbf{u}\|_{H^1} \|\sqrt{\rho} \theta_t\|_{L^2} \|\nabla \theta\|_{L^2}\\
& \le \frac{c_v}{2} \|\sqrt{\rho} \theta_t\|_{L^2}^2 + C  \|\nabla \theta\|_{L^2}^2.
\end{aligned}
\end{equation}
Furthermore, we get
\begin{equation} \label{j2}
\begin{aligned}
J_{2} &=2 \mu \frac{d}{d t} \int|\mathfrak{D}(\mathbf{u})|^{2} \theta d x-2 \mu \int\left(|\mathfrak{D}(\mathbf{u})|^{2}\right)_{t} \theta d x \\
& \leq 2 \mu \frac{d}{d t} \int|\mathfrak{D}(\mathbf{u})|^{2} \theta d x+C \int\left|\nabla \mathbf{u} \| \nabla \mathbf{u}_{t}\right| \theta d x \\
& \leq 2 \mu \frac{d}{d t} \int|\mathfrak{D}(\mathbf{u})|^{2} \theta d x+C\|\nabla \mathbf{u}\|_{L^{3}}\left\|\nabla \mathbf{u}_{t}\right\|_{L^{2}}\|\theta\|_{L^{6}} \\
& \leq 2 \mu \frac{d}{d t} \int|\mathfrak{D}(\mathbf{u})|^{2} \theta d x+C\|\nabla \mathbf{u}\|_{L^{2}}^{\frac{1}{2}}\left\|\nabla^{2} \mathbf{u}\right\|_{L^{2}}^{\frac{1}{2}}\left\|\nabla \mathbf{u}_{t}\right\|_{L^{2}}\|\nabla \theta\|_{L^{2}} \\
& \leq 2 \mu \frac{d}{d t} \int|\mathfrak{D}(\mathbf{u})|^{2} \theta d x+C\left\|\nabla \mathbf{u}_{t}\right\|_{L^{2}}^{2}+C\|\nabla \theta\|_{L^{2}}^{2},
\end{aligned}
\end{equation}
and
\begin{equation} \label{j3}
\begin{aligned}
J_{3} &=\nu \frac{d}{d t} \int|\nabla \times \mathbf{H}|^{2} \theta d x- \nu \int\left(|\nabla \times \mathbf{H}|^{2}\right)_{t} \theta d x \\
& \leq \nu \frac{d}{d t} \int|\nabla \times \mathbf{H}|^{2} \theta d x+C \int\left|\nabla \mathbf{H} \| \nabla \mathbf{H}_{t}\right| \theta d x \\
& \leq \nu \frac{d}{d t} \int|\nabla \times \mathbf{H}|^{2} \theta d x+C\|\nabla \mathbf{H}\|_{L^{3}}\left\|\nabla \mathbf{H}_{t}\right\|_{L^{2}}\|\theta\|_{L^{6}} \\
& \leq \nu \frac{d}{d t} \int|\nabla \times \mathbf{H}|^{2} \theta d x+C\|\nabla \mathbf{H}\|_{L^{2}}^{\frac{1}{2}}\left\|\nabla^{2} \mathbf{H}\right\|_{L^{2}}^{\frac{1}{2}}\left\|\nabla \mathbf{H}_{t}\right\|_{L^{2}}\|\nabla \theta\|_{L^{2}} \\
& \leq  \nu \frac{d}{d t} \int|\nabla \times \mathbf{H}|^{2} \theta d x+C\left\|\nabla \mathbf{H}_{t}\right\|_{L^{2}}^{2}+C\|\nabla \theta\|_{L^{2}}^{2}.
\end{aligned}
\end{equation}

Substituting \eqref{j1}-\eqref{j3} into \eqref{thetat}, one has
\begin{equation}
\begin{aligned}
& \frac{d}{dt} \int \left(\kappa |\nabla \theta|^2 -4\mu |\mathfrak{D}(\mathbf{u})|^2 \theta -2 \nu |\nabla \times \mathbf{H}|^2 \theta \right) dx + c_v \int \rho \theta_t^2 dx  \\
 \le &  C(\|\nabla \mathbf{u}_t\|_{L^2}^2 + \|\nabla \mathbf{H}_t\|_{L^2}^2) + C \|\nabla \theta\|_{L^2}^2,
\end{aligned}
\end{equation}
which together with the fact
\begin{equation}
\begin{aligned}
& \int \left( 4\mu |\mathfrak{D}(\mathbf{u})|^{2} + 2\nu |\nabla \times \mathbf{H}|^2\right) \theta d x \\
\leq & C\|\theta\|_{L^{6}}( \|\nabla \mathbf{u}\|_{L^{\frac{12}{5}}}^{2} +  \|\nabla \mathbf{H}\|_{L^{\frac{12}{5}}}^{2})\\
\leq & C\|\nabla \theta\|_{L^{2}}( \|\nabla \mathbf{u}\|_{L^{2}}^{\frac{3}{2}}\left\|\nabla^{2} \mathbf{u}\right\|_{L^{2}}^{\frac{1}{2}}  + \|\nabla \mathbf{H}\|_{L^{2}}^{\frac{3}{2}}\left\|\nabla^{2} \mathbf{H}\right\|_{L^{2}}^{\frac{1}{2}})\\
 \leq & \frac{\kappa}{2}\|\nabla \theta\|_{L^{2}}^{2}+C,
\end{aligned}
\end{equation}
and Gronwall's inequality and \eqref{uHtteq} yields 
\begin{equation} \label{thetateq1}
\sup_{0 \le t \le T} \|\nabla \theta\|_{L^2}^2 + \int_0^T \|\sqrt{\rho} \theta_t\|_{L^2}^2 dt \le C.
\end{equation}
Hence, the desired \eqref{rhothetaeq} follows from $\eqref{uHtteq}$ and \eqref{thetateq1}. Therefore the proof of lemma \ref{rhotheta} is completed.
\end{proof}

\begin{lemma} \label{lem7}
Under the assumption \eqref{blowup}, it holds that for any $0 \le T < T^*,$
\begin{equation} \label{nablathetat}
\sup_{0 \le t \le T} \|\sqrt{\rho} \theta_t \|_{L^2}^2 + \int_0^T \|\nabla \theta_t\|_{L^2}^2 dt \le C.
\end{equation}
\end{lemma}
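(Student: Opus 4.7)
The plan is to derive an energy identity for $\sqrt{\rho}\theta_t$ by differentiating $\eqref{MHD_1}_3$ with respect to $t$, testing the result against $\theta_t$, and then carefully absorbing all dangerous factors into $\kappa\|\nabla\theta_t\|_{L^2}^2$ and the known bounds of Lemmas \ref{lem2}--\ref{rhotheta}. Explicitly, I differentiate
$$c_v(\rho\theta_t + \rho\mathbf{u}\cdot\nabla\theta) - \kappa\Delta\theta = 2\mu|\mathfrak{D}(\mathbf{u})|^2 + \nu|\nabla\times\mathbf{H}|^2$$
in $t$, multiply by $\theta_t$, integrate over $\mathbb{R}^3$, and use the continuity equation $\rho_t = -\mathrm{div}(\rho\mathbf{u})$ together with $\mathrm{div}\,\mathbf{u}=0$ to rewrite the $\rho_t$-contributions. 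This yields the identity
$$\frac{c_v}{2}\frac{d}{dt}\!\int\!\rho\theta_t^2\,dx + \kappa\!\int\!|\nabla\theta_t|^2\,dx = -c_v\!\int\!\rho\mathbf{u}_t\cdot\nabla\theta\,\theta_t\,dx - c_v\!\int\!\rho\mathbf{u}\cdot\nabla(\mathbf{u}\cdot\nabla\theta)\theta_t\,dx + 4\mu\!\int\!\mathfrak{D}(\mathbf{u})\!:\!\mathfrak{D}(\mathbf{u}_t)\theta_t\,dx + 2\nu\!\int\!(\nabla\times\mathbf{H})\cdot(\nabla\times\mathbf{H}_t)\theta_t\,dx.$$

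For the first two terms I would use Hölder with H\"older exponents $(2,3,6)$ or $(\infty,2,2)$, combined with $\|\theta_t\|_{L^6}\le C\|\nabla\theta_t\|_{L^2}$, $\|\mathbf{u}\|_{L^\infty}\le C\|\nabla\mathbf{u}\|_{H^1}$ and the uniform bounds $\|\nabla\theta\|_{L^2},\|\sqrt{\rho}\mathbf{u}_t\|_{L^2},\|\nabla\mathbf{u}\|_{L^2}\le C$ from Lemmas \ref{nablauH} and \ref{rhotheta}; the awkward factor $\|\nabla^2\theta\|_{L^2}$ that arises when expanding $\nabla(\mathbf{u}\cdot\nabla\theta)$ is controlled by the elliptic estimate for $\eqref{MHD_1}_3$ (which expresses $\kappa\Delta\theta$ in terms of $\sqrt{\rho}\theta_t$, $\rho\mathbf{u}\cdot\nabla\theta$, $|\nabla\mathbf{u}|^2$ and $|\nabla\mathbf{H}|^2$, all of which have been controlled in earlier lemmas). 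For the last two source terms I apply Hölder with $(3,2,6)$ and the Gagliardo--Nirenberg inequality $\|\nabla\mathbf{u}\|_{L^3}\le C\|\nabla\mathbf{u}\|_{L^2}^{1/2}\|\nabla^2\mathbf{u}\|_{L^2}^{1/2}$ (and the analogue for $\mathbf{H}$), together with the $L^2_t L^2_x$ bounds on $\nabla^2\mathbf{u}$, $\nabla^2\mathbf{H}$, $\nabla\mathbf{u}_t$, $\nabla\mathbf{H}_t$ already established in Lemmas \ref{nablauH} and \ref{rhotheta}. All estimates will be arranged so that every occurrence of $\|\nabla\theta_t\|_{L^2}^2$ on the right side carries a small coefficient that can be absorbed into the left.

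The initial datum $\|\sqrt{\rho_0}\,\theta_t(0)\|_{L^2}$ is finite: from the compatibility condition \eqref{CC2} one has
$$\sqrt{\rho_0}\,\theta_t(0) = \frac{1}{c_v}\mathbf{g}_2 - \sqrt{\rho_0}\,\mathbf{u}_0\cdot\nabla\theta_0 \in L^2,$$
so $\|\sqrt{\rho_0}\theta_t(0)\|_{L^2}\le C$. Collecting the estimates, the integrated identity takes the Gronwall form
$$\frac{d}{dt}\|\sqrt{\rho}\,\theta_t\|_{L^2}^2 + \kappa\|\nabla\theta_t\|_{L^2}^2 \le C\bigl(1 + \|\nabla\mathbf{u}_t\|_{L^2}^2 + \|\nabla\mathbf{H}_t\|_{L^2}^2\bigr)\,\|\sqrt{\rho}\theta_t\|_{L^2}^2 + C\bigl(1 + \|\nabla\mathbf{u}_t\|_{L^2}^2 + \|\nabla\mathbf{H}_t\|_{L^2}^2\bigr),$$
with the time factor $1+\|\nabla\mathbf{u}_t\|_{L^2}^2+\|\nabla\mathbf{H}_t\|_{L^2}^2\in L^1(0,T)$ by \eqref{rhothetaeq}. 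Gronwall's inequality then delivers \eqref{nablathetat}.

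The main obstacle is controlling the cubic/quartic terms such as $\int|\mathfrak{D}(\mathbf{u}):\mathfrak{D}(\mathbf{u}_t)\theta_t|\,dx$ and $\int\rho|\mathbf{u}|^2|\nabla^2\theta||\theta_t|\,dx$ without relying on an a priori $L^2$-bound of $\nabla^2\theta$: the trick is to use the elliptic regularity for the temperature equation only \emph{after} isolating $\|\nabla\theta_t\|_{L^2}^2$ on the right-hand side, so that the resulting bound on $\|\nabla^2\theta\|_{L^2}$ is integrable in time against weights supplied by Lemma \ref{rhotheta}. Once this bookkeeping is in place, every remaining term is standard.
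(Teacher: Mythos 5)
Your proposal follows essentially the same route as the paper's proof: differentiate $\eqref{MHD_1}_3$ in $t$, test against $\theta_t$, control $\|\nabla^2\theta\|_{L^2}$ via the elliptic estimate for the temperature equation (which yields $\|\nabla^2\theta\|_{L^2}^2\le C\|\sqrt{\rho}\,\theta_t\|_{L^2}^2+C$ as in \eqref{nabla2theta}), and close with Gronwall using the $L^1(0,T)$-integrability of $\|\nabla\mathbf{u}_t\|_{L^2}^2+\|\nabla\mathbf{H}_t\|_{L^2}^2$ from \eqref{rhothetaeq}. One bookkeeping remark: the displayed identity omits the nonvanishing term $c_v\int \mathrm{div}(\rho\mathbf{u})|\theta_t|^2\,dx$ (the paper's $K_1$) and only part of the contribution of $\rho_t\,\mathbf{u}\cdot\nabla\theta$, but both are bounded by $C\|\sqrt{\rho}\,\theta_t\|_{L^2}^2+\frac{\kappa}{10}\|\nabla\theta_t\|_{L^2}^2$ by exactly the estimates you describe, so they slot into your final Gronwall inequality without change.
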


\begin{proof}
Differentiating $\eqref{MHD_1}_3$ with respect to $t$ and direct computing gives
\begin{equation} \label{nablatheta1}
\begin{aligned}
c_v(\rho \theta_{tt} + \rho \mathbf{u}\cdot \nabla \theta_t) -\kappa \Delta \theta_t   = & -c_v \rho_t (\theta_t + \mathbf{u}\cdot \nabla \theta_t)- c_v \rho (\mathbf{u}_t\cdot \nabla) \theta  \\
& + 2\mu \left(|\mathfrak{D}(\mathbf{u})|^2\right)_t + \nu \left(|\nabla \times \mathbf{H}|^2\right)_t. 
\end{aligned}
\end{equation}
Multiplying \eqref{nablatheta1} by $\theta_t$ and integrating the resulting equation over $\mathbb{R}^3$ yield
\begin{equation} \label{nablathetaeq}
\begin{aligned}
\frac{c_{v}}{2} & \frac{d}{d t} \int \rho\left|\theta_{t}\right|^{2} d x+\kappa \int\left|\nabla \theta_{t}\right|^{2} d x \\
\quad=& c_{v} \int \operatorname{div}(\rho \mathbf{u})\left|\theta_{t}\right|^{2} d x+c_{v} \int \operatorname{div}(\rho \mathbf{u}) (\mathbf{u} \cdot \nabla \theta) \theta_{t} d x \\
&-c_{v} \int \rho \mathbf{u}_{t} \cdot \nabla \theta \theta_{t} d x+2 \mu \int\left(|\mathfrak{D}(\mathbf{u})|^{2}\right)_{t} \theta_{t} d x \\
& +  \nu \int\left(|\nabla \times \mathbf{H}|^{2}\right)_{t} \theta_{t} d x \triangleq \sum_{i=1}^{5} K_{i} .
\end{aligned}
\end{equation}
Next, we deal carefully with each term $K_1$-$K_5$ as follows: 
\begin{equation} \label{k1}
\begin{aligned}
K_{1} & \leq C \int \rho\left| \mathbf{u}\left\|\theta_{t}\right\| \nabla \theta_{t}\right| d x \\
& \leq C\|\mathbf{u}\|_{L^{\infty}}\left\|\sqrt{\rho} \theta_{t}\right\|_{L^{2}}\left\|\nabla \theta_{t}\right\|_{L^{2}} \\
& \leq C\left(\|\mathbf{u}\|_{L^{6}}+\|\nabla \mathbf{u}\|_{L^{6}}\right)\left\|\sqrt{\rho} \theta_{t}\right\|_{L^{2}}\left\|\nabla \theta_{t}\right\|_{L^{2}} \\
& \leq C\left(\|\nabla \mathbf{u}\|_{L^{2}}+\left\|\nabla^{2} \mathbf{u}\right\|_{L^{2}}\right)\left\|\sqrt{\rho} \theta_{t}\right\|_{L^{2}}\left\|\nabla \theta_{t}\right\|_{L^{2}} \\
& \leq C\left\|\sqrt{\rho} \theta_{t}\right\|_{L^{2}}^{2}+\frac{\kappa}{10}\left\|\nabla \theta_{t}\right\|_{L^{2}}^{2},
\end{aligned}
\end{equation}
\begin{equation} \label{k2}
\begin{aligned}
K_{2} \leq & C \int \left(\rho|\mathbf{u}||\nabla \mathbf{u}| |\nabla \theta||\theta_{t}| +\rho|\mathbf{u}|^{2}|\nabla^{2} \theta|| \theta_{t}| +\rho|\mathbf{u}|^{2}|\nabla \theta| | \nabla \theta_{t}|\right) d x \\
\leq & C\|\mathbf{u}\|_{L^{\infty}}\|\nabla \mathbf{u}\|_{L^{3}}\|\nabla \theta\|_{L^{2}}\left\|\theta_{t}\right\|_{L^{6}}+C\|\mathbf{u}\|_{L^{6}}^{2}\left\|\nabla^{2} \theta\right\|_{L^{2}}\left\|\theta_{t}\right\|_{L^{6}} \\
&+C\|\mathbf{u}\|_{L^{\infty}}^{2}\|\nabla \theta\|_{L^{2}}\left\|\nabla \theta_{t}\right\|_{L^{2}} \\
\leq & C\left\|\nabla \theta_{t}\right\|_{L^{2}}+C\left\|\nabla^{2} \theta\right\|_{L^{2}}\left\|\nabla \theta_{t}\right\|_{L^{2}} \\
\leq & C\left\|\nabla^{2} \theta\right\|_{L^{2}}^{2}+\frac{\kappa}{10}\left\|\nabla \theta_{t}\right\|_{L^{2}}^{2}+C,
\end{aligned}
\end{equation}
\begin{equation} \label{k3}
\begin{aligned}
K_{3} & \leq C\left\|\sqrt{\rho} \mathbf{u}_{t}\right\|_{L^{2}}\|\nabla \theta\|_{L^{3}}\left\|\theta_{t}\right\|_{L^{6}} \\
& \leq C\|\nabla \theta\|_{L^{2}}^{\frac{1}{2}}\left\|\nabla^{2} \theta\right\|_{L^{2}}^{\frac{1}{2}}\left\|\nabla \theta_{t}\right\|_{L^{2}} \\
& \leq C\left\|\nabla^{2} \theta\right\|_{L^{2}}^{2}+\frac{\kappa}{10}\left\|\nabla \theta_{t}\right\|_{L^{2}}^{2}+C,
\end{aligned}
\end{equation}
\begin{equation} \label{k4}
\begin{aligned}
K_{4} & \leq C \int\left|\nabla \mathbf{u}\left\|\nabla \mathbf{u}_{t}\right\| \theta_{t}\right| d x \\
& \leq C\|\nabla \mathbf{u}\|_{L^{3}}\left\|\nabla \mathbf{u}_{t}\right\|_{L^{2}}\left\|\theta_{t}\right\|_{L^{6}} \\
& \leq C\|\nabla \mathbf{u}\|_{L^{3}}\left\|\nabla \mathbf{u}_{t}\right\|_{L^{2}}\left\|\nabla \theta_{t}\right\|_{L^{2}} \\
& \leq C\left\|\nabla \mathbf{u}_{t}\right\|_{L^{2}}^{2}+\frac{\kappa}{10}\left\|\nabla \theta_{t}\right\|_{L^{2}}^{2},
\end{aligned}
\end{equation}
and 
\begin{equation} \label{k5}
\begin{aligned}
K_{5} & \leq C \int\left|\nabla \mathbf{H}\left\|\nabla \mathbf{H}_{t}\right\| \theta_{t}\right| d x \\
& \leq C\|\nabla \mathbf{H}\|_{L^{3}}\left\|\nabla \mathbf{H}_{t}\right\|_{L^{2}}\left\|\theta_{t}\right\|_{L^{6}} \\
& \leq C\|\nabla \mathbf{H}\|_{L^{3}}\left\|\nabla \mathbf{H}_{t}\right\|_{L^{2}}\left\|\nabla \theta_{t}\right\|_{L^{2}} \\
& \leq C\left\|\nabla \mathbf{H}_{t}\right\|_{L^{2}}^{2}+\frac{\kappa}{10}\left\|\nabla \theta_{t}\right\|_{L^{2}}^{2} .
\end{aligned}
\end{equation}
Furthermore, in view of $L^2$-estimate to Eq. $\eqref{MHD_1}_3$, we have
\begin{equation} \label{nabla2theta}
\begin{aligned}
\left\|\nabla^{2} \theta\right\|_{L^{2}}^{2} & \leq C\left(\left\|\rho \theta_{t}\right\|_{L^{2}}^{2}+\|\rho \mathbf{u} \cdot \nabla \theta\|_{L^{2}}^{2}+\left\||\nabla \mathbf{u}|^{2}\right\|_{L^{2}}^{2}+\left\||\nabla \mathbf{H}|^{2}\right\|_{L^{2}}^{2}\right) \\
& \leq C\left\|\sqrt{\rho} \theta_{t}\right\|_{L^{2}}^{2}+\|\mathbf{u}\|_{L^{\infty}}^{2}\|\nabla \theta\|_{L^{2}}^{2}+C\|\nabla \mathbf{u}\|_{L^{4}}^{4} + C\|\nabla \mathbf{H}\|_{L^{4}}^{4}\\
& \leq C\left\|\sqrt{\rho} \theta_{t}\right\|_{L^{2}}^{2}+C\|\nabla \mathbf{u}\|_{L^{2}}\left\|\nabla^{2} \mathbf{u}\right\|_{L^{2}}^{3}+C\|\nabla \mathbf{H}\|_{L^{2}}\left\|\nabla^{2} \mathbf{H} \right\|_{L^{2}}^{3} + C \\
& \leq C\left\|\sqrt{\rho} \theta_{t}\right\|_{L^{2}}^{2}+C,
\end{aligned}
\end{equation}
due to \eqref{GN1}, \eqref{rhothetaeq}.

Substituting \eqref{k1}-\eqref{k5} into \eqref{nablathetaeq}, we obtain
\begin{equation} \label{nablathetaeqq}
c_{v} \frac{d}{d t} \int \rho\left|\theta_{t}\right|^{2} d x+\kappa \int\left|\nabla \theta_{t}\right|^{2} d x \leq C\left\|\nabla \mathbf{u}_{t}\right\|_{L^{2}}^{2}+C\left\|\nabla \mathbf{H}_{t}\right\|_{L^{2}}^{2} + C\left\|\sqrt{\rho} \theta_{t}\right\|_{L^{2}}^{2}+C,
\end{equation}
which together with Gronwall's inequality and \eqref{rhothetaeq} yields the desired \eqref{nablathetat}.

Therefore, the proof of lemma \ref{lem7} is completed.
\end{proof}

\begin{lemma} \label{lem8}
For $\tilde{q} \in (3, 6]$, under the assumption \eqref{blowup}, it holds that for $0 \le T < T^*,$
\begin{equation} \label{365}
\begin{aligned}
 \sup_{0 \le t \le T} (\|\rho\|_{H^1 \cap W^{1, \tilde{q}}} + \|\nabla^2 \mathbf{u}\|_{L^2}^2 & + \|\nabla^2 \mathbf{H}\|_{L^2}^2 + \|\nabla^2 \theta \|_{L^2}^2)  \\
 & +  \int_0^T (\|\nabla^2 \mathbf{u}\|_{L^{\tilde{q}}}^2  + \|\nabla^2 \mathbf{H}\|_{L^{\widetilde{q}}}^2 + \|\nabla^2 \theta\|_{L^{\tilde{q}}}^2) dt \le C.
\end{aligned}
\end{equation}
\end{lemma}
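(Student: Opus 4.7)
The plan is to bootstrap from the a priori bounds of Lemmas \ref{nablauH}, \ref{rhotheta} and \ref{lem7} by combining classical $L^{p}$-regularity for the Stokes and elliptic systems with an $L^{p}$-transport estimate for $\nabla\rho$. The only subtle point is that the $W^{1,\tilde{q}}$-bound on $\rho$ and the $L^{2}(0,T;L^{\tilde{q}})$-bound on $\nabla^{2}\mathbf{u}$ have to be closed simultaneously through a Gronwall argument controlled by $\int_{0}^{T}\|\nabla\mathbf{u}\|_{L^{\infty}}\,dt$.

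I would first deal with the $L^{\infty}(0,T;L^{2})$ part. Lemma \ref{stokeseq} applied to $(\mathbf{u},P)$ with $\mathbf{F}=-\rho\mathbf{u}_{t}-\rho\mathbf{u}\cdot\nabla\mathbf{u}+\mathbf{H}\cdot\nabla\mathbf{H}$, the standard $L^{2}$-elliptic estimate for the heat equation for $\mathbf{H}$, and the $L^{2}$ elliptic estimate \eqref{nabla2theta} already derived in the proof of Lemma \ref{lem7}, express $\|\nabla^{2}\mathbf{u}\|_{L^{2}}$, $\|\nabla^{2}\mathbf{H}\|_{L^{2}}$, $\|\nabla^{2}\theta\|_{L^{2}}$ in terms of $\|\sqrt{\rho}\mathbf{u}_{t}\|_{L^{2}}$, $\|\mathbf{H}_{t}\|_{L^{2}}$, $\|\sqrt{\rho}\theta_{t}\|_{L^{2}}$ plus nonlinear lower-order terms which are handled by \eqref{nablauH_1}, \eqref{hexin}, \eqref{rhothetaeq}, \eqref{nablathetat} and Sobolev embedding $H^{1}\hookrightarrow L^{6}$. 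This gives the desired time-uniform $L^{2}$-bounds for all three second derivatives.

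Next, differentiating $\eqref{MHD_1}_{1}$ and using $\mathrm{div}\,\mathbf{u}=0$ yields
$$\partial_{t}\nabla\rho+\mathbf{u}\cdot\nabla(\nabla\rho)+\nabla\mathbf{u}\cdot\nabla\rho=0.$$
Multiplying by $q|\nabla\rho|^{q-2}\nabla\rho$ for $q\in\{2,\tilde{q}\}$ and integrating gives
$$\frac{d}{dt}\|\nabla\rho\|_{L^{q}}\le C\|\nabla\mathbf{u}\|_{L^{\infty}}\|\nabla\rho\|_{L^{q}}.$$
Gagliardo-Nirenberg \eqref{GN2} provides $\|\nabla\mathbf{u}\|_{L^{\infty}}\le C(\|\nabla\mathbf{u}\|_{L^{2}}+\|\nabla^{2}\mathbf{u}\|_{L^{\tilde{q}}})$, so the remaining task is to prove $\|\nabla^{2}\mathbf{u}\|_{L^{\tilde{q}}}\in L^{2}(0,T)$. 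Applying Lemma \ref{stokeseq} in $L^{\tilde{q}}$, the key term $\|\rho\mathbf{u}_{t}\|_{L^{\tilde{q}}}$ is handled by writing $\rho\mathbf{u}_{t}=\sqrt{\rho}(\sqrt{\rho}\mathbf{u}_{t})$ and interpolating between $\|\sqrt{\rho}\mathbf{u}_{t}\|_{L^{2}}\in L^{\infty}(0,T)$ (from Lemma \ref{rhotheta}) and $\|\mathbf{u}_{t}\|_{L^{6}}\le C\|\nabla\mathbf{u}_{t}\|_{L^{2}}\in L^{2}(0,T)$; the convective term $\|\rho\mathbf{u}\cdot\nabla\mathbf{u}\|_{L^{\tilde{q}}}$ and the Lorentz term $\|\mathbf{H}\cdot\nabla\mathbf{H}\|_{L^{\tilde{q}}}$ are bounded by interpolating $\|\nabla\mathbf{u}\|_{L^{\tilde{q}}}$ and $\|\nabla\mathbf{H}\|_{L^{\tilde{q}}}$ between the already-controlled $L^{2}$-norms of $\nabla$ and $\nabla^{2}$, while $\|\mathbf{u}\|_{L^{\infty}}$ and $\|\mathbf{H}\|_{L^{\infty}}$ come from \eqref{GN2}. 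The analogous elliptic estimates for $\mathbf{H}$ (applied to $\mathbf{H}_{t}-\nu\Delta\mathbf{H}=\mathbf{H}\cdot\nabla\mathbf{u}-\mathbf{u}\cdot\nabla\mathbf{H}$) and for $\theta$ give the corresponding $L^{2}(0,T;L^{\tilde{q}})$-bounds for $\nabla^{2}\mathbf{H}$ and $\nabla^{2}\theta$, after interpolating $\|\mathbf{H}_{t}\|_{L^{\tilde{q}}}$ and $\|\rho\theta_{t}\|_{L^{\tilde{q}}}$ between their $L^{2}$ and $H^{1}$ norms in the same fashion.

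The main technical obstacle is the closure of the loop: the Stokes estimate in $L^{\tilde{q}}$ needs $\|\nabla\mathbf{u}\|_{L^{\tilde{q}}}$ on the right-hand side, which must be bounded strictly by data already under control (via Gagliardo-Nirenberg between $\|\nabla\mathbf{u}\|_{L^{2}}$ and $\|\nabla^{2}\mathbf{u}\|_{L^{2}}$) to avoid circularity. Once this interpolation is executed, $\int_{0}^{T}\|\nabla^{2}\mathbf{u}\|_{L^{\tilde{q}}}^{2}\,dt\le C$ follows, hence $\int_{0}^{T}\|\nabla\mathbf{u}\|_{L^{\infty}}\,dt\le C$, and Gronwall applied to the $\nabla\rho$-inequality closes the $H^{1}\cap W^{1,\tilde{q}}$-bound for $\rho$. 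Feeding this back into the Stokes and elliptic estimates completes the proof of \eqref{365}.
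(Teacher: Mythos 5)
Your proposal is correct and follows essentially the same strategy as the paper's proof: the time-uniform $L^2$ bounds on the second derivatives come from the Stokes/elliptic estimates combined with Lemmas \ref{rhotheta} and \ref{lem7}, the $W^{1,\tilde q}$ bound on $\rho$ comes from the transport estimate closed by controlling $\int_0^T\|\nabla\mathbf{u}\|_{L^\infty}\,dt$, and the $L^2(0,T;L^{\tilde q})$ bounds follow from $L^{\tilde q}$ Stokes/elliptic regularity with $L^2$--$L^6$ interpolation of the right-hand sides. The only (harmless) difference is one of ordering: the paper bounds $\|\nabla\mathbf{u}\|_{L^\infty}\le C\|\nabla\mathbf{u}\|_{W^{1,6}}$ and inserts the $L^6$ Stokes estimate directly into the Gronwall inequality for $\nabla\rho$, so it never needs the $L^2(0,T;L^{\tilde q})$ bound on $\nabla^2\mathbf{u}$ at that stage, whereas you establish that bound first via \eqref{GN2} with exponent $\tilde q$ and then feed it back --- both routes close without circularity.
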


\begin{proof}
Firstly, it follows from \eqref{H22}, \eqref{rhothetaeq}, \eqref{nablathetat} and \eqref{nabla2theta} that
\begin{equation}  \label{366}
\|\nabla^2 \mathbf{u}\|_{L^2}^2 + \|\nabla^2 \mathbf{H}\|_{L^2}^2 + \|\nabla^2 \theta\|_{L^2}^2 \le C.
\end{equation}
Next, the first order spatial derivatives $\partial_i \rho, i=1, 2, 3$ satisfy
\begin{equation*}
\partial_t (\partial_i \rho) + \mathbf{u}\cdot \nabla (\partial_i \rho) + (\partial_i \mathbf{u})\cdot \nabla \rho =0.
\end{equation*}
Therefore, for any $q \in (3, 6],$ standard energy method gives 
\begin{equation} \label{ff}
\begin{aligned}
\frac{d}{dt} \|\nabla \rho\|_{L^q} & \le C \|\nabla \mathbf{u}\|_{L^{\infty}} \|\nabla \rho\|_{L^q} \\
& \le C \|\nabla \mathbf{u}\|_{W^{1, 6}} \|\nabla \rho\|_{L^q} \\
& \le C (\|\nabla \mathbf{u}\|_{L^6} + \|\nabla^2 \mathbf{u}\|_{L^6}) \|\nabla \rho\|_{L^q} \\
& \le C (1 + \|\rho \mathbf{u}_t\|_{L^6} + \|\rho \mathbf{u}\cdot \nabla \mathbf{u}\|_{L^6}+\|\mathbf{H}\cdot \nabla \mathbf{H}\|_{L^6})\|\nabla \rho\|_{L^q} \\
& \le C (1 + \|\nabla \mathbf{u}_t\|_{L^2} + \|\mathbf{u}\|_{L^{\infty}}\|\nabla \mathbf{u}\|_{L^6} + \|\mathbf{H}\|_{L^{\infty}}\|\nabla \mathbf{H}\|_{L^6})\|\nabla \rho\|_{L^q} \\
& \le C (1 + \|\nabla \mathbf{u}_t\|_{L^2} + (\|\mathbf{u}\|_{6}+ \|\nabla \mathbf{u}\|_{L^6})\|\nabla^2 \mathbf{u}\|_{L^2} \\
& \quad +(\|\mathbf{H}\|_{6}+ \|\nabla \mathbf{H}\|_{L^6})\|\nabla^2 \mathbf{H}\|_{L^2})\|\nabla \rho\|_{L^q} \\
& \le C (1 + \|\nabla \mathbf{u}_t\|_{L^2} + \|\nabla^2 \mathbf{u}\|_{L^2} +\|\nabla^2 \mathbf{H}\|_{L^2})\|\nabla \rho\|_{L^q} \\
& \le C (1 + \|\nabla \mathbf{u}_t\|_{L^2}^2)\|\nabla \rho\|_{L^q},
\end{aligned}
\end{equation}
due to \eqref{GN1}, \eqref{GN2}, \eqref{stokesin}, \eqref{nablauH_1}, \eqref{rhothetaeq}, \eqref{366} and interpolation inequality.

Together with Gronwall's inequality and taking $q= 2, \tilde{q}$ in \eqref{ff}, we have
\begin{equation} \label{368}
\sup_{0 \le t \le T} \|\nabla \rho\|_{L^2 \cap L^{\tilde{q}}} \le C.
\end{equation}
Next, in view of \eqref{GN1}, \eqref{GN2}, \eqref{stokesin}, \eqref{nablauH_1}, \eqref{rhothetaeq} and \eqref{366}, we obtain{}
\begin{equation} \label{369}
\begin{aligned}
\int_0^T \left\|\nabla^{2} \mathbf{u}\right\|_{L^{\tilde{q}}}^{2} d t  \leq  & C \int_0^T \left(\left\|\rho \mathbf{u}_{t}\right\|_{L^{\tilde{q}}}^{2}+\|\rho \mathbf{u} \cdot \nabla \mathbf{u}\|_{L^{\tilde{q}}}^{2} + \|\mathbf{H}\cdot \nabla \mathbf{H}\|_{L^{\tilde{q}}}^{2} \right) d t \\
\leq & C \int_0^T \left(\left\|\rho \mathbf{u}_{t}\right\|_{L^{2}}^{2}+\left\|\rho \mathbf{u}_{t}\right\|_{L^{6}}^{2}+\|\mathbf{u}\|_{L^{\infty}}^{2}\|\nabla \mathbf{u}\|_{L^{\tilde{q}}}^{2}+ \|\mathbf{H}\|_{L^{\infty}}^{2}\|\nabla \mathbf{H}\|_{L^{\tilde{q}}}^{2}\right) d t \\
\leq & C \int_0^T \left(1+\left\|\nabla \mathbf{u}_{t}\right\|_{L^{2}}^{2}+\left(\|\mathbf{u}\|_{L^{6}}^{2}+\|\nabla \mathbf{u}\|_{L^{6}}^{2}\right)\left(\|\nabla \mathbf{u}\|_{L^{2}}^{2}+\left\|\nabla^{2} \mathbf{u}\right\|_{L^{2}}^{2}\right)\right.\\
&\left.\times \left(\|\mathbf{H}\|_{L^{6}}^{2}+\|\nabla \mathbf{H}\|_{L^{6}}^{2}\right)\left(\|\nabla \mathbf{H}\|_{L^{2}}^{2}+\left\|\nabla^{2} \mathbf{H}\right\|_{L^{2}}^{2}\right)\right) d t \\
\leq & C \int_0^T \left(1+\left\|\nabla \mathbf{u}_{t}\right\|_{L^{2}}^{2}+\left(\|\nabla \mathbf{u}\|_{L^{2}}^{2}+\|\nabla^2 \mathbf{u}\|_{L^{2}}^{2}\right)\left(\|\nabla \mathbf{u}\|_{L^{2}}^{2}+\left\|\nabla^{2} \mathbf{u}\right\|_{L^{2}}^{2}\right)\right.\\
&\left.\times \left(\|\nabla \mathbf{H}\|_{L^{2}}^{2}+\|\nabla^2 \mathbf{H}\|_{L^{2}}^{2}\right)\left(\|\nabla \mathbf{H}\|_{L^{2}}^{2}+\left\|\nabla^{2} \mathbf{H}\right\|_{L^{2}}^{2}\right)\right) d t \\
\leq & C.
\end{aligned}
\end{equation}
Similarly, 
\begin{equation} \label{370}
\begin{aligned}
\int_0^T \left\|\nabla^{2} \mathbf{H}\right\|_{L^{\tilde{q}}}^{2} d t  \leq  & C \int_0^T \left(\left\|\mathbf{H}_{t}\right\|_{L^{\tilde{q}}}^{2}+\|\mathbf{u} \cdot \nabla \mathbf{H}\|_{L^{\tilde{q}}}^{2} + \|\mathbf{H} \cdot \nabla \mathbf{u}\|_{L^{\tilde{q}}}^{2} \right) d t \\
\leq & C \int_0^T \left(\left\|\mathbf{H}_{t}\right\|_{L^{2}}^{2}+\left\|\mathbf{H}_{t}\right\|_{L^{6}}^{2}+\|\mathbf{u}\|_{L^{\infty}}^{2}\|\nabla \mathbf{H}\|_{L^{\tilde{q}}}^{2}+ \|\mathbf{H}\|_{L^{\infty}}^{2}\|\nabla \mathbf{u}\|_{L^{\tilde{q}}}^{2}\right) d t \\
\leq & C \int_0^T \left(1+\left\|\nabla \mathbf{H}_{t}\right\|_{L^{2}}^{2}+\left(\|\mathbf{u}\|_{L^{6}}^{2}+\|\nabla \mathbf{u}\|_{L^{6}}^{2}\right)\left(\|\nabla \mathbf{H}\|_{L^{2}}^{2}+\left\|\nabla^{2} \mathbf{H}\right\|_{L^{2}}^{2}\right)\right.\\
&\left.\times \left(\|\mathbf{H}\|_{L^{6}}^{2}+\|\nabla \mathbf{H}\|_{L^{6}}^{2}\right)\left(\|\nabla \mathbf{u}\|_{L^{2}}^{2}+\left\|\nabla^{2} \mathbf{u}\right\|_{L^{2}}^{2}\right)\right) d t \\
\leq & C \int_0^T \left(1+\left\|\nabla \mathbf{H}_{t}\right\|_{L^{2}}^{2}+\left(\|\nabla \mathbf{u}\|_{L^{2}}^{2}+\|\nabla^2 \mathbf{u}\|_{L^{2}}^{2}\right)\left(\|\nabla \mathbf{H}\|_{L^{2}}^{2}+\left\|\nabla^{2} \mathbf{H}\right\|_{L^{2}}^{2}\right)\right.\\
&\left.\times \left(\|\nabla \mathbf{H}\|_{L^{2}}^{2}+\|\nabla^2 \mathbf{H}\|_{L^{2}}^{2}\right)\left(\|\nabla \mathbf{u}\|_{L^{2}}^{2}+\left\|\nabla^{2} \mathbf{u}\right\|_{L^{2}}^{2}\right)\right) d t \\
\leq & C.
\end{aligned}
\end{equation}
Furthermore, using the standard $L^p$-estimate to elliptic equation $\eqref{MHD_1}_3$, we obtain
\begin{equation} \label{371}
\begin{aligned}
& \int_0^T \left\| \nabla^{2} \theta \right\|_{L^{\tilde{q}}}^2 d t  \\
\leq & C \int_0^T \left(\left\|\rho \theta_{t}\right\|_{L^{\tilde{q}}}^{2}+\|\rho \mathbf{u} \cdot \nabla \theta\|_{L^{\tilde{q}}}^{2}+\left\||\nabla \mathbf{u}|^{2}\right\|_{L^{\tilde{q}}}^{2}+ \left\||\nabla \mathbf{H}|^{2}\right\|_{L^{\tilde{q}}}^{2}\right) d t \\
\leq & C \int_0^T (\left\|\rho \theta_{t}\right\|_{L^{2}}^{2}+\left\|\rho \theta_{t}\right\|_{L^{6}}^{2}+\|\mathbf{u}\|_{L^{\infty}}^{2}\|\nabla \theta\|_{L^{q}}^{2} \\
&\quad \quad +\|\nabla \mathbf{u}\|_{L^{\infty}}^{2}\|\nabla \mathbf{u}\|_{L^{\tilde{q}}}^{2} + \|\nabla \mathbf{H}\|_{L^{\infty}}^{2}\|\nabla \mathbf{H}\|_{L^{\tilde{q}}}^{2} ) dt \\
\leq & C \int_0^T (1+\left\|\nabla \theta_{t}\right\|_{L^{2}}^{2}+ \left\|\nabla^{2} \theta\right\|_{L^{2}}^{2}+\left(\|\nabla \mathbf{u}\|_{L^{6}}^{2}+\left\|\nabla^{2} \mathbf{u}\right\|_{L^{6}}^{2} \right)\left(\|\nabla \mathbf{u}\|_{L^{2}}^{2}+\|\nabla \mathbf{u}\|_{L^{6}}^{2}\right) \\
& \quad \quad + \left(\|\nabla \mathbf{H}\|_{L^{6}}^{2}+\left\|\nabla^{2} \mathbf{H}\right\|_{L^{6}}^{2} \right)\left(\|\nabla \mathbf{H}\|_{L^{2}}^{2}+\|\nabla \mathbf{H}\|_{L^{6}}^{2}\right))d t \\
\leq & C \int\left(1+\left\|\nabla \theta_{t}\right\|_{L^{2}}^{2}+\left\|\nabla \mathbf{u}_{t}\right\|_{L^{2}}^{2} + \left\|\nabla \mathbf{H}_t\right\|_{L^2}^2 \right) d t \\
\leq & C .
\end{aligned}
\end{equation}
Thus, in view of \eqref{366}, \eqref{368}-\eqref{371}, we complete the proof of Lemma \ref{lem8}.

\end{proof}

\begin{remark}
Define $\dot{f}$ the material derivative of function $f$ with $\dot{f} \triangleq f_t + \mathbf{u} \cdot \nabla f.$ Then we can derive the regularity of the terms $\rho \dot{\mathbf{u}}$ and $ \rho \dot{\theta}$ for the later analysis. 
Indeed, one can deduce from \eqref{stokesin}, \eqref{nablauH_1}, \eqref{rhothetaeq} and \eqref{365} that
\begin{equation}
\begin{aligned}
\|\rho \dot{\mathbf{u}}\|_{L^2}^2 & = \|\rho (\mathbf{u}_t + \mathbf{u} \cdot \nabla \mathbf{u}) \|_{L^2}^2 \\
& \le C (\|\rho \mathbf{u}_t\|_{L^2}^2 + \|\rho \mathbf{u}\cdot \nabla \mathbf{u}\|_{L^2}^2) \\
& \le C (\|\rho \mathbf{u}_t\|_{L^2}^2 + \|\nabla \mathbf{u}\|_{L^2}^3 \|\nabla^2 \mathbf{u}\|_{L^2}) \\
& \le C,
\end{aligned}
\end{equation}
and 
\begin{equation}
\begin{aligned}
\|\rho \dot{\theta}\|_{L^2}^2 & = \|\rho (\theta_t + \mathbf{u} \cdot \nabla \theta) \|_{L^2}^2 \\
& \le C (\|\rho \theta_t\|_{L^2}^2 + \|\mathbf{u}\|_{L^{\infty}}^2 \|\nabla \theta\|_{L^2}^2) \\
& \le C (\|\rho \theta_t\|_{L^2}^2 + \|\nabla \mathbf{u}\|_{H^1}^2 \|\nabla \theta\|_{L^2}^2) \\
& \le C.
\end{aligned}
\end{equation}
\end{remark}

\noindent {\bf{Proof of Theorem \ref{thm1}}.} With the aid of the some a priori estimates Lemmas \ref{lem1}-\ref{lem8}, we can now prove Theorem \ref{thm1} as follows.

On the one hand, the functions $(\rho, \mathbf{u}, \mathbf{H}, \theta)(x, T^*) = \lim\limits_{t \to T^*} (\rho, \mathbf{u}, \mathbf{H}, \theta)(x, t)$ satisfy the regularity condition on the initial data at time $t=T^*.$ Furthermore, standard arguments yield $( \rho \dot{\mathbf{u}}, \rho \dot{\theta}) \in C([0, T^*]; L^2),$ which implies
$$(\rho \dot{\mathbf{u}}, \rho \dot{\theta})(x, T^*) = \lim_{t \to T^*} (\rho \dot{\mathbf{u}}, \rho \dot{\theta}) (x, t) \in L^2. $$
Hence,
$$(-\mathrm{div}(2\mu\mathfrak{D}(\mathbf{u}))+\nabla P-\mathbf{H}\cdot \nabla \mathbf{H})|_{t=T^*} = \sqrt{\rho}(x, T^*)\tilde{\mathbf{g}}_1(x),$$
$$(\kappa \Delta \theta + 2\mu |\mathfrak{D}(\mathbf{u})|^2 + \nu|\nabla \times \mathbf{H}|^2)|_{t=T^*} = \sqrt{\rho}(x, T^*)\tilde{\mathbf{g}}_2(x),$$
where
\begin{equation*}
\tilde{\mathbf{g}}_1(x) \triangleq \left\{
\begin{aligned}
\rho^{-\frac{1}{2}}(x, T^*)(\rho \dot{\mathbf{u}})(x, T^*), \quad & \text{for}~x \in \{x|\rho(x, T^*)>0\}, \\
0, \quad & \text{for}~x \in \{x|\rho(x, T^*)=0\}, \\
\end{aligned}
\right.
\end{equation*}
and 
\begin{equation*}
\tilde{\mathbf{g}}_2(x) \triangleq \left\{
\begin{aligned}
c_v \rho^{-\frac{1}{2}}(x, T^*)(\rho \dot{\theta})(x, T^*), \quad & \text{for}~x \in \{x|\rho(x, T^*)>0\}, \\
0, \quad & \text{for}~x \in \{x|\rho(x, T^*)=0\}, \\
\end{aligned}
\right.
\end{equation*}
satisfying $\tilde{\mathbf{g}}_1, \tilde{\mathbf{g}}_2 \in L^2$. Thus, $(\rho, \mathbf{u}, \mathbf{H}, \theta)(x, T^*)$ satisfies compatibility condition. Therefore, we can take $(\rho, \mathbf{u}, \mathbf{H}, \theta)(x, T^*)$ as the initial data and apply Lemma \ref{local} again to extend the local strong solutions beyond $T^*$, which contradicts the assumption that $T^*$ is the maximal existence time of strong solutions. Therefore, we complete the proof of Theorem \ref{thm1}.

\section{Proof of Theorem \ref{thm2}}

Throughout this section, we denote 
$$ C_0 \triangleq \|\sqrt{\rho_0} \mathbf{u}_0\|_{L^2}^2 + \|\mathbf{H}_0\|_{L^2}^2.$$

Firstly, applying \cite[Theorem 2.1]{lions} and integrating \eqref{energy_diff} with respect to $t$ respectively, we have the following results.

\begin{lemma} \label{lem41}
Let $(\rho, \mathbf{u}, \mathbf{H}, \theta)$ be a strong solution to the system \eqref{MHD}-\eqref{boundary} on $(0, T)$. Then for any $t \in (0, T)$, it holds that
\begin{equation} \label{rhobdd}
\|\rho (t) \|_{L^{\infty}} = \|\rho_0\|_{L^{\infty}},
\end{equation}
and
\begin{equation} \label{energy4}
\begin{aligned}
\|\sqrt{\rho} \mathbf{u}(t)\|_{L^2}^2 + \|\mathbf{H}(t)\|_{L^2}^2 + 2\int_0^t \left(\mu \|\nabla \mathbf{u}\|_{L^2}^2 + \nu \|\nabla \mathbf{H}\|_{L^2}^2 \right) ds \le C_0.
\end{aligned}
\end{equation}
\end{lemma}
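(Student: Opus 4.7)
The plan is to dispatch the two assertions separately, both via essentially standard tools. For the density bound \eqref{rhobdd}, I would observe that $\rho$ satisfies the transport equation $\partial_t\rho + \mathbf{u}\cdot\nabla\rho = 0$ with a divergence-free advecting field $\mathbf{u}$ whose regularity in Definition \ref{def} is comfortably within the reach of the DiPerna–Lions renormalization theory. Invoking \cite[Theorem 2.1]{lions} gives conservation of every $L^p$ norm of the density along the flow, and the $p=\infty$ endpoint yields $\|\rho(t)\|_{L^\infty} = \|\rho_0\|_{L^\infty}$.

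For the basic energy estimate \eqref{energy4}, I would recycle the identity \eqref{energy_diff} already derived in Section 3, namely
$$\frac{1}{2}\frac{d}{dt}\int\bigl(\rho|\mathbf{u}|^2 + |\mathbf{H}|^2\bigr)\,dx + \int\bigl(\mu|\nabla\mathbf{u}|^2 + \nu|\nabla\mathbf{H}|^2\bigr)\,dx = 0.$$
This is obtained by testing $\eqref{MHD}_2$ with $\mathbf{u}$, testing $\eqref{MHD_1}_4$ with $\mathbf{H}$, and adding; the deformation-tensor term collapses to $\mu\|\nabla\mathbf{u}\|_{L^2}^2$ exactly as in \eqref{11}. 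The essential cancellation is that the Lorentz-force contribution $\int (\mathbf{H}\cdot\nabla)\mathbf{H}\cdot\mathbf{u}\,dx$ from the momentum equation and the magnetic stretching term $\int (\mathbf{H}\cdot\nabla)\mathbf{u}\cdot\mathbf{H}\,dx$ from the induction equation sum to $\int H^j\partial_j(H^i u^i)\,dx$, which vanishes upon integrating by parts and invoking $\mathrm{div}\,\mathbf{H}=0$; simultaneously the convection contribution $\int(\mathbf{u}\cdot\nabla)\mathbf{H}\cdot\mathbf{H}\,dx = \tfrac{1}{2}\int \mathbf{u}\cdot\nabla|\mathbf{H}|^2\,dx$ disappears by $\mathrm{div}\,\mathbf{u}=0$.

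Integrating this identity over $[0,t]$ yields \eqref{energy4}, in fact with equality rather than inequality. Given the strong regularity in Definition \ref{def}, each integration by parts in the energy derivation is rigorous, so no approximation scheme is required. The only step with any subtlety is the first one: one must verify that the velocity field at hand really lies in a class where \cite[Theorem 2.1]{lions} applies and that the resulting $L^p$ conservation is genuinely valid up to the endpoint $p=\infty$; beyond this bookkeeping, there is no genuine obstacle.
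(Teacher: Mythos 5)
Your proposal is correct and follows the same route as the paper: the density bound comes from the transport theory of \cite[Theorem 2.1]{lions} applied to $\eqref{MHD_1}_1$ with divergence-free $\mathbf{u}$, and the energy estimate comes from integrating the identity \eqref{energy_diff} (derived by testing the momentum and induction equations with $\mathbf{u}$ and $\mathbf{H}$, with exactly the cancellations you describe) over $[0,t]$. The details you supply on the mutual cancellation of the Lorentz-force and magnetic-stretching terms, and on the collapse of $2\mu\|\mathfrak{D}(\mathbf{u})\|_{L^2}^2$ to $\mu\|\nabla\mathbf{u}\|_{L^2}^2$, match \eqref{energy_diff} and \eqref{11} in Section 3.
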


\begin{lemma} \label{lem42}
Let $(\rho, \mathbf{u}, \mathbf{H}, \theta)$ be a strong solution to the system \eqref{MHD}-\eqref{boundary} on $(0, T).$ Then for any $t \in (0, T)$, it holds that
\begin{equation} \label{nablauH4}
\begin{aligned}
 \sup_{0 \le s \le t} (\mu\|\nabla \mathbf{u}\|_{L^2}^2 + \nu\|\nabla \mathbf{H}\|_{L^2}^2) & \le 2 \left( \mu\|\nabla \mathbf{u}_0\|_{L^2}^2 + \nu\|\nabla \mathbf{H}_0\|_{L^2}^2 \right) \\
& + C\sqrt{C_0} \sup_{0 \le s\le t} \|\nabla \mathbf{H}\|_{L^2}^3 + 
C C_0 \sup_{0 \le s\le t} (\|\nabla \mathbf{u}\|_{L^2}^4+ \|\nabla \mathbf{H}\|_{L^2}^4),
\end{aligned}
\end{equation}
where (and in what follows) $C$ denotes a generic positive constant depending only on $\mu, \nu$ and $\|\rho_0\|_{L^{\infty}}$.
\end{lemma}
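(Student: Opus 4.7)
The proof will parallel the strategy of Lemma \ref{nablauH}, but with two crucial departures: we must avoid Gronwall's inequality (which would introduce an uncontrolled $\exp$ factor) and instead directly integrate in time, using the basic energy identity \eqref{energy4} as its substitute. The key observation is that \eqref{energy4} supplies
$$\int_0^t \bigl(\|\nabla \mathbf{u}\|_{L^2}^2 + \|\nabla \mathbf{H}\|_{L^2}^2\bigr)\,ds \le C C_0,$$
so after we integrate a differential inequality over $[0,s]$, any term of the form $\int_0^s F(\tau)(\|\nabla \mathbf{u}\|_{L^2}^2 + \|\nabla \mathbf{H}\|_{L^2}^2)\,d\tau$ with $F(\tau)\le C(\|\nabla \mathbf{u}\|_{L^2}^4 + \|\nabla \mathbf{H}\|_{L^2}^4)$ may be bounded by $C C_0 \sup_{0\le\tau\le t}(\|\nabla \mathbf{u}\|_{L^2}^4 + \|\nabla \mathbf{H}\|_{L^2}^4)$, which is precisely the third summand in \eqref{nablauH4}.

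Concretely, I would test $\eqref{MHD_1}_2$ by $\mathbf{u}_t$ and combine it with the $L^2$-identity \eqref{magneto}--\eqref{nablaH} for $\mathbf{H}_t - \nu\Delta \mathbf{H}$, producing the analogue of \eqref{energy2u}:
$$\frac{d}{ds}\!\int\!\bigl(\mu|\nabla \mathbf{u}|^2 + 2\nu|\nabla \mathbf{H}|^2 + 2(\mathbf{H}\cdot\nabla)\mathbf{u}\cdot\mathbf{H}\bigr)\,dx + \!\int\!\bigl(\rho|\mathbf{u}_t|^2 + |\mathbf{H}_t|^2\bigr)dx + c\|\nabla^2 \mathbf{H}\|_{L^2}^2 \le C\,\mathcal{R}(s),$$
where $\mathcal{R}(s)=\int(|\sqrt{\rho}\mathbf{u}\cdot\nabla\mathbf{u}|^2+|\mathbf{u}\cdot\nabla\mathbf{H}|^2+|\mathbf{H}\cdot\nabla\mathbf{u}|^2+|\mathbf{H}\cdot\nabla\mathbf{H}|^2)dx$. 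Invoking the Stokes estimate \eqref{nabla2u} (which depends only on $\mu$ and $\|\rho_0\|_{L^\infty}$) I also absorb $c\|\nabla^2 \mathbf{u}\|_{L^2}^2$ onto the left-hand side.

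Each piece of $\mathcal{R}$ is estimated by Hölder, \eqref{GN1} and $\|\rho\|_{L^\infty}=\|\rho_0\|_{L^\infty}$; for instance
$$\int|\mathbf{u}\cdot\nabla\mathbf{H}|^2\,dx \le \|\mathbf{u}\|_{L^6}^2\|\nabla\mathbf{H}\|_{L^3}^2 \le C\|\nabla\mathbf{u}\|_{L^2}^2\|\nabla\mathbf{H}\|_{L^2}\|\nabla^2\mathbf{H}\|_{L^2} \le \varepsilon\|\nabla^2\mathbf{H}\|_{L^2}^2 + C\|\nabla\mathbf{u}\|_{L^2}^4\|\nabla\mathbf{H}\|_{L^2}^2,$$
and analogously for the other three terms, where the $\sqrt{\rho}\mathbf{u}\cdot\nabla\mathbf{u}$ term produces $C\|\nabla \mathbf{u}\|_{L^2}^4 \cdot \|\nabla\mathbf{u}\|_{L^2}^2$ and the $|\mathbf{H}\cdot\nabla\mathbf{H}|^2$ term produces $C\|\nabla\mathbf{H}\|_{L^2}^4 \cdot \|\nabla\mathbf{H}\|_{L^2}^2$. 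After choosing $\varepsilon$ small enough that the $\nabla^2\mathbf{u},\nabla^2\mathbf{H}$ contributions are absorbed by the left-hand side, I integrate over $[0,s]$; pulling the fourth powers out as a supremum and using \eqref{energy4} on the remaining $\int(\|\nabla\mathbf{u}\|_{L^2}^2+\|\nabla\mathbf{H}\|_{L^2}^2)d\tau \le C C_0$ yields exactly the desired $C C_0 \sup(\|\nabla\mathbf{u}\|_{L^2}^4 + \|\nabla\mathbf{H}\|_{L^2}^4)$.

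The remaining task, which is the main obstacle because it must produce both the factor $2$ on the initial data and the term $C\sqrt{C_0}\sup\|\nabla\mathbf{H}\|_{L^2}^3$, is handling the cross term $2\int(\mathbf{H}\cdot\nabla)\mathbf{u}\cdot\mathbf{H}\,dx$ sitting inside the energy functional at both endpoints. Using $\|\mathbf{H}\|_{L^4}^4\le C\|\mathbf{H}\|_{L^2}\|\nabla\mathbf{H}\|_{L^2}^3$ and \eqref{energy4} (so $\|\mathbf{H}\|_{L^2}\le\sqrt{C_0}$) one gets
\begin{equation*}
\Bigl|2\!\int(\mathbf{H}\cdot\nabla)\mathbf{u}\cdot\mathbf{H}\,dx\Bigr| \le \tfrac{\mu}{2}\|\nabla\mathbf{u}\|_{L^2}^2 + C\|\mathbf{H}\|_{L^4}^4 \le \tfrac{\mu}{2}\|\nabla\mathbf{u}\|_{L^2}^2 + C\sqrt{C_0}\,\|\nabla\mathbf{H}\|_{L^2}^3.
\end{equation*}
Applied at time $s$ this absorbs half of $\mu\|\nabla\mathbf{u}\|_{L^2}^2$ into the left-hand side and leaves $C\sqrt{C_0}\sup\|\nabla\mathbf{H}\|_{L^2}^3$; applied at time $0$ it produces a second copy of $\mu\|\nabla\mathbf{u}_0\|_{L^2}^2$ (giving the factor $2$) together with $C\sqrt{C_0}\|\nabla\mathbf{H}_0\|_{L^2}^3 \le C\sqrt{C_0}\sup\|\nabla\mathbf{H}\|_{L^2}^3$. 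Taking the supremum over $s\in[0,t]$ finally delivers \eqref{nablauH4}.
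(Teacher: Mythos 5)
Your proposal is correct and follows essentially the same route as the paper: test the momentum equation by $\mathbf{u}_t$ and the induction equation by $\mathbf{H}_t$ (the paper's \eqref{eq41}--\eqref{eq43}), absorb $\|\nabla^2\mathbf{u}\|_{L^2}^2$, $\|\nabla^2\mathbf{H}\|_{L^2}^2$ via the Stokes/elliptic estimates \eqref{eq44}--\eqref{eq46}, integrate in time without Gronwall, pull the quartic factors out as suprema against the energy bound \eqref{energy4}, and treat the cross term $2\int(\mathbf{H}\cdot\nabla)\mathbf{u}\cdot\mathbf{H}\,dx$ by $\|\mathbf{H}\|_{L^4}^4\le C\|\mathbf{H}\|_{L^2}\|\nabla\mathbf{H}\|_{L^2}^3$. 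The only deviation is cosmetic bookkeeping at $t=0$: using $\tfrac{\mu}{2}$ in Young's inequality at both endpoints yields coefficients slightly larger than $2$ on the initial data, whereas the paper bounds both endpoint cross terms at once by $4\sup_{0\le s\le t}\int|\mathbf{H}|^2|\nabla\mathbf{u}|\,dx\le\tfrac{\mu}{2}\sup\|\nabla\mathbf{u}\|_{L^2}^2+C\sqrt{C_0}\sup\|\nabla\mathbf{H}\|_{L^2}^3$ to land exactly on the stated constant; this is a harmless adjustment that does not affect the downstream argument.
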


\begin{proof}
Multiplying $\eqref{MHD_1}_2$ by $\mathbf{u}_t$, $\eqref{MHD_1}_4$ multiplied by $\mathbf{H}_t$, and integrating the resulting equality over $\mathbb{R}^3$, we obtain from Cauchy-Schwarz inequality that
\begin{equation} \label{eq41}
\begin{aligned}
&\frac{1}{2}\frac{d}{dt} \int (\mu |\nabla \mathbf{u}|^2 + \nu |\nabla \mathbf{H}|^2 ) dx  + \int (\rho |\mathbf{u}_t|^2 + |\mathbf{H}_t|^2 ) dx \\
= & \int \mathbf{H}\cdot \nabla \mathbf{H} \cdot \mathbf{u}_t dx - \int \rho \mathbf{u}\cdot \nabla \mathbf{u} \cdot \mathbf{u}_t dx + \int (\mathbf{H}\cdot \nabla \mathbf{u}- \mathbf{u}\cdot \nabla \mathbf{H}) \cdot H_t dx \\
= & -\frac{d}{dt} \int \mathbf{H} \cdot \nabla \mathbf{u} \cdot \mathbf{H} dx + \int \mathbf{H}_t \cdot \nabla \mathbf{u} \cdot \mathbf{H} dx  + \int \mathbf{H} \cdot \nabla \mathbf{u} \cdot \mathbf{H}_t  dx \\
& - \int \rho \mathbf{u} \cdot \nabla \mathbf{u} \cdot \mathbf{u}_t dx + \int (\mathbf{H}\cdot \nabla \mathbf{u}- \mathbf{u}\cdot \nabla \mathbf{H}) \cdot \mathbf{H}_t dx \\
\le  & -\frac{d}{dt} \int \mathbf{H} \cdot \nabla \mathbf{u} \cdot \mathbf{H} dx + \frac{1}{2} \int \left(\rho |\mathbf{u}_t|^2 + |\mathbf{H}_t|^2 \right) dx \\
& + C  \int \left( \rho |\mathbf{u}|^2  |\nabla \mathbf{u}|^2 + |\mathbf{H}|^2 |\nabla \mathbf{u}|^2 + |\mathbf{u}|^2 |\nabla \mathbf{H}|^2 \right)dx,
\end{aligned}
\end{equation}
which implies that
\begin{equation} \label{eq42}
\begin{aligned}
&\frac{d}{dt} \int (\mu |\nabla \mathbf{u}|^2 + \nu |\nabla \mathbf{H}|^2 + 2 \mathbf{H}\cdot \nabla \mathbf{u}\cdot \mathbf{H}) dx  + \|\sqrt{\rho}\mathbf{u}_t\|_{L^2}^2  + \|\mathbf{H}_t\|_{L^2} ^2  \\
\le  &  C  \int \left( \rho |\mathbf{u}|^2  |\nabla \mathbf{u}|^2 + |\mathbf{H}|^2 |\nabla \mathbf{u}|^2 + |\mathbf{u}|^2 |\nabla \mathbf{H}|^2 \right)dx.
\end{aligned}
\end{equation}
Integrating \eqref{eq42} with respect to the time variable over $(0, t)$ gives rise to
\begin{equation} \label{eq43}
\begin{aligned}
&\sup_{0 \le s \le t} \left( \mu \|\nabla \mathbf{u}\|_{L^2}^2 + \nu \|\nabla \mathbf{H}\|_{L^2}^2 \right)  + \int_0^t \left( \|\sqrt{\rho}\mathbf{u}_s\|_{L^2}^2  + \|\mathbf{H}_s\|_{L^2} ^2 \right) ds  \\
\le & ( \mu \|\nabla \mathbf{u}_0\|_{L^2}^2 + \nu \|\nabla \mathbf{H}_0 \|_{L^2}^2)  + 4 \sup_{0 \le s \le t} \int |\mathbf{H}|^2 |\nabla \mathbf{u}| dx \\
& +  C  \int_0^t \int \left( \rho |\mathbf{u}|^2  |\nabla \mathbf{u}|^2 + |\mathbf{H}|^2 |\nabla \mathbf{u}|^2 + |\mathbf{u}|^2 |\nabla \mathbf{H}|^2 \right)dx ds.
\end{aligned}
\end{equation}
Recall that $(\mathbf{u}, P)$ satisfies the following Stokes system:
\begin{equation}
\begin{cases}
-\mu \Delta \mathbf{u}+\nabla P=-\rho \mathbf{u}_t -\rho \mathbf{u}\cdot \nabla \mathbf{u} + \mathbf{H}\cdot \nabla \mathbf{H}, & x \in \mathbb{R}^3, \\ \operatorname{div} \mathbf{u} =0, & x \in \mathbb{R}^3, \\ \mathbf{u}(x) \rightarrow 0,  & |x| \rightarrow \infty. 
\end{cases}
\end{equation}
We thus obtain from \eqref{stokesin} that
\begin{equation}  \label{eq44}
\begin{aligned}
\|\nabla^2 \mathbf{u} \|_{L^2}^2 & \le C(\|\rho \mathbf{u}_t\|_{L^2}^2 + \|\rho \mathbf{u}\cdot \nabla \mathbf{u}\|_{L^2}^2 + \|\mathbf{H}\cdot \nabla \mathbf{H}\|_{L^2}^2) \\
& \le C (\|\sqrt{\rho} \mathbf{u}_t\|_{L^2}^2 + \|\sqrt{\rho} \mathbf{u}\cdot \nabla \mathbf{u}\|_{L^2}^2 + \|\mathbf{H}\cdot \nabla \mathbf{H}\|_{L^2}^2).
\end{aligned}
\end{equation}
Applying the classical $L^2$-estimates for elliptic system on $\mathbf{H}$ gives
\begin{equation} \label{eq45}
\begin{aligned}
\|\nabla^2 \mathbf{H}\|_{L^2}^2 & \le C(\|\mathbf{H}_t\|_{L^2}^2 + \|\mathbf{u}\cdot \nabla \mathbf{H}\|_{L^2}^2 + \|\mathbf{H}\cdot \nabla \mathbf{u}\|_{L^2}^2), \\
\end{aligned}
\end{equation}
which together with \eqref{eq44} leads to
\begin{equation} \label{eq46}
\begin{aligned}
 \|\nabla^2 \mathbf{u}\|_{L^2}^2 + \|\nabla^2 \mathbf{H}\|_{L^2}^2 & \le L(\|\sqrt{\rho} \mathbf{u}_t\|_{L^2}^2 + \|\mathbf{H}_t\|_{L^2}^2) \\
& + C( \|\sqrt{\rho} \mathbf{u}\cdot \nabla \mathbf{u}\|_{L^2}^2 + \|\mathbf{H}\cdot \nabla \mathbf{H}\|_{L^2}^2 + \|\mathbf{u}\cdot \nabla \mathbf{H}\|_{L^2}^2 + \|\mathbf{H}\cdot \nabla \mathbf{u}\|_{L^2}^2), \\
\end{aligned}
\end{equation}
for some positive constant $L$ depending only on $\mu, \nu$ and $\|\rho\|_{L^{\infty}}.$ Integrating \eqref{eq46} multiplied by $\frac{1}{2L}$ with respect to time variable over $(0, t)$ and adding the resulting inequality to \eqref{eq43}, we have
\begin{equation} \label{eq47}
\begin{aligned}
&\sup_{0 \le s \le t} \left( \mu \|\nabla \mathbf{u}\|_{L^2}^2 + \nu \|\nabla \mathbf{H}\|_{L^2}^2 \right)  + \frac{1}{2L} \int_0^t \left(\|\nabla^2 \mathbf{u}\|_{L^2}^2  + \|\nabla^2 \mathbf{H}\|_{L^2} ^2 \right) ds \\
& \quad + \frac{1}{2}\int_0^t \left( \|\sqrt{\rho}\mathbf{u}_s\|_{L^2}^2  + \|\mathbf{H}_s\|_{L^2} ^2 \right) ds  \\
& \le  (\mu \|\nabla \mathbf{u}_0\|_{L^2}^2 + \nu \|\nabla \mathbf{H}_0 \|_{L^2}^2)  + 4 \sup_{0 \le s \le t} \int |\mathbf{H}|^2 |\nabla \mathbf{u}| dx \\
& \quad +    \bar{L}  \int_0^t \int \left( \rho |\mathbf{u}|^2  |\nabla \mathbf{u}|^2 + |\mathbf{H}|^2 |\nabla \mathbf{u}|^2 + |\mathbf{u}|^2 |\nabla \mathbf{H}|^2 + |\mathbf{H}|^2 |\nabla \mathbf{H}|^2 \right)dx ds.
\end{aligned}
\end{equation}

By H\"older inequality, Sobolev inequality and \eqref{energy4}, we have
\begin{equation} \label{eq48}
\begin{aligned}
\int |\mathbf{H}|^2 |\nabla \mathbf{u}| dx & \le \|\mathbf{H}\|_{L^4}^2 \|\nabla \mathbf{u}\|_{L^2} \le \|\mathbf{H}\|_{L^2}^{\frac{1}{2}}\|\nabla \mathbf{H}\|_{L^2}^{\frac{3}{2}} \|\nabla \mathbf{u}\|_{L^2} \\
& \le \frac{\mu}{8} \|\nabla \mathbf{u}\|_{L^2}^2 + C \|\mathbf{H}\|_{L^2} \|\nabla \mathbf{H}\|_{L^2}^3 \\
& \le \frac{\mu}{8} \|\nabla \mathbf{u}\|_{L^2}^2 + C \sqrt{C_0} \|\nabla \mathbf{\mathbf{H}}\|_{L^2}^3,
\end{aligned}
\end{equation}
which gives rise to 
\begin{equation} \label{eq49}
4 \sup_{0 \le s \le t} \int |\mathbf{H}|^2 |\nabla \mathbf{u}| dx \le \frac{\mu}{2}\sup_{0 \le s \le t} \|\nabla \mathbf{u}\|_{L^2}^2 + C \sqrt{C_0} \sup_{0 \le s \le t} \|\nabla \mathbf{H}\|_{L^2}^3.
\end{equation}
In a similar way, we have
\begin{equation} \label{eq50}
\begin{aligned}
& \bar{L}  \int \left( \rho |\mathbf{u}|^2  |\nabla \mathbf{u}|^2 + |\mathbf{H}|^2 |\nabla \mathbf{u}|^2 + |\mathbf{u}|^2 |\nabla \mathbf{H}|^2 + |\mathbf{H}|^2 |\nabla \mathbf{H}|^2 \right)dx \\
 \le & \bar{L}\|\rho\|_{L^{\infty}}\|\mathbf{u}\|_{L^6}^2 \|\nabla \mathbf{u}\|_{L^2} \|\nabla \mathbf{u}\|_{L^6} + \bar{L} \|\mathbf{H}\|_{L^6}^2 \|\nabla \mathbf{u}\|_{L^2} \|\nabla u\|_{L^6} \\
& \quad + \bar{L}\|\mathbf{u}\|_{L^6}^2 \|\nabla \mathbf{H}\|_{L^2} \|\nabla \mathbf{H}\|_{L^6} + \bar{L}\|\mathbf{H}\|_{L^6}^2 \|\nabla \mathbf{H}\|_{L^2} \|\nabla H\|_{L^6} \\
 \le & C \|\nabla \mathbf{u}\|_{L^2}^3 \|\nabla^2 \mathbf{u}\|_{L^2} + C  \|\nabla \mathbf{H}\|_{L^2}^2 \|\nabla \mathbf{u}\|_{L^2} \|\nabla^2 \mathbf{u}\|_{L^2} \\
& \quad + C \|\nabla \mathbf{u}\|_{L^2}^2 \|\nabla \mathbf{H}\|_{L^2} \|\nabla^2 \mathbf{H}\|_{L^2} + C \|\nabla \mathbf{H}\|_{L^2}^3 \|\nabla \mathbf{H}\|_{L^6} \\
\le & \frac{1}{4L} (\|\nabla^2 \mathbf{u}\|_{L^2}^2 + \|\nabla^2 \mathbf{H}\|_{L^2}^2)  + C(\|\nabla \mathbf{u}\|_{L^2}^2 + \|\nabla \mathbf{H}\|_{L^2}^2)(\|\nabla \mathbf{u}\|_{L^2}^4 + \|\nabla \mathbf{H}\|_{L^2}^4). \\
\end{aligned}
\end{equation}
Integrating the above inequality \eqref{eq50} with respect to time variable over $(0, t)$ gives
\begin{equation} \label{eq51}
\begin{aligned}
& \bar{L} \int_0^t  \int \left( \rho |\mathbf{u}|^2  |\nabla \mathbf{u}|^2 + |\mathbf{H}|^2 |\nabla \mathbf{u}|^2 + |\mathbf{u}|^2 |\nabla \mathbf{H}|^2 + |\mathbf{H}|^2 |\nabla \mathbf{H}|^2 \right)dx ds\\ 
\le &  \frac{1}{4L} \int_0^t (\|\nabla^2 \mathbf{u}\|_{L^2}^2 + \|\nabla^2 \mathbf{H}\|_{L^2}^2) ds \\
& \quad +  C\sup_{0 \le s \le t} (\|\nabla \mathbf{u}\|_{L^2}^4 + \|\nabla \mathbf{H}\|_{L^2}^4) \int_0^t (\|\nabla \mathbf{u}\|_{L^2}^2 + \|\nabla \mathbf{H}\|_{L^2}^2)ds \\
\le &  \frac{1}{4L} \int_0^t (\|\nabla^2 \mathbf{u}\|_{L^2}^2 + \|\nabla^2 \mathbf{H}\|_{L^2}^2) ds \\
& \quad +  C\sup_{0 \le s \le t} (\|\nabla \mathbf{u}\|_{L^2}^4 + \|\nabla \mathbf{H}\|_{L^2}^4) \int_0^t (\mu \|\nabla \mathbf{u}\|_{L^2}^2 + \nu \|\nabla \mathbf{H}\|_{L^2}^2)ds \\
\le &  \frac{1}{4L} \int_0^t (\|\nabla^2 \mathbf{u}\|_{L^2}^2 + \|\nabla^2 \mathbf{H}\|_{L^2}^2) ds +  C  C_0 \sup_{0 \le s \le t} (\|\nabla \mathbf{u}\|_{L^2}^4 + \|\nabla \mathbf{H}\|_{L^2}^4),
\end{aligned}
\end{equation}
due to \eqref{energy4}.

Substituting \eqref{eq49} and \eqref{eq51} into \eqref{eq47} implies the desired \eqref{nablauH4} and therefore the proof of Lemma \ref{lem42} is completed.
\end{proof}

\begin{lemma} \label{lem3}
Let $(\rho, \mathbf{u}, \mathbf{H}, \theta)$ be a strong solution to the system \eqref{MHD}-\eqref{boundary} on $(0, T)$. Then there exists a positive constant $\varepsilon_0$ depending only on $\mu, \nu$ and $\|\rho_0\|_{L^\infty}$ such that for any $ t\in (0, T),$ it holds that
\begin{equation} \label{eq52}
\sup_{0 \le t \le T} (\mu\|\nabla \mathbf{u}\|_{L^2}^2 + \nu^2 \|\nabla \mathbf{H}\|_{L^2}^2) \le 8 (\mu\|\nabla \mathbf{u}_0\|_{L^2}^2 + \nu \|\nabla \mathbf{H}_0\|_{L^2}^2),
\end{equation}
provided that
\begin{equation} \label{eq53}
(\|\sqrt{\rho_0} \mathbf{u}_0\|_{L^2}^2 + \|\mathbf{H}_0\|_{L^2}^2)(\|\nabla \mathbf{u}_0\|_{L^2}^2 + \|\nabla \mathbf{H}_0\|_{L^2}^2)) \le \varepsilon_0.
\end{equation}
\end{lemma}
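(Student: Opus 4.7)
The plan is a standard continuity (bootstrap) argument starting from the integral inequality provided by Lemma \ref{lem42}. Set
\[
M_0 := \mu\|\nabla \mathbf{u}_0\|_{L^2}^2 + \nu\|\nabla \mathbf{H}_0\|_{L^2}^2,
\qquad
\Phi(t):=\sup_{0\le s\le t}\bigl(\mu\|\nabla \mathbf{u}(s)\|_{L^2}^2+\nu\|\nabla \mathbf{H}(s)\|_{L^2}^2\bigr),
\]
and note that $\|\nabla \mathbf{u}(s)\|_{L^2}^2\le \Phi(t)/\mu$ and $\|\nabla \mathbf{H}(s)\|_{L^2}^2\le \Phi(t)/\nu$ for every $s\in[0,t]$. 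Substituting these bounds into \eqref{nablauH4} yields an inequality of the form
\[
\Phi(t)\;\le\;2M_0+C_1\sqrt{C_0}\,\Phi(t)^{3/2}+C_2\,C_0\,\Phi(t)^{2},
\]
where the positive constants $C_1,C_2$ depend only on $\mu,\nu,\|\rho_0\|_{L^\infty}$.

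Next I would run the continuity argument. Define
\[
T_\star := \sup\bigl\{\,\tau\in[0,T]\;:\;\Phi(s)\le 8M_0 \text{ for all } s\in[0,\tau]\,\bigr\}.
\]
Since $\Phi(0)=M_0\le 8M_0$ and $\Phi$ is continuous in $t$, we have $T_\star>0$. On $[0,T_\star)$ insert the a priori bound $\Phi(t)\le 8M_0$ into the two nonlinear terms above to obtain
\[
\Phi(t)\;\le\;2M_0+C_1\sqrt{C_0}\,(8M_0)^{3/2}+C_2\,C_0\,(8M_0)^{2}
\;=\;2M_0+\bigl(\widetilde C_1\sqrt{C_0 M_0}+\widetilde C_2\,C_0 M_0\bigr)M_0,
\]
with $\widetilde C_1,\widetilde C_2$ again depending only on $\mu,\nu,\|\rho_0\|_{L^\infty}$. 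Since $C_0 M_0\le \max\{\mu,\nu\}\,\varepsilon_0$ by the smallness hypothesis \eqref{eq53}, we may choose $\varepsilon_0=\varepsilon_0(\mu,\nu,\|\rho_0\|_{L^\infty})>0$ so small that
\[
\widetilde C_1\sqrt{C_0 M_0}+\widetilde C_2\,C_0 M_0\;\le\;2,
\]
which forces $\Phi(t)\le 4M_0$ on $[0,T_\star)$. This strict improvement over the defining bound $\Phi\le 8M_0$, together with the continuity of $\Phi$, precludes $T_\star<T$, so $T_\star=T$ and \eqref{eq52} holds on the whole interval.

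The genuine work is in the second paragraph, namely the self-improvement step: we need the nonlinear correction in the Lemma \ref{lem42} inequality to be strictly subordinate to $2M_0$ when we plug in the a priori bound $\Phi\le 8M_0$. The reason this closes is structural: the cubic term carries the factor $\sqrt{C_0}$ and the quartic term carries the factor $C_0$, so both are controlled by the single quantity $C_0 M_0$, and one sees that the smallness assumption is precisely the one needed to defeat both simultaneously. Everything else is formal: the opening paragraph is just substitution of sup-norms into \eqref{nablauH4}, and the closing step is the standard observation that a continuous function which cannot touch its threshold must stay strictly below it.
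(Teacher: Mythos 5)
Your proposal is correct and follows essentially the same route as the paper: both derive the closed inequality $E(t)\le 2M_0+C\sqrt{C_0}\,E^{3/2}+CC_0E^2$ from Lemma \ref{lem42} and close it by a continuity/bootstrap argument using the smallness of $C_0M_0$ guaranteed by \eqref{eq53}. The only differences are cosmetic choices of the bootstrap threshold ($8M_0$ improved to $4M_0$ in your version versus $16M_0$ improved to $8M_0$ in the paper's) and the explicit value of $\varepsilon_0$.
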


\begin{proof}
Define function $E(t)$ as follows
$$E(t) \triangleq \sup_{0 \le s \le t} (\mu\|\nabla \mathbf{u}\|_{L^2}^2 + \nu\|\nabla \mathbf{H}\|_{L^2}^2).$$
In view of the regularity of $\mathbf{u}$ and $\mathbf{H}$ as described in Lemma \ref{local}, it is easy to check that $E(t)$ is a continuous function on $[0, T].$ By \eqref{nablauH4}, there is a positive constant $M$ depending only on $\mu, \nu$ and $\|\rho_0\|_{L^{\infty}}$ such that
\begin{equation} \label{eq54}
\begin{aligned}
E(t) & \le 2 (\mu\|\nabla \mathbf{u}_0\|_{L^2}^2 + \nu \|\nabla \mathbf{H}_0\|_{L^2}^2)  + \sqrt{M}\sqrt{C_0}E^{\frac{3}{2}} (t) + M C_0  E^2(t). 
\end{aligned}
\end{equation}
Now suppose that 
\begin{equation} \label{eq55}
MC_0 (\|\nabla \mathbf{u}_0\|_{L^2}^2 + \|\nabla \mathbf{H}_0\|_{L^2}^2 ) \le \frac{1}{64(\mu+\nu)},
\end{equation}
which implies
\begin{equation} \label{eq545}
\begin{aligned}
MC_0 (\mu \|\nabla \mathbf{u}_0\|_{L^2}^2 + \nu \|\nabla \mathbf{H}_0\|_{L^2}^2 ) & \le MC_0 (\mu+\nu)  (\|\nabla \mathbf{u}_0\|_{L^2}^2 +  \|\nabla \mathbf{H}_0\|_{L^2}^2 ) \\
& \le \frac{1}{64(\mu+\nu)} \times (\mu + \nu) = \frac{1}{64}.
\end{aligned}
\end{equation}
And set
\begin{equation} \label{eq56}
T_* \triangleq \max\{t \in [0, T]: E(s) \le 16 (\mu\|\nabla \mathbf{u}_0\|_{L^2}^2 + \nu\|\nabla \mathbf{H}_0\|_{L^2}^2), \forall s \in (0, t)\}.
\end{equation}
We claim that
$$T_* = T.$$
Otherwise, we have $T_* \in (0, T).$ By continuity of $E(t),$ it follows from \eqref{eq54} and \eqref{eq545} that 
\begin{equation} \label{eq57}
\begin{aligned}
E(T_*) & \le 2(\mu\|\nabla \mathbf{u}_0\|_{L^2}^2 + \nu\|\nabla \mathbf{H}_0\|_{L^2}^2) +  \sqrt{MC_0} \cdot \sqrt{16 (\mu\|\nabla \mathbf{u}_0\|_{L^2}^2 + \nu \|\nabla \mathbf{H}_0\|_{L^2}^2)} E(T_*) \\
& \quad  + M C_0 \cdot 16(\mu\|\nabla \mathbf{u}_0\|_{L^2}^2 + \nu \|\nabla \mathbf{H}_0\|_{L^2}^2) E(T_*) \\
& = 2(\mu\|\nabla \mathbf{u}_0\|_{L^2}^2 + \nu\|\nabla \mathbf{H}_0\|_{L^2}^2) +  \sqrt{16 MC_0 (\mu\|\nabla \mathbf{u}_0\|_{L^2}^2 + \nu \|\nabla \mathbf{H}_0\|_{L^2}^2)} E(T_*) \\
&\quad + 16 M C_0 (\mu\|\nabla \mathbf{u}_0\|_{L^2}^2 + \nu \|\nabla \mathbf{H}_0\|_{L^2}^2) E(T_*) \\
&\le 2(\mu\|\nabla \mathbf{u}_0\|_{L^2}^2 + \nu\|\nabla \mathbf{H}_0\|_{L^2}^2) + \frac{3}{4}E(T_*),
\end{aligned}
\end{equation}
and thus 
$$E(T_*) \le 8 (\mu\|\nabla \mathbf{u}_0\|_{L^2}^2 + \nu\|\nabla \mathbf{H}_0\|_{L^2}^2),$$
which contradicts \eqref{eq56}.

Choosing $\varepsilon_0 = \frac{1}{64M(\mu+\nu)}$, by virtue of the claim we have showed in the above, we derive that
$$ E(t) \le 8(\mu\|\nabla \mathbf{u}_0\|_{L^2}^2 + \nu\|\nabla \mathbf{H}_0\|_{L^2}^2), \quad 0 <t < T, $$
provided that \eqref{eq53} holds true. This gives the desired \eqref{eq52} which consequently completes the proof of Lemma \ref{lem3}.
\end{proof}

Now we are ready to give a proof of Theorem \ref{thm2}.

\noindent {\bf{Proof of Theorem \ref{thm2}}.} \quad Let $\varepsilon_0$ be the constant stated in Lemma \ref{lem3} and suppose the initial data $(\rho_0, \mathbf{u}_0, \mathbf{H}_0, \theta_0)$ satisfies \eqref{RC}, \eqref{CC1}, \eqref{CC2}, and 
\begin{equation}
(\|\sqrt{\rho_0} \mathbf{u}_0\|_{L^2}^2 + \|\mathbf{H}_0\|_{L^2}^2)(\|\nabla \mathbf{u}_0\|_{L^2}^2 + \|\nabla \mathbf{H}_0\|_{L^2}^2)) \le \varepsilon_0.
\end{equation}

According to Lemma \ref{local}, there is a unique strong solution $(\rho, \mathbf{u}, \mathbf{H}, \theta)$ to the system \eqref{MHD}-\eqref{boundary}. Let $T^*$ be the maximal existence time to that solution. We will show that $T^* = \infty.$ Supposing, by contradiction, that $T^* < \infty,$ then by \eqref{bloweq}, we deduce that for any $(s,r)$ with $\frac{2}{s} + \frac{3}{r} \le 1, r >3,$ it holds that
$$\int_0^{T^*} \|\mathbf{u}\|_{L^r_{\omega}}^s dt = \infty,$$
which combined with the inequality $\|\mathbf{u}\|_{L^6_{\omega}}^4 \le \|\mathbf{u}\|_{L^6}^4 \le C \|\nabla \mathbf{u}\|_{L^2}^4$ leads to 
\begin{equation} \label{eq58}
\int_0^{T^*} \|\nabla \mathbf{u}\|_{L^2}^4 dt = \infty.
\end{equation}
By Lemma \ref{lem3}, for any $0<T<T^*$, it holds that
$$\sup_{0 \le t \le T} \|\nabla \mathbf{u}\|_{L^2}^2 \le 8(\mu\|\nabla \mathbf{u}_0\|_{L^2}^2 + \nu\|\nabla \mathbf{H}_0\|_{L^2}^2).$$
This together with \eqref{energy4} gives rise to
\begin{equation*}
\begin{aligned}
\int_0^{T^*} \|\nabla \mathbf{u}\|_{L^2}^4 dt & \le (\sup_{0 \le t \le T^*} \|\nabla \mathbf{u}\|_{L^2} ^2) \int_0^{T^*} \|\nabla \mathbf{u}\|_{L^2}^2 dt \\
& \le 8(\mu\|\nabla \mathbf{u}_0\|_{L^2}^2 + \nu\|\nabla \mathbf{H}_0\|_{L^2}^2) \times (2\mu)^{-1}C_0 < \infty,
\end{aligned}
\end{equation*}
which contradicts \eqref{eq58}. This contradiction implies that $T^* = \infty,$ and thus we obtain the global strong solution. Therefore the proof of Theorem \ref{thm2} is completed.

\end{document}